\def\mathcal{\mathscr}
\newfont{\aaa}{cmb10 at 19pt}
\newfont{\bbb}{cmb10 at 11pt}
\newtheorem{thm}{Theorem}[section]
\newtheorem{lem}[thm]{Lemma}
\newtheorem{pro}[thm]{Proposition}
\newtheorem{cor}[thm]{Corollary}
\theoremstyle{definition}
\newtheorem{defn}[thm]{Definition}
\theoremstyle{remark}
\newtheorem{rem}[thm]{Remark}
\numberwithin{equation}{section}
\def\bint{{\ifinner\rlap{\bf\kern.30em--}
\int\else\rlap{\bf\kern.35em--}\int\fi}\ignorespaces}
\def\sbint{{\ifinner\rlap{\bf\kern.32em--}
\hspace{0.078cm}\int\else\rlap{\bf\kern.45em--}\int\fi}\ignorespaces}
\def\rr{{\mathbb R}}
\def\rn{{\mathbb{R}^n}}
\def\cc{{\mathbb C}}
\def\nn{{\mathbb N}}
\def\zz{{\mathbb Z}}
\def\fz{\infty }
\def\az{\alpha}
\def\bz{\beta}
\def\dz{\delta}
\def\ez{\epsilon}
\def\lz{\lambda}
\def\lf{\left}
\def\r{\right}
\def\ls{\lesssim}
\def\noz{\nonumber}
\def\loc{{\mathrm{loc}}}
\DeclareMathOperator{\supp}{supp}
\def\lz{{\lambda}}
\def\@evenhead{
\vbox{\hbox to \textwidth {}{\hspace{0mm}{\footnotesize
\thepage}}{\hspace{3cm} {\footnotesize
{Hongchao JIA, Jin TAO, Dachun YANG, Wen YUAN, Yangyang ZHANG}}}
\protect\vspace{1truemm}\relax \hrule depth0pt height0.15truemm
width\textwidth}}
\def\@evenfoot{}
\def\@oddhead{\vbox{\hbox to \textwidth
{{\hspace{0cm}{\footnotesize Fractional integrals
on congruent JNC spaces}\hfill{\footnotesize
\thepage}}\hspace{0mm}}{} \protect\vspace{1truemm}\relax\hrule
depth0pt height0.15truemm width\textwidth}}
\def\@oddfoot{}
\begin{document}

\thispagestyle{empty}

\fancypagestyle{firststyle}
{
\renewcommand{\topmargin}{-9mm}
\fancyhead[lO,RE]{\footnotesize Front. Math. China \\
https:/\!/doi.org/10.1007/s11464-021-XXXX-X\\[3mm]
}
\fancyhead[RO,LE]{\scriptsize \bf 
} \fancyfoot[CE,CO]{}}
\renewcommand{\headrulewidth}{0pt}


\setcounter{page}{1}
\qquad\\[5mm]

\thispagestyle{firststyle}

\noindent{\aaa{Boundedness of fractional integrals on\\[2mm]
special John--Nirenberg--Campanato and\\[2mm]
Hardy-type spaces via congruent cubes}}\\[1mm]

\noindent{\bbb Hongchao JIA,\quad Jin TAO,\quad Dachun YANG,\quad
\\ Wen YUAN,\quad Yangyang ZHANG}\\[-1mm]

\noindent\footnotesize{Laboratory of Mathematics and Complex Systems
(Ministry of Education of China),
School of Mathematical Sciences, Beijing Normal University,
Beijing 100875, People's Republic of China}\\[6mm]

\vskip-2mm \noindent{\footnotesize$\copyright$\,Higher Education
Press 2021} \vskip 4mm

\normalsize\noindent{\bbb Abstract}\quad
Let $p\in[1,\infty]$, $q\in[1,\infty)$,
$s\in\mathbb{Z}_+:=\mathbb{N}\cup\{0\}$,
$\alpha\in\mathbb{R}$, and $\beta\in (0,1)$.
In this article, the authors first find a reasonable
version $\widetilde{I}_{\beta}$ of the
(generalized) fractional integral $I_{\beta}$
on the special John--Nirenberg--Campanato space via congruent cubes,
$JN_{(p,q,s)_\alpha}^{\mathrm{con}}(\mathbb{R}^n)$,
which coincides with the Campanato space
$\mathcal{C}_{\alpha,q,s}(\mathbb{R}^n)$ when $p=\infty$.
To this end, the authors introduce
the vanishing moments up to order $s$ of $I_{\beta}$.
Then the authors prove that
$\widetilde{I}_{\beta}$ is bounded
from $JN_{(p,q,s)_\alpha}^{\mathrm{con}}(\mathbb{R}^n)$
to $JN_{(p,q,s)_{\alpha+\beta/n}}^{\mathrm{con}}(\mathbb{R}^n)$
if and only if $I_{\beta}$ has the vanishing moments up to order $s$.
The obtained result is new even when $p=\infty$ and $s\in\mathbb{N}$.
Moreover, the authors show that $I_{\beta}$
can be extended to a unique continuous
linear operator from the Hardy-kind space
$HK_{(p,q,s)_{\alpha+\beta/n}}^{\mathrm{con}}(\mathbb{R}^n)$,
the predual of $JN_{(p',q',s)_{\alpha+\beta/n}}^{\mathrm{con}}(\mathbb{R}^n)$
with $\frac{1}{p}+\frac{1}{p'}=1=\frac{1}{q}+\frac{1}{q'}$, to
$HK_{(p,q,s)_{\alpha}}^{\mathrm{con}}(\mathbb{R}^n)$
if and only if $I_{\beta}$ has the vanishing moments up to order $s$.
The proof of the latter boundedness strongly depends on the dual relation
$(HK_{(p,q,s)_{\alpha}}^{\mathrm{con}}(\mathbb{R}^n))^*
=JN_{(p',q',s)_\alpha}^{\mathrm{con}}(\mathbb{R}^n)$,
the properties of molecules of
$HK_{(p,q,s)_\alpha}^{\mathrm{con}}(\mathbb{R}^n)$,
and a crucial criterion for the boundedness
of linear operators on
$HK_{(p,q,s)_\alpha}^{\mathrm{con}}(\mathbb{R}^n)$.\vspace{0.3cm}

\footnotetext{Received September 20, 2021; accepted December 24, 2021\\
\hspace*{5.8mm}Corresponding author: Dachun YANG, E-mail:
dcyang@bnu.edu.cn}

\noindent{\bbb Keywords}\quad John--Nirenberg space,
Campanato space, Hardy space, fractional integrals,
molecule.\\[1mm]
{\bbb MSC2020}\quad 42B20, 47A30, 42B30, 46E35, 42B25, 42B35\\[0.4cm]

\section{Introduction\label{s1}}

In this article, for any given measurable set $E\subset \rn$ and
for any given $q\in(0,\infty]$,
the \emph{Lebesgue space} $L^q(E)$
is defined to be the set of all measurable
functions $f$ on $E$ such that
$$\|f\|_{L^q(E)}:=
\begin{cases}
\displaystyle
\lf[\int_{E}|f(x)|^q\, dx\r]^{\frac{1}{q}}
&\text{if}\quad q\in(0,\fz),\\
\displaystyle
\mathop{\mathrm{ess\,sup}}_{x\in E}|f(x)| &\text{if}\quad q=\fz
\end{cases}$$
is finite. The \textit{space} $L^q_{\mathrm{loc}}(\rn)$
is defined to be the set of all measurable functions $f$ on $\rn$ such that,
for any given bounded measurable set $F\subset \rn$,
$f\mathbf{1}_F\in L^q(\rn)$,
here and thereafter, we use $\mathbf{1}_F$ to
denote the \emph{characteristic function} of $F$.

Recall that the fractional integral
$\mathcal{I}_{\beta}$, with $\beta\in(0,n)$, is defined by setting,
for any $f\in L^q(\rn)$ with $q\in [1,\frac{n}{\beta})$,
and almost every $x\in \rn$,
\begin{align}\label{cla-I}
\mathcal{I}_{\beta}(f)(x)
:=\int_{\rn}\frac{f(y)}{|x-y|^{n-\beta}}\,dy.
\end{align}
The well-known Hardy--Littlewood--Sobolev theorem
shows that $\mathcal{I}_{\beta}$
is bounded from $L^q(\rn)$ to $L^{\widetilde{q}}(\rn)$
with $q\in(1,\frac{n}{\beta})$ and
$\frac{1}{\widetilde{q}}:=\frac{1}{q}-\frac{\beta}{n}$,
which was first established by Hardy and Littlewood \cite{hl1928}
and Sobolev \cite{s1938}
(see also \cite{EMS1970}), and plays a key role in potential
theory and partial differential equations;
see, for instance, \cite{mv1995, Po1997, R1996}.
From then on, many studies on fractional integrals
have been done.
For instance, Sawyer and Wheeden \cite{sw1992} studied the
weighted inequalities for fractional integrals
on Euclidean and homogeneous spaces. Nakai \cite{N2001}
introduced the generalized fractional integral
and extended the Hardy--Littlewood--Sobolev theorem to the Orlicz space.
Later, Sawano and Shimomura \cite{ss2017}
studied the boundedness of the generalized
fractional integral on generalized Morrey spaces.
Moreover,  Nakai \cite{N10, N17} studied the
boundedness of fractional integrals
on Campanato spaces with variable growth conditions,
and on their predual spaces, namely, some Hardy-type spaces.
Very recently, Ho \cite{Ho2021} studied the Erd\'elyi--Kober
fractional integral operator on ball Banach function spaces.
Chen and Sun \cite{cs2021} studied the
extension of multilinear fractional integrals to
linear operators on mixed-norm Lebesgue spaces.
We refer the reader to \cite{dll2003, hj2015, jzwh2017, Ho2020} for more
studies on fractional integrals on Hardy-type spaces,
and to \cite{an2019,lx2020, mx2017, NS2012, sl2014} on Campanato-type spaces.

In this article, we extend the Hardy--Littlewood--Sobolev theorem
to the following special John--Nirenberg--Campanato space via congruent cubes,
$JN_{(p,q,s)_\alpha}^{\mathrm{con}}(\rn)$, which was
introduced in \cite{jtyyz1} as a John--Nirenberg-type space.
First, recall that John and Nirenberg \cite{JN}
introduced the well-known space $\mathrm{BMO}\,(Q_0)$ and,
in the same article \cite{JN},
as a generalized version of the space $\mathrm{BMO}\,(Q_0)$,
they also studied the following John--Nirenberg space $JN_p(Q_0)$.
In what follows, a \textit{cube} $Q\subset \rn$
always has finite edge length and all its edges parallel to the coordinate
axes and it is not necessary to be closed or open.
Moreover, for any $f\in L_{\loc}^1(\rn)$
and any bounded measurable set $E\subset\rn$ with $|E|>0$, let
$$f_E:=\fint_{E}f(x)\,dx:=\frac{1}{|E|}\int_E f(x)\,dx.$$
\begin{defn}\label{d-jnp}
Let $p\in[1,\infty)$ and $Q_0$ be a given cube of $\rn$.
The \emph{John--Nirenberg space  $JN_p(Q_0)$}
is defined to be the set of all $f\in L^1(Q_0)$ such that
\begin{align*}
\|f\|_{JN_p(Q_0)}:=
\sup\lf\{\sum_{j}\lf|Q_{j}\r|\lf[\fint_{Q_j}
\lf|f(x)-f_{Q_{j}}\r|\,dx\r]^{p}\r\}^{\frac{1}{p}}<\infty,
\end{align*}
where the supremum is taken over all collections
$\{Q_j\}_j$ of interior pairwise disjoint subcubes of $Q_0$.
\end{defn}

Since \cite{JN}, the John--Nirenberg space has attracted a lot of attention.
For instance, Dafni et al. \cite{DHKY} showed the nontriviality of $JN_p(Q_0)$
and introduced a Hardy-kind space $HK_{p'}(Q_0)$ which proves
the predual space of the space $JN_p(Q_0)$, where
$p\in(1,\fz)$ and $\frac{1}{p}+\frac{1}{p'}=1$.
Berkovits et al. \cite{bkm2016} applied the dyadic variant of $JN_p(Q_0)$
in the study of self-improving
properties of some Poincar\'e-type inequalities.
Very recently, Dom\'inguez and Milman \cite{DM} introduced
and studied sparse Brudnyi and John--Nirenberg spaces. We refer the reader to
\cite{ABKY, bkm2016, jyyz2, M, MM, SXY, TYY19, TYY2, tyy3}
for more studies on John--Nirenberg-type spaces.
Although there exist many studies on the John--Nirenberg space
and its predual space, it is still a challenging
and open question to obtain the boundedness
of some important operators, such as
the Hardy--Littlewood maximal operator, the
Calder\'on--Zygmund operator, and the fractional integral,
on these spaces. The main results of this article may
shed some light on this.

In what follows, for any $\ell\in(0,\fz)$, let
$\Pi_\ell(\rn)$ be the class of all collections of
interior pairwise disjoint subcubes
$\{Q_j\}_j$ of $\rn$ with edge length $\ell$;
for any $s\in\zz_+:=\{0,1,2,\ldots\}$,
let $\mathcal{P}_s(\rn)$ denote the set of
all polynomials of total degree not greater than $s$ on $\rn$;
moreover, for any
$\gamma:=(\gamma_1,\ldots,\gamma_n)\in\zz_+^n:=(\zz_+)^n$
and $x:=(x_1,\ldots,x_n)\in\rn$,
let $|\gamma|:=\gamma_1+\cdots+\gamma_n$ and
$x^{\gamma}:=x_1^{\gamma_1}\cdots x_n^{\gamma_n}$.
\begin{defn}\label{Defin.jncc}
Let $p\in[1,\infty]$, $q\in[1,\infty)$, $s\in\zz_+$, and $\alpha\in\rr$.
The \emph{special John--Nirenberg--Campanato space via congruent cubes}
(for short, \textit{congruent} JNC \textit{space}),
$JN_{(p,q,s)_\alpha}^{\mathrm{con}}(\rn)$, is defined to be the set of all
$f\in L^q_{\mathrm{loc}}(\rn)$ such that
\begin{align*}
\|f\|_{JN_{(p,q,s)_\alpha}^{\mathrm{con}}(\rn)}
&:=
\begin{cases}
\displaystyle
\sup_{\substack{\ell\in(0,\fz)\\ \{Q_j\}_j\in\Pi_{\ell}(\rn)}}
\lf[\sum_{j}\lf|Q_{j}\r|\lf\{\lf|Q_{j}
\r|^{-\alpha}\phantom{\lf[\int_{Q_j}\r]^{\frac{1}{q}}}\r.\r.\\
\displaystyle
\quad\lf.\lf.\times\lf[\fint_{Q_{j}}
\lf|f(x)-P_{Q_j}^{(s)}(f)(x)\r|^{q}\,dx
\r]^{\frac{1}{q}}\r\}^{p} \r]^{\frac{1}{p}}
&\text{if}\ p\in[1,\fz), \\
\displaystyle
\sup_{{\rm cube\ }Q\subset \rn}|Q|^{-\alpha}\lf[\fint_{Q}
\lf|f(x)-P_{Q}^{(s)}(f)(x)\r|^{q}\,dx\r]^{\frac{1}{q}}
&\text{if}\ p=\fz
\end{cases}
\end{align*}
is finite, here and thereafter, for any cube (or ball) $Q\subset \rn$,
$P_{Q}^{(s)}(f)$ denotes the unique polynomial
of total degree not greater than $s$ such that,
for any $\gamma\in\zz_+^n$ with $|\gamma|\leq s$,
\begin{align}\label{pq}
\int_{Q}\lf[f(x)-P_{Q}^{(s)}(f)(x)\r]x^{\gamma}\,dx=0.
\end{align}
\end{defn}

The space $JN_{(\fz,q,s)_\alpha}^{\mathrm{con}}(\rn)$
is just the well-known Campanato space
$\mathcal{C}_{\alpha,q,s}(\rn)$ which was introduced
by Campanato \cite{C} and when $\alpha=0$ coincides with the space
$\mathrm{BMO}\,(\rn)$.
Moreover, in article \cite{jtyyz1}, we introduced a
Hardy-kind space $HK_{(p,q,s)_\alpha}^{\mathrm{con}}(\rn)$
which proves the predual of $JN_{(p',q',s)_\alpha}^{\mathrm{con}}(\rn)$
with $p\in(1,\fz)$
(see, for instance, \cite[Theorem 4.10]{jtyyz1}),
here and thereafter, $p'$ and $q'$ denote, respectively,
the conjugate indexes of $p$ and $q$,
namely, $\frac{1}{p}+\frac{1}{p'}=1=\frac{1}{q}+\frac{1}{q'}$.

The main propose of this article is to
extend the Hardy--Littlewood--Sobolev theorem to $JN_{(p,q,s)_\alpha}^{\mathrm{con}}(\rn)$,
the special John--Nirenberg--Campanato space
via congruent cubes,
and the Hardy-kind space
$HK_{(p,q,s)_\alpha}^{\mathrm{con}}(\rn)$, where $p\in[1,\infty]$, $q\in[1,\infty)$,
$s\in\mathbb{Z}_+:=\mathbb{N}\cup\{0\}$, and
$\alpha\in\mathbb{R}$. Let $\bz\in (0,1)$.
We first introduce the (generalized) fractional integral, denoted by $I_{\beta}$,
in Definition \ref{Def-I} below.
Then we find an reasonable version of the fractional integral,
denoted by $\widetilde{I}_{\beta}$, on
$JN_{(p,q,s)_\alpha}^{\mathrm{con}}(\mathbb{R}^n)$.
To this end, we introduce
the vanishing moments up to order $s$ of $I_{\beta}$
in Definition \ref{Def-I-s} below.
Moreover, we prove that $\widetilde{I}_{\beta}$ is bounded
from $JN_{(p,q,s)_\alpha}^{\mathrm{con}}(\mathbb{R}^n)$
to $JN_{(p,q,s)_{\alpha+\beta/n}}^{\mathrm{con}}(\mathbb{R}^n)$
if and only if $I_{\beta}$ has the vanishing moments up to order $s$;
see Theorem \ref{Iw-JN-bound} below. To this end,
we find an equivalent version of
$I_{\beta}$ having the vanishing moments up to order $s$
in Proposition \ref{I-a-x} below.
The obtained result is new even for the Campanato space
$\mathcal{C}_{\alpha,q,s}(\mathbb{R}^n)=JN_{(\fz,q,s)_\alpha}^{\mathrm{con}}(\rn)$ with
$s\in\mathbb{N}:=\{1,2,\ldots\}$.
Furthermore, we show that $I_{\beta}$
can be extended to a unique continuous linear operator
from $HK_{(p,q,s)_{\alpha+\beta/n}}^{\mathrm{con}}(\mathbb{R}^n)$
to $HK_{(p,q,s)_{\alpha}}^{\mathrm{con}}(\mathbb{R}^n)$
if and only if $I_{\beta}$ has the vanishing
moments up to order $s$; see Theorem \ref{I-Bounded-HK} below.
In the proof of Theorem \ref{I-Bounded-HK}, we skillfully use molecules of
$HK_{(p,q,s)_\alpha}^{\mathrm{con}}(\mathbb{R}^n)$
and a criterion for the boundedness of linear operators on
$HK_{(p,q,s)_\alpha}^{\mathrm{con}}(\mathbb{R}^n)$,
obtained in \cite[Theorem 3.16]{jtyyz2}, to overcome the difficulty caused by
the fact that $\|\cdot\|_{HK_{(p,q,s)_{\alpha}}^{\mathrm{con}}(\mathbb{R}^n)}$
is not concave.

The remainder of this article is organized as follows.

In Section \ref{S-I-JN},
we first find a reasonable version $\widetilde{I}_{\beta}$
of the (generalized) fractional integral $I_{\beta}$
on $JN_{(p,q,s)_\alpha}^{\mathrm{con}}(\rn)$
via borrowing some ideas from \cite[Section 5]{N10}.
Moreover, we prove that $\widetilde{I}_{\beta}$ is bounded
from $JN_{(p,q,s)_\alpha}^{\mathrm{con}}(\mathbb{R}^n)$
to $JN_{(p,q,s)_{\alpha+\beta/n}}^{\mathrm{con}}(\mathbb{R}^n)$
if and only if $I_{\beta}$ has the vanishing moments up to order $s$
in Theorem \ref{Iw-JN-bound} below,
which is new even for the
Campanato space $\mathcal{C}_{\alpha,q,s}(\rn)=JN_{(\fz,q,s)_\alpha}^{\mathrm{con}}(\rn)$
with $s\in\nn$.

Section \ref{S-C-Z-HK} consists of two subsections.
In Subsection \ref{Hardy-type}, we first recall
the notions of the Hardy-kind space
$HK_{(p,q,s)_{\alpha}}^{\mathrm{con}}(\rn)$
and the molecule of $HK_{(p,q,s)_{\alpha}}^{\mathrm{con}}(\rn)$.
In Subsection \ref{S-I-HK}, via using the
molecule of $HK_{(p,q,s)_{\alpha+\beta/n}}^{\mathrm{con}}(\rn)$,
the dual relation $$\lf(HK_{(p,q,s)_{\alpha+\beta/n}}^{\mathrm{con}}(\rn)\r)^*
=JN_{(p',q',s)_{\alpha+\beta/n}}^{\mathrm{con}}(\rn),$$
and a criterion for the boundedness of linear operators on
$HK_{(p,q,s)_\alpha}^{\mathrm{con}}(\mathbb{R}^n)$,
obtained in \cite{jtyyz2}, we prove that $I_{\beta}$
can be extended to a unique continuous linear operator
from $HK_{(p,q,s)_{\alpha+\beta/n}}^{\mathrm{con}}(\mathbb{R}^n)$
to $HK_{(p,q,s)_{\alpha}}^{\mathrm{con}}(\mathbb{R}^n)$
if and only if $I_{\beta}$ has the vanishing
moments up to order $s$;
see Theorem \ref{I-Bounded-HK} below.

Finally, we make some conventions on notation. Let
$\nn:=\{1,2,\ldots\}$ and $\zz_+:=\nn\cup\{0\}$.
For any $s\in\zz_+$ and any ball $B\subset \rn$,
we use $\mathcal{P}_s(B)$
[resp., $\mathcal{P}_s(\rn)$] to denote the set of all
polynomials of total degree not greater than $s$ on $B$ (resp., $\rn$).
We always denote by $C$ a \emph{positive constant}
which is independent of the main parameters,
but it may vary from line to line.
The symbol $f\lesssim g$ means that $f\le Cg$.
If $f\lesssim g$ and $g\lesssim f$,
we then write $f\sim g$. If $f\le Cg$ and $g=h$
or $g\le h$, we then write $f\ls g\sim h$
or $f\ls g\ls h$, \emph{rather than} $f\ls g=h$ or $f\ls g\le h$.
We use $\mathbf{1}_E$ to denote the
characteristic function of a measurable $E\subset \rn$,
and $\mathbf{0}$ to denote the \emph{origin} of $\rn$.
For any $x\in\rn$ and $r\in(0,\fz)$,
we denote by $B(x,r):=\{y\in\rn: |y-x|<r\}$
the ball with center $x$ and radius $r$.
Moreover, for any $\lambda\in(0,\infty)$ and any
ball $B:=B(x,r)\subset\rn$ with $x\in\rn$ and
$r\in(0,\fz)$, let $\lambda B:=B(x,\lambda r)$.
We use $Q_z(r)$ to denote the cube with center
$z\in\rn$ and edge length $r\in(0,\fz)$. Finally, for any $p\in[1,\infty]$,
we denote by $p'$ its \emph{conjugate index},
namely, $\frac{1}{p}+\frac{1}{p'}=1$.

\noindent\\[4mm]

\section{Boundedness of fractional integrals
on $JN_{(p,q,s)_\alpha}^{\mathrm{con}}(\rn)$\label{S-I-JN}}

In this section,
we first introduce the (generalized) fractional integral $I_{\beta}$
and then find a reasonable version $\widetilde{I}_{\beta}$
of  $I_{\beta}$ on
$JN_{(p,q,s)_\alpha}^{\mathrm{con}}(\rn)$.
To this end, we give the notion of the
vanishing moments up to order $s$ in Definition \ref{Def-I-s} below.
Moreover, we prove that
$\widetilde{I}_{\beta}$ is bounded
from $JN_{(p,q,s)_\alpha}^{\mathrm{con}}(\rn)$
to $JN_{(p,q,s)_{\alpha+\beta/n}}^{\mathrm{con}}(\rn)$
if and only if $I_{\beta}$ has vanishing moments up to order $s$.

We begin with the  notion of the (generalized) fractional
integral $I_{\beta}$. In what follows,
for any $\gamma=(\gamma_1,\ldots,\gamma_n)\in \zz_+^n$,
any $\gamma$-order differentiable function $G$
on $\rn\times \rn$, and any $(x,y)\in \rn\times \rn$, let
$$
\partial_{(1)}^{\gamma}G(x,y):=\frac{\partial^{|\gamma|}}
{\partial x_1^{\gamma_1}\cdots\partial x_n^{\gamma_n}}G(x,y)
\quad\text{and}\quad\partial_{(2)}^{\gamma}G(x,y):=\frac{\partial^{|\gamma|}}
{\partial y_1^{\gamma_1}\cdots\partial y_n^{\gamma_n}}G(x,y).
$$
\begin{defn}\label{I-k}
Let $s\in\zz_+$, $\dz\in(0,1]$, and $\beta\in(0,n)$.
A measurable function $k_{\beta}$
on $\rn\times \rn\setminus\{(x,x):\ x\in\rn\}$ is called an
\textit{$s$-order fractional kernel with regularity $\dz$} if
there exists a positive constant $C$ such that
\begin{itemize}
\item[\rm (i)]
for any $\gamma\in\zz_+^n$ with $|\gamma|\leq s$,
and any $x$, $y\in\rn$ with $x\neq y$,
\begin{align}\label{size-i'}
\lf|\partial_{(2)}^{\gamma}k_{\beta}(x,y)\r|
\le C\frac{1}{|x-y|^{n+|\gamma|-\beta}};
\end{align}

\item[\rm (ii)]
for any $\gamma\in\zz_+^n$ with $|\gamma|\leq s$,
and any $x$, $y$, $z\in\rn$ with $x\neq y$ and $|x-y|\ge2|y-z|$,
\begin{align}\label{regular-i'}
\lf|\partial_{(2)}^{\gamma}k_{\beta}(x,y)
-\partial_{(2)}^{\gamma}k_{\beta}(x,z)\r|
\le C\frac{|y-z|^\dz}{|x-y|^{n+|\gamma|+\dz-\beta}}.
\end{align}
\end{itemize}
\end{defn}

\begin{defn}\label{Def-I}
Let $s\in\zz_+$, $\dz\in(0,1]$, $\beta\in(0,n)$,
and $k_{\beta}$ be an $s$-order fractional kernel with regularity $\dz$.
The \emph{(generalized) fractional integral $I_{\beta}$
with kernel $k_{\beta}$} is defined by setting,
for any suitable function $f$ on $\rn$,
and almost every $x\in\rn$,
\begin{align}\label{2.2x}
I_{\beta}(f)(x)
:=\int_{\rn}k_{\beta}(x,y)f(y)\,dy.
\end{align}
\end{defn}

\begin{rem}\label{rem-I'}
In Definition \ref{Def-I}, if $\dz:=1$, $\beta\in(0,n)$,
and $k_{\beta}:=\frac{1}{|x-y|^{n-\beta}}$,
then it is easy to show that $k_{\beta}$
satisfies \eqref{size-i'} and \eqref{regular-i'},
and hence $I_\beta$  in  this case coincides with
$\mathcal{I}_{\beta}$ in \eqref{cla-I}.
\end{rem}

In what follows, for any measurable function $f$ on $\rn$, we define
its \textit{support} $\supp\,(f)$ by setting
$$\supp\,(f):=\{x\in\rn:\ f(x)\neq0\}.$$
Inspired by \cite[Definition 9.4]{Bo2003},
we give the following notion.

\begin{defn}\label{Def-I-s}
Let $s\in\zz_+$, $\dz\in(0,1]$, $\beta\in(0,n)$,
and $k_{\beta}$ be an $s$-order fractional kernel with regularity $\dz$
as in Definition \ref{I-k}.
The fractional integral $I_{\beta}$ with kernel $k_{\beta}$
is said to have the \emph{vanishing moments up to
order $s$} if, for any $a\in L^2(\rn)$ having bounded support
and satisfying that, for any $\gamma\in\zz_+^n$ with $|\gamma|\leq s$,
$\int_{\rn} a(x)x^\gamma\,dx=0$, it holds true that
\begin{align}\label{I-x-gam}
\int_{\rn} I_{\beta}(a)(x)x^\gamma\,dx=0.
\end{align}
\end{defn}

\begin{rem}\label{Def-I'}
It was proved in \cite[p.\,104]{TW1980}
(see also \cite[p.\,107]{L} and \cite[Lemma 3.2]{ans2021}) that, if $\bz\in (0,1)$,
then $\mathcal{I}_{\beta}$ in \eqref{cla-I} has the vanishing moments up to
order $s$ for any $s\in\zz_+$,
which shows that \eqref{I-x-gam} is a reasonable assumption
when $\bz\in (0,\dz)$ and $\dz\in (0,1]$. However, when $\beta\in[1,n)$, 
it is still unknown whether or not $\mathcal{I}_{\beta}$ in \eqref{cla-I} 
has the vanishing moments up to order $s$.
\end{rem}

We now show that $I_\beta$ in Definition \ref{Def-I-s}
induces a well-defined operator $\widetilde{I}_{\beta,B_0}$
on $JN_{(p,q,s)_\alpha}^{\mathrm{con}}(\rn)$ via borrowing some ideas
from \cite[Section 5]{N10}.

\begin{defn}\label{kw}
Let $s\in\zz_+$, $\dz\in(0,1]$, $\beta\in(0,n)$,
and $k_{\beta}$ be an $s$-order fractional
kernel with regularity $\dz$.
The \textit{adjoint kernel} $\widetilde{k}_{\beta}$ of $k_{\beta}$
is defined by setting,
for any $x$, $y\in \rn$ with $x\neq y$,
\begin{align}\label{k-w}
\widetilde{k}_{\beta}(x,y):=k_{\beta}(y,x).
\end{align}
Moreover, the operator in \eqref{2.2x} with
$k_{\beta}$ replaced by $\widetilde{k}_{\beta}$ is also
called the (generalized) fractional integral.
\end{defn}

\begin{rem}\label{rem-kw}
In Definition \ref{kw},
by \eqref{size-i'} and \eqref{regular-i'},
we find that there exists a positive constant $C$ such that
\begin{itemize}
\item[\rm (i)]
for any $\gamma\in\zz_+^n$ with $|\gamma|\leq s$,
and any $x$, $y\in\rn$ with $x\neq y$,
\begin{align}\label{size-i}
\lf|\partial_{(1)}^{\gamma}\widetilde{k}_{\beta}(x,y)\r|
\le C\frac{1}{|x-y|^{n+|\gamma|-\beta}};
\end{align}

\item[\rm (ii)]
for any $\gamma\in\zz_+^n$ with $|\gamma|\leq s$,
and any $x$, $y$, $\omega\in\rn$
with $x\neq y$ and $|x-y|\ge2|x-\omega|$,
\begin{align}\label{regular-i}
\lf|\partial_{(1)}^{\gamma}\widetilde{k}_{\beta}(x,y)
-\partial_{(1)}^{\gamma}\widetilde{k}_{\beta}(\omega,y)\r|
\le C\frac{|x-\omega|^\dz}{|x-y|^{n+|\gamma|+\dz-\beta}}.
\end{align}
\end{itemize}
Moreover, let $s=0$. Then $\widetilde{k}_{\beta}$ coincides
with the fractional kernel introduced in
\cite[Definition 4.1]{JA2005} over $\rn$.
\end{rem}

\begin{defn}\label{I-w}
Let $s\in\zz_+$, $\dz\in(0,1]$, $\beta\in(0,n)$, and $k_{\beta}$ be an
$s$-order fractional kernel with regularity $\dz$.
Let $\widetilde{k}_{\beta}$ be the adjoint kernel of $k_{\beta}$ as in \eqref{k-w},
and $B_0:=B(x_0,r_0)$ a given ball of $\rn$
with $x_0\in\rn$ and $r_0\in (0,\fz)$.
The \emph{modified fractional integral
$\widetilde{I}_{\beta,B_0}$ with kernel $\widetilde{k}_{\beta}$}
is defined by setting,
for any suitable function $f$
on $\rn$, and almost every $x\in \rn$,
\begin{align}\label{D-Iw}
&\widetilde{I}_{\beta,B_0}(f)(x)\noz\\
&\quad:=\int_{\rn}\lf[\widetilde{k}_{\beta}(x,y)
-\sum_{\{\gamma\in\zz_+^n:\ |\gamma|\leq s\}}
\frac{\partial_{(1)}^{\gamma}\widetilde{k}_{\beta}
(x_0,y)}{\gamma!}(x-x_0)^{\gamma}
\mathbf{1}_{\rn\setminus B_0}(y)\r]f(y)\,dy.
\end{align}
\end{defn}

\begin{rem}	
In Definition \ref{I-w}, let
$s=0$, $\dz=1$, $\beta\in(0,1)$, and
$I_\beta$ be the fractional integral with kernel $k_{\beta}$.
Then $I_\beta$ is just $\mathcal{I}_{\beta}$
as in \eqref{cla-I}, and $\widetilde{I}_{\beta,B_0}$
coincides with \cite[(5.1)]{N10} which was
defined on Campanato spaces over spaces of homogeneous
type with variable growth conditions.
\end{rem}

Next, we show that $\widetilde{I}_{\beta,B_0}$
is well defined on $L^q(\rn)$ with $q\in[1,\fz)$, and
coincides with the fractional integral $\widetilde{I}_{\beta}$,
with kernel $\widetilde{k}_{\beta}$, which is
defined by setting, for any $f\in L^q(\rn)$ with $q\in[1,\frac{n}{\beta})$, and almost every $x\in\rn$,
$$
\widetilde{I}_{\beta}(f)(x)
:=\int_{\rn}\widetilde{k}_{\beta}(x,y)f(y)\,dy,
$$
in the sense of modulo $\mathcal{P}_s(\rn)$. To this end, we need
the following two technical lemmas. The first lemma was
stated in \cite[p.\,250]{JA2005},
which can be easily obtained
by \eqref{size-i'} [or \eqref{size-i}] with $\gamma:=\mathbf{0}$,
and \cite[p.\,119, Theorem 1]{EMS1970}.

\begin{lem}\label{fractional}
Let $s\in\zz_+$, $\dz\in(0,1]$, $\beta\in(0,n)$,
$k_{\beta}$ be an $s$-order fractional kernel with regularity $\dz$,
$\widetilde{k}_{\beta}$ as in \eqref{k-w}, and
$I_\beta$ the fractional integral  with kernel $k_{\beta}$
(or $\widetilde{k}_{\beta}$).
Let $b\in[1,\frac{n}{\beta})$ and $\widetilde{b}\in(1,\infty)$ with
$\frac{1}{\widetilde{b}}=\frac{1}{b}-\frac{\beta}{n}$. Then
\begin{enumerate}
\item[\rm (i)]
for any given $g\in L^b(\rn)$,
$I_\beta(g)(x)$ is well defined for almost every $x\in\rn$;
\item[\rm (ii)]
$I_\beta$ is bounded from $L^b(\rn)$
to $L^{\widetilde{b}}(\rn)$, namely, there exists a
positive constant $C$ such that, for any $g\in L^b(\rn)$,
$$\lf\|I_\beta(g)\r\|_{L^{\widetilde{b}}(\rn)}\leq C\|g\|_{L^b(\rn)}.$$
\end{enumerate}
\end{lem}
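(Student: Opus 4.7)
My plan is to reduce both assertions to the classical Hardy--Littlewood--Sobolev theorem by dominating $I_{\beta}$ pointwise by the classical Riesz potential $\mathcal{I}_{\beta}$ defined in \eqref{cla-I}. The starting observation is that, upon specialising \eqref{size-i'} (or, equivalently, \eqref{size-i}) to $\gamma:=\mathbf{0}$, there exists a positive constant $C$ such that, for all $x$, $y\in\rn$ with $x\neq y$,
\begin{align*}
\lf|k_{\beta}(x,y)\r|\le \f{C}{|x-y|^{n-\beta}},
\end{align*}
and, as already recorded in Remark \ref{rem-kw}, the same bound holds for $\widetilde{k}_{\beta}$. From this, for any nonnegative measurable function $g$ on $\rn$ and any $x\in\rn$,
\begin{align*}
\int_{\rn}\lf|k_{\beta}(x,y)\r|g(y)\,dy\le C\,\mathcal{I}_{\beta}(g)(x).
\end{align*}

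To handle (i), I would fix $g\in L^b(\rn)$ and apply the above with $g$ replaced by $|g|$. Since \cite[p.\,119, Theorem 1]{EMS1970} guarantees $\mathcal{I}_{\beta}(|g|)(x)<\fz$ for almost every $x\in\rn$, the integral defining $I_{\beta}(g)(x)$ must converge absolutely at almost every $x$, giving the well-definedness asserted in (i).

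For (ii), the same pointwise control furnishes, for almost every $x\in\rn$,
\begin{align*}
\lf|I_{\beta}(g)(x)\r|\le C\,\mathcal{I}_{\beta}(|g|)(x),
\end{align*}
so taking the $L^{\widetilde{b}}(\rn)$ norm on both sides and invoking the boundedness of $\mathcal{I}_{\beta}$ from $L^b(\rn)$ to $L^{\widetilde{b}}(\rn)$ supplied by \cite[p.\,119, Theorem 1]{EMS1970} yields the desired norm inequality with a constant independent of $g$. I do not expect any real obstacle here: only the $\gamma=\mathbf{0}$ case of the size bound is used, the regularity estimate \eqref{regular-i'} plays no role, and the argument is essentially a repackaging of the classical Hardy--Littlewood--Sobolev inequality adapted to the kernel class of Definition \ref{I-k}.
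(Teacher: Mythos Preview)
Your proposal is correct and is precisely the argument the paper has in mind: it explicitly says the lemma ``can be easily obtained by \eqref{size-i'} [or \eqref{size-i}] with $\gamma:=\mathbf{0}$, and \cite[p.\,119, Theorem 1]{EMS1970},'' which is exactly your reduction via the pointwise domination $|I_\beta(g)|\le C\,\mathcal{I}_\beta(|g|)$ followed by the classical Hardy--Littlewood--Sobolev theorem.
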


The second lemma shows that, for any bounded
measurable set $E\subset \rn$ and any $f\in L_{\mathrm{loc}}^q(\rn)$
with $q\in(\frac{n}{\beta},\fz)$, $I_\beta(f\mathbf{1}_E)$
is bounded on $\rn$.

\begin{lem}\label{lem2.10x}
Let $s\in\zz_+$, $\dz\in(0,1]$, $\beta\in(0,n)$,
$k_{\beta}$ be an $s$-order fractional kernel with regularity $\dz$,
and $I_{\beta}$ a fractional integral with kernel $k_{\beta}$.
Let $q\in[1,\fz)$ and $E$ be a given bounded
measurable set of $\rn$. Then, for any $f\in L_{\mathrm{loc}}^{q}(\rn)$,
$I_\beta(f\mathbf{1}_E)$
is well defined almost everywhere on $\rn$.
Moreover, if $q\in(\frac{n}{\beta},\fz)$, then
$I_\beta(f\mathbf{1}_E)$ is a bounded function on $\rn$,
and hence $I_\beta(f\mathbf{1}_E)
\in L_{\mathrm{loc}}^{b}(\rn)$ for any given $b\in[1,\fz)$.	
\end{lem}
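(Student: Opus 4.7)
The plan is to handle the two assertions separately, both resting on the size estimate $|k_\beta(x,y)|\le C|x-y|^{\beta-n}$ contained in \eqref{size-i'} with $\gamma=\mathbf{0}$.

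For the well-definedness part, since $E$ is bounded and $f\in L^q_{\mathrm{loc}}(\rn)$, we have $f\mathbf{1}_E\in L^q(\rn)$, and then H\"older's inequality gives $f\mathbf{1}_E\in L^{q_0}(\rn)$ for every $q_0\in[1,q]$. Because $\beta\in(0,n)$ forces $n/\beta>1$, the choice $q_0:=1$ lies in $[1,n/\beta)$, so applying Lemma \ref{fractional}(i) with $b:=1$ and $g:=f\mathbf{1}_E$ immediately yields that $I_\beta(f\mathbf{1}_E)(x)$ is well defined for almost every $x\in\rn$.

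For the boundedness when $q\in(n/\beta,\fz)$, I would estimate $I_\beta(f\mathbf{1}_E)$ pointwise via \eqref{size-i'} and H\"older's inequality by
$$
\lf|I_\beta(f\mathbf{1}_E)(x)\r|\le C\|f\|_{L^q(E)}\lf[\int_E \frac{dy}{|x-y|^{(n-\beta)q'}}\r]^{1/q'},
$$
where the hypothesis $q>n/\beta$ is precisely $(n-\beta)q'<n$. The remaining task is to bound the last integral uniformly in $x\in\rn$. To this end, I would fix a point $y_0\in E$, set $R:=\diam(E)$, and split into two cases: if $|x-y_0|\le 2R$, then $E\subset B(x,3R)$ and the local integrability of $|\cdot|^{-(n-\beta)q'}$ near the origin produces a bound $\lesssim R^{n-(n-\beta)q'}$; if $|x-y_0|>2R$, then $|x-y|\gtrsim |x-y_0|$ for every $y\in E$, producing a bound $\lesssim|E|(2R)^{-(n-\beta)q'}$. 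Either case gives an estimate independent of $x$, so $I_\beta(f\mathbf{1}_E)\in L^\fz(\rn)$.

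The final clause, that $I_\beta(f\mathbf{1}_E)\in L^b_{\mathrm{loc}}(\rn)$ for every $b\in[1,\fz)$, is then immediate, since any essentially bounded function on $\rn$ is locally in every $L^b$. The only step requiring genuine care is the uniform kernel-integral estimate in the boundedness part, and the near/far dichotomy above dispatches it with no further subtlety.
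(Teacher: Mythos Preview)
Your proof is correct. The well-definedness part is identical to the paper's: both note that $f\mathbf{1}_E\in L^1(\rn)$ by H\"older and then invoke Lemma~\ref{fractional}(i).

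For the boundedness part, however, your argument and the paper's diverge. After the common step
\[
\lf|I_\beta(f\mathbf{1}_E)(x)\r|\lesssim\|f\|_{L^q(E)}\lf[\int_E\frac{dy}{|x-y|^{(n-\beta)q'}}\r]^{1/q'},
\]
the paper enlarges $E$ to a ball $B(\mathbf{0},r_0)$ and proves the rearrangement-type inequality
\[
\int_{B(\mathbf{0},r_0)}\frac{dy}{|x-y|^{q'(n-\beta)}}\le\int_{B(\mathbf{0},r_0)}\frac{dy}{|y|^{q'(n-\beta)}}
\]
for every $x\in\rn$, via a symmetry argument on the symmetric difference $B(\mathbf{0},r_0)\triangle B(x,r_0)$. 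Your near/far dichotomy in $|x-y_0|$ is more elementary and avoids this symmetrization entirely, at the cost of producing two separate bounds rather than a single sharp one. Both approaches yield the same conclusion; the paper's gives the optimal constant in the kernel integral (the integral is maximized at $x=\mathbf{0}$), while yours is quicker and makes no use of the special geometry of balls.
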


\begin{proof}
Let $q$ and $E$ be as in the present lemma.
Also, let $B(\mathbf{0},r_0)$ be a given ball of
$\rn$ containing $E$ with $r_0\in(0,\fz)$.
For any given $f\in L_{\mathrm{loc}}^q(\rn)$,
by the H\"older inequality, we find that $f\mathbf{1}_{E}\in L^1(\rn)$.
From this and Lemma \ref{fractional}(i), we deduce that
$I_\beta(f\mathbf{1}_E)$
is well defined almost everywhere on $\rn$.
Moreover, if $q\in(\frac{n}{\beta},\fz)$, then $q'(n-\beta)\in(0,n)$,
which, together with the fact that $|y|<r_0\leq|x-y|$ for any 
$x\in\rn\setminus\{\mathbf{0}\}$ and
$y\in B(\mathbf{0},r_0)\setminus B(x,r_0)$,
further implies that, for any $x\in\rn$,
\begin{align*}
&\int_{B(\mathbf{0},r_0)}
\frac{1}{|y|^{q'(n-\beta)}}\,dy
-\int_{B(\mathbf{0},r_0)}
\frac{1}{|x-y|^{q'(n-\beta)}}\,dy\\
&\quad=\int_{B(\mathbf{0},r_0)}
\frac{1}{|y|^{q'(n-\beta)}}\,dy
-\int_{B(x,r_0)}
\frac{1}{|y|^{q'(n-\beta)}}\,dy\\
&\quad=\int_{B(\mathbf{0},r_0)\setminus B(x,r_0)}
\frac{1}{|y|^{q'(n-\beta)}}\,dy
-\int_{B(x,r_0)\setminus B(\mathbf{0},r_0)}
\frac{1}{|y|^{q'(n-\beta)}}\,dy\\
&\quad=\int_{B(\mathbf{0},r_0)\setminus B(x,r_0)}
\frac{1}{|y|^{q'(n-\beta)}}\,dy
-\int_{B(-x,r_0)\setminus B(\mathbf{0},r_0)}
\frac{1}{|-y|^{q'(n-\beta)}}\,dy\\
&\quad=\int_{B(\mathbf{0},r_0)\setminus B(x,r_0)}
\lf(\frac{1}{|y|^{q'(n-\beta)}}-\frac{1}{|x-y|^{q'(n-\beta)}}\r)\,dy
\geq0.
\end{align*}
Using this, \eqref{size-i'} with $\gamma:=\mathbf{0}$,
the H\"older inequality, and $q'(n-\beta)\in(0,n)$,
we conclude that, for any $x\in\rn$,
\begin{align*}
\lf|I_\beta(f\mathbf{1}_E)(x)\r|
&\ls\int_{E}\frac{|f(y)|}{|x-y|^{n-\beta}}\,dy
\ls \|f\|_{L^q(E)}\lf[\int_{E}
\frac{1}{|x-y|^{q'(n-\beta)}}\,dy\r]^{\frac{1}{q'}}\\
&\ls \|f\|_{L^q(E)}\lf[\int_{B(\mathbf{0},r_0)}
\frac{1}{|x-y|^{q'(n-\beta)}}\,dy\r]^{\frac{1}{q'}}\\
&\ls \|f\|_{L^q(E)}\lf[\int_{B(\mathbf{0},r_0)}
\frac{1}{|y|^{q'(n-\beta)}}\,dy\r]^{\frac{1}{q'}}
\ls 1.
\end{align*}
This finishes the proof of Lemma \ref{lem2.10x}.
\end{proof}

\begin{pro}\label{prop-2.18x}
Let $s\in\zz_+$, $\dz\in(0,1]$, $\beta\in(0,\dz)$,
and $k_{\beta}$ be an $s$-order fractional kernel with regularity $\dz$.
Let $\widetilde{k}_{\beta}$ be as in \eqref{k-w}
and $B_0:=B(x_0,r_0)$ a given ball of $\rn$
with $x_0\in\rn$ and $r_0\in (0,\fz)$.
Let $\widetilde{I}_{\beta}$ be the fractional integral
with kernel $\widetilde{k}_{\beta}$.
Then, for any $f\in L^q(\rn)$ with $q\in[1,\fz)$,
$\widetilde{I}_{\beta,B_0}(f)$ in \eqref{D-Iw}
is well defined almost everywhere on $\rn$
and, if $q\in[1,\frac{n}{\beta})$, then
$\widetilde{I}_{\beta}(f)-\widetilde{I}_{\beta,B_0}(f)\in \mathcal{P}_s(\rn)$
after changing values on a set of measure zero.	
\end{pro}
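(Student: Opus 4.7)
The plan is to split the integrand defining $\widetilde{I}_{\beta,B_0}(f)(x)$ according to regions for $y$ and verify absolute convergence piece by piece. First I would decompose
$$\widetilde{I}_{\beta,B_0}(f)(x) = \int_{B_0}\widetilde{k}_{\beta}(x,y)f(y)\,dy + \int_{\rn\setminus B_0}R(x,y)f(y)\,dy,$$
where $R(x,y):=\widetilde{k}_{\beta}(x,y)-\sum_{\{\gamma\in\zz_+^n:\,|\gamma|\leq s\}}\frac{\partial_{(1)}^{\gamma}\widetilde{k}_{\beta}(x_0,y)}{\gamma!}(x-x_0)^{\gamma}$. Since $f\mathbf{1}_{B_0}\in L^q(\rn)$, Lemma \ref{lem2.10x} applied to the (generalized) fractional integral with kernel $\widetilde{k}_{\beta}$ shows that the first integral is well defined for almost every $x\in\rn$.

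For the second integral, the central step is to derive from \eqref{size-i} and \eqref{regular-i} a Taylor-type remainder estimate for $R(x,y)$: by writing $R(x,y)$ as the $(s+1)$-st order Taylor remainder of $z\mapsto \widetilde{k}_{\beta}(z,y)$ at $z=x_0$ along the segment $t\mapsto x_0+t(x-x_0)$, controlling all intermediate derivatives through \eqref{size-i}, and extracting the final $|x-x_0|^{\dz}$ factor from the Hölder-type regularity bound \eqref{regular-i}, I expect to prove that, for all $y\in\rn$ with $|x_0-y|\geq 4|x-x_0|$,
$$|R(x,y)|\ls \frac{|x-x_0|^{s+\dz}}{|x_0-y|^{n+s+\dz-\beta}}.$$
Over the subset $\{y\in\rn\setminus B_0 : |x_0-y|\geq 4|x-x_0|\}$, combining this bound with Hölder's inequality and the inequality $s+\dz-\beta>0$ (coming from $\dz>\beta$) yields absolute convergence. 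On the bounded annular complement $\{y\in\rn\setminus B_0 : r_0\leq|x_0-y|<4|x-x_0|\}$, which is empty unless $|x-x_0|>r_0/4$, I would estimate the two terms of $R(x,y)$ separately: the contribution of $\widetilde{k}_{\beta}(x,y)f(y)$ is handled by a further application of Lemma \ref{lem2.10x} to a bounded enlargement of this annulus, while the polynomial correction is integrated using \eqref{size-i} and Hölder. This proves $\widetilde{I}_{\beta,B_0}(f)(x)$ is defined a.e.

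For the second assertion, assume $q\in[1,n/\beta)$. Then Lemma \ref{fractional}(i) gives $\widetilde{I}_\beta(f)(x)=\int_{\rn}\widetilde{k}_{\beta}(x,y)f(y)\,dy$ for a.e.\ $x$, and subtracting the above decomposition yields
$$\widetilde{I}_\beta(f)(x)-\widetilde{I}_{\beta,B_0}(f)(x) = \sum_{\{\gamma\in\zz_+^n:\,|\gamma|\leq s\}}\frac{(x-x_0)^{\gamma}}{\gamma!}\,c_\gamma,$$
where $c_\gamma:=\int_{\rn\setminus B_0}\partial_{(1)}^{\gamma}\widetilde{k}_{\beta}(x_0,y)\,f(y)\,dy$. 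By \eqref{size-i} together with Hölder, $|c_\gamma|\ls \|f\|_{L^q(\rn)}[\int_{\rn\setminus B_0}|x_0-y|^{-q'(n+|\gamma|-\beta)}\,dy]^{1/q'}<\infty$ when $q>1$, since $q<n/\beta$ forces $n+|\gamma|-\beta>n/q'$ for every $|\gamma|\geq 0$; the limiting case $q=1$ is handled by pairing $\|f\|_{L^1(\rn)}$ with the $L^{\infty}(\rn\setminus B_0)$-norm of the kernel, which is finite since $|x_0-y|\geq r_0$. As the right-hand side is a polynomial in $x$ of total degree at most $s$, this establishes $\widetilde{I}_\beta(f)-\widetilde{I}_{\beta,B_0}(f)\in\mathcal{P}_s(\rn)$ after modification on a null set.

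The main obstacle will be the Taylor remainder bound for $R(x,y)$: conditions \eqref{size-i} and \eqref{regular-i} do not directly yield a pointwise estimate of Taylor-remainder form, so one must run an iterated fundamental theorem of calculus along the segment from $x_0$ to $x$, verifying at each step that the domain restriction $|x_0+t(x-x_0)-y|\geq 2t|x-x_0|$ required by \eqref{regular-i} is satisfied, and that $|x_0+t(x-x_0)-y|\sim |x_0-y|$ uniformly in $t\in[0,1]$ whenever $|x_0-y|\geq 4|x-x_0|$. Once this remainder bound is in hand, the rest of the proof reduces to standard absolute-convergence book-keeping.
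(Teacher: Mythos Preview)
Your proposal is correct and follows essentially the same approach as the paper. The paper streamlines your region decomposition by working from the start with the enlarged ball $\widetilde{B}_{(x)}:=B(x_0,2|x-x_0|+2r_0)$, which absorbs your ``bounded annular complement'' into the local piece and leaves only the Taylor-remainder region outside; this avoids the separate annulus bookkeeping. Also, the remainder bound you need is simpler than you suggest: since $\widetilde{k}_{\beta}(\cdot,y)$ has $s$ continuous partial derivatives, the standard Taylor formula with Lagrange (or integral) remainder at order $s$ gives $R(x,y)=\sum_{|\gamma|=s}\frac{\partial_{(1)}^{\gamma}\widetilde{k}_{\beta}(\widetilde{x},y)-\partial_{(1)}^{\gamma}\widetilde{k}_{\beta}(x_0,y)}{\gamma!}(x-x_0)^{\gamma}$ for some $\widetilde{x}$ on the segment, and a single application of \eqref{regular-i} (with the condition $|y-x_0|\geq 2|\widetilde{x}-x_0|$, which holds whenever $|y-x_0|\geq 2|x-x_0|$) yields your estimate directly---no iterated fundamental theorem of calculus is required. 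Your treatment of the second assertion is identical to the paper's.
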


\begin{proof}
Let $s$, $\dz$, $k_{\beta}$, $\widetilde{k}_{\beta}$,
$\widetilde{I}_{\beta}$, and $B_0:=B(x_0,r_0)$
with $x_0\in\rn$ and $r_0\in(0,\fz)$ be as in the present proposition.
We first show that, for any $f\in L^q(\rn)$ with $q\in[1,\fz)$,
$\widetilde{I}_{\beta,B_0}(f)$ exists almost everywhere on $\rn$.
Indeed, for any $x\in\rn$, let $\widetilde{B}_{(x)}:=B(x_0,2|x-x_0|+2r_0)$. Then,
by \eqref{size-i}, the Taylor remainder theorem,
the H\"older inequality,
Lemma \ref{lem2.10x}, and $\beta\in(0,\dz)$, we find that,
for any $f\in L^q(\rn)$ with $q\in[1,\fz)$, and almost every $x\in\rn$,
there exists an $\widetilde{x}\in
\{\theta x+(1-\theta) x_0\in\rn:\ \theta\in(0,1)\}$ such that
\begin{align*}
\lf|\widetilde{I}_{\beta,B_0}(f)(x)\r|
&\leq\lf|\int_{\widetilde{B}_{(x)}}
\widetilde{k}_{\beta}(x,y)f(y)\,dy\r|\\
&\quad+\int_{\widetilde{B}_{(x)}\setminus B_0}\lf|
\sum_{\{\gamma\in\zz_+^n:\ |\gamma|\leq s\}}
\frac{\partial_{(1)}^{\gamma}\widetilde{k}_{\beta}
(x_0,y)}{\gamma!}(x-x_0)^{\gamma}\r||f(y)|\,dy\\
&\quad+\int_{\rn\setminus \widetilde{B}_{(x)}}\lf|\widetilde{k}_{\beta}(x,y)
-\sum_{\{\gamma\in\zz_+^n:\ |\gamma|\leq s\}}
\frac{\partial_{(1)}^{\gamma}\widetilde{k}_{\beta}
(x_0,y)}{\gamma!}(x-x_0)^{\gamma}\r||f(y)|\,dy\\
&\ls\lf|\widetilde{I}_{\beta}(f\mathbf{1}_{\widetilde{B}_{(x)}})(x)\r|
+\sum_{\{\gamma\in\zz_+^n:\ |\gamma|\leq s\}}
\int_{\widetilde{B}_{(x)}\setminus B_0}\frac{|x-x_0|^{|\gamma|}|f(y)|}
{|x_0-y|^{n+|\gamma|-\beta}}\,dy\\
&\quad+\int_{\rn\setminus \widetilde{B}_{(x)}}\lf|
\sum_{\{\gamma\in\zz_+^n:\ |\gamma|= s\}}
\frac{\partial_{(1)}^{\gamma}\widetilde{k}_{\beta}(\widetilde{x},y)
-\partial_{(1)}^{\gamma}\widetilde{k}_{\beta}
(x_0,y)}{\gamma!}(x-x_0)^{\gamma}\r||f(y)|\,dy\\
&\ls\lf|\widetilde{I}_{\beta}(f\mathbf{1}_{\widetilde{B}_{(x)}})(x)\r|
+\sum_{\{\gamma\in\zz_+^n:\ |\gamma|\leq s\}}
|x-x_0|^{|\gamma|}
r_0^{-(n+|\gamma|-\beta)}
\|f\|_{L^1(\widetilde{B}_{(x)}\setminus B_0)}\\
&\quad+|\widetilde{x}-x_0|^\dz|x-x_0|^{s}\int_{\rn\setminus \widetilde{B}_{(x)}}
\frac{|f(y)|}{|x_0-y|^{n+s+\dz-\beta}}\,dy\\
&\quad\ls\lf|\widetilde{I}_{\beta}(f\mathbf{1}_{\widetilde{B}_{(x)}})(x)\r|+
\sum_{\{\gamma\in\zz_+^n:\ |\gamma|\leq s\}}|x-x_0|^{|\gamma|}
r_0^{-(n+|\gamma|-\beta)}
\|f\|_{L^1(\widetilde{B}_{(x)}\setminus B_0)}\\
&\quad+|\widetilde{x}-x_0|^\dz|x-x_0|^{s}
\lf\|\frac{1}{|x_0-\cdot|^{n+s+\dz-\beta}}
\r\|_{L^{q'}(\rn\setminus \widetilde{B}_{(x)})}\|f\|_{L^q(\rn)}\\
&<\fz,
\end{align*}
where, in the third step, we used \eqref{regular-i}
together with $|y-x_0|> 2|\widetilde{x}-x_0|$
for any $y\in \rn\setminus \widetilde{B}_{(x)}$.
This implies that
$\widetilde{I}_{\beta,B_0}(f)$ is well defined almost everywhere on $\rn$.

Moreover, using \eqref{size-i} and the H\"older inequality, we find that,
for any $f\in L^q(\rn)$ with $q\in[1,\frac{n}{\beta})$,
and $\gamma\in\zz_+^n$ with $|\gamma|\leq s$,
\begin{align*}
\int_{\rn\setminus B_0}
\lf|\partial_{(1)}^{\gamma}\widetilde{k}_{\beta}(x_0,y)f(y)\r|\,dy
&\ls\int_{\rn\setminus B_0}\frac{|f(y)|}{|x_0-y|^{n+|\gamma|-\beta}}\,dy\\
&\ls\|f\|_{L^q(\rn)}
\lf\|\frac{1}{|x_0-\cdot|^{n+|\gamma|-\beta}}
\r\|_{L^{q'}(\rn\setminus B_0)}
<\fz,
\end{align*}
where, in the last step, we used $q'(n+|\gamma|-\beta)\in(n,\fz]$.
This shows that, for any $f\in L^q(\rn)$
with $q\in[1,\frac{n}{\beta})$, and almost every $x\in\rn$,
\begin{align*}
&\widetilde{I}_{\beta}(f)(x)-\widetilde{I}_{\beta,B_0}(f)(x)\\
&\quad=\int_{\rn}
\lf[\sum_{\{\gamma\in\zz_+^n:\ |\gamma|\leq s\}}
\frac{\partial_{(1)}^{\gamma}
\widetilde{k}_{\beta}(x_0,y)}{\gamma!}(x-x_0)^\gamma
\mathbf{1}_{\rn\setminus B_0}(y)\r]f(y)\,dy\\
&\quad=\sum_{\{\gamma\in\zz_+^n:\ |\gamma|\leq s\}}
\frac{(x-x_0)^\gamma}{\gamma!}\int_{\rn\setminus B_0}
\partial_{(1)}^{\gamma}\widetilde{k}_{\beta}(x_0,y)f(y)\,dy,
\end{align*}
which further implies that
$\widetilde{I}_{\beta}(f)(x)-\widetilde{I}_{\beta,B_0}(f)(x)
\in \mathcal{P}_s(\rn)$
after changing values on a set of measure zero.
This finishes the proof of Proposition \ref{prop-2.18x}.
\end{proof}

Now, we show that
$\widetilde{I}_{\beta,B_0}$ in Definition \ref{I-w} is well defined
on $JN_{(p,q,s)_\alpha}^{\mathrm{con}}(\rn)$.
To this end, we first give several technical lemmas.
The following Lemmas \ref{int-B-P} and \ref{I-JN} are just, respectively,
\cite[Lemmas 2.14 and 2.21]{jtyyz2}, which play important roles
in the proofs of the main results of this article.

\begin{lem}\label{int-B-P}
Let $s\in\zz_+$ and $f$ be a measurable function on $\rn\times\rn$ such that, for any $y\in \rn$,
$f(\cdot,y)\in\mathcal{P}_s(\rn)$.
If, for almost every $x\in \rn$ (resp., $x\in B$),
\begin{align*}
P(x):=\int_{\rn}f(x,y)\,dy
\end{align*}
is finite, then $P\in \mathcal{P}_s(\rn)$ [resp., $P\in \mathcal{P}_s(B)$]
after changing values on a set of measure zero.
\end{lem}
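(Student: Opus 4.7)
The plan is to exploit that $f(\cdot,y)$ is a polynomial of degree at most $s$ for every $y$, and then use a unisolvent set of evaluation points to recover the polynomial coefficients as measurable, integrable functions of $y$.

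First, for each $y\in\rn$ write
\begin{align*}
f(x,y)=\sum_{\{\gamma\in\zz_+^n:\ |\gamma|\le s\}}c_\gamma(y)\,x^{\gamma},
\end{align*}
where the coefficients $c_\gamma(y)$ are uniquely determined. The first task is to prove that each $c_\gamma$ is measurable in $y$ and that, modulo a suitable choice of test points, $c_\gamma\in L^1(\rn)$. Let $N:=\dim\mathcal{P}_s(\rn)=\binom{n+s}{s}$ and let $E$ denote the set of full measure on which $P(x)$ is finite (so that $f(x,\cdot)\in L^1(\rn)$ for every $x\in E$). I would choose $N$ points $x_1,\ldots,x_N\in E$ for which the generalized Vandermonde matrix $M:=(x_i^{\gamma})_{i,\gamma}$ is nonsingular; such a choice is possible because the set of $N$-tuples making $\det M=0$ has Lebesgue measure zero in $(\rn)^N$, while $E^N$ has full measure. (For the "resp." statement with a ball $B$, pick the $x_i$'s from $E\cap B$, which has positive measure in $B$, by the same avoidance argument.)

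Solving the linear system $f(x_i,y)=\sum_{|\gamma|\le s}c_\gamma(y)x_i^{\gamma}$ by inverting $M$, each $c_\gamma(y)$ is a finite linear combination
\begin{align*}
c_\gamma(y)=\sum_{i=1}^{N}a_{i,\gamma}f(x_i,y),
\end{align*}
where $(a_{i,\gamma})$ are the entries of $M^{-1}$. Since each $f(x_i,\cdot)$ is measurable (as a section of the measurable function $f$) and lies in $L^1(\rn)$ (because $P(x_i)$ is finite), the same holds for $c_\gamma$. Define
\begin{align*}
\widetilde{P}(x):=\sum_{\{\gamma\in\zz_+^n:\ |\gamma|\le s\}}\left(\int_{\rn}c_\gamma(y)\,dy\right)x^{\gamma}\in\mathcal{P}_s(\rn).
\end{align*}

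Finally, for every $x$ in the full-measure set $E$, since $f(x,\cdot)=\sum_\gamma c_\gamma(\cdot)x^\gamma$ is a finite sum of integrable functions, linearity of the Lebesgue integral yields
\begin{align*}
P(x)=\int_{\rn}f(x,y)\,dy=\sum_{\{\gamma\in\zz_+^n:\ |\gamma|\le s\}}x^{\gamma}\int_{\rn}c_\gamma(y)\,dy=\widetilde{P}(x),
\end{align*}
so $P$ equals $\widetilde{P}\in\mathcal{P}_s(\rn)$ after altering it on the null set $\rn\setminus E$. The ball version is identical with $\rn$ replaced by $B$ throughout. The main technical point is not any hard estimate but rather the measure-theoretic selection of $N$ points inside the a.e. set $E$ along which the Vandermonde matrix is invertible; once this is done, everything else reduces to linear algebra and elementary linearity of the integral.
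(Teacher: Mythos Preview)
Your argument is correct. The paper itself does not prove this lemma; it simply quotes it from \cite[Lemma~2.14]{jtyyz2}, so there is no in-paper proof to compare against. Your unisolvent-points approach is the natural one and is essentially how such statements are proved: express the coefficients $c_\gamma(y)$ as fixed linear combinations of the values $f(x_i,y)$ at $N=\dim\mathcal{P}_s(\rn)$ well-chosen nodes, inherit measurability and integrability from the sections $f(x_i,\cdot)$, and integrate term by term.

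One small point worth tightening: you write that ``$f(x_i,\cdot)$ is measurable (as a section of the measurable function $f$)'', but joint measurability only guarantees that sections are measurable for \emph{almost every} $x$, not every $x$. The fix is immediate: replace $E$ by its intersection with the full-measure set of $x$ for which $y\mapsto f(x,y)$ is measurable, and then run your Vandermonde avoidance argument inside this smaller (still full-measure) set. With that adjustment the proof is complete, and the ball version goes through exactly as you indicate by choosing the nodes from $E\cap B$.
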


\begin{lem}\label{I-JN}
Let $p\in[1,\fz]$, $q\in(1,\fz)$, $s\in\zz_+$, $\lambda\in(s,\fz)$,
and $\alpha\in (-\fz,\frac{1}{p}+\frac{\lz}{n})$.
Then there exists a positive constant $C$ such that,
for any $f\in JN_{(p,q,s)_\alpha}^{\mathrm{con}}(\rn)$ and
any ball $B(x,r)$ with $x\in\rn$ and $r\in(0,\fz)$,
\begin{align}\label{JN-I-E}
&\int_{\rn\setminus B(x,r)}\frac{|f(y)
-P_{B(x,r)}^{(s)}(f)(y)|}{|x-y|^{n+\lz}}\,dy\noz\\
&\quad\leq C\sum_{k\in\nn}\lf(2^kr\r)^{-\lz}
\lf[\fint_{2^{k}B(x,r)}\lf|f(y)
-P_{2^{k}B(x,r)}^{(s)}(f)(y)\r|^q\,dy\r]^{\frac{1}{q}}
\end{align}
and hence
\begin{align*}
\int_{\rn\setminus B(x,r)}\frac{|f(y)
-P_{B(x,r)}^{(s)}(f)(y)|}{|x-y|^{n+\lz}}\,dy
\leq Cr^{-\frac{n}{p}-\lz+\alpha n}
\|f\|_{JN_{(p,q,s)_\alpha}^{\mathrm{con}}(\rn)}.
\end{align*}
\end{lem}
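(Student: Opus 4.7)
The plan is to attack this via a dyadic annular decomposition of $\rn\setminus B(x,r)$. Write $B:=B(x,r)$ and split the complement as $\rn\setminus B=\bigcup_{k=0}^{\infty}(2^{k+1}B\setminus 2^{k}B)$. On each annulus one has $|x-y|\sim 2^{k}r$, so the left-hand side is controlled by $\sum_{k\ge 0}(2^{k}r)^{-n-\lambda}\int_{2^{k+1}B}|f(y)-P_{B}^{(s)}(f)(y)|\,dy$. The natural move is then the triangle inequality
\begin{align*}
|f(y)-P_{B}^{(s)}(f)(y)|\le |f(y)-P_{2^{k+1}B}^{(s)}(f)(y)|+|P_{2^{k+1}B}^{(s)}(f)(y)-P_{B}^{(s)}(f)(y)|,
\end{align*}
so that the ``best local approximation'' $P_{2^{k+1}B}^{(s)}(f)$ replaces the ``small-ball'' polynomial $P_{B}^{(s)}(f)$ when integrating over the $k$-th annulus.

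For the first piece, H\"older's inequality gives
\begin{align*}
(2^{k}r)^{-n-\lambda}\int_{2^{k+1}B}|f(y)-P_{2^{k+1}B}^{(s)}(f)(y)|\,dy
\lesssim (2^{k}r)^{-\lambda}\lf[\fint_{2^{k+1}B}|f(y)-P_{2^{k+1}B}^{(s)}(f)(y)|^{q}\,dy\r]^{\frac{1}{q}},
\end{align*}
which is exactly the shape of the claimed right-hand side. For the polynomial-difference piece, the standard equivalence of norms on polynomials of degree $\le s$ over balls (together with the blow-up factor $(2^{k-j})^{s}$ incurred when extending a polynomial from $2^{j}B$ to $2^{k+1}B$) yields, via the telescoping identity $P_{2^{k+1}B}^{(s)}(f)-P_{B}^{(s)}(f)=\sum_{j=0}^{k}[P_{2^{j+1}B}^{(s)}(f)-P_{2^{j}B}^{(s)}(f)]$, the bound
\begin{align*}
\|P_{2^{k+1}B}^{(s)}(f)-P_{B}^{(s)}(f)\|_{L^{\infty}(2^{k+1}B)}
\lesssim \sum_{j=0}^{k}2^{(k-j)s}\lf[\fint_{2^{j+1}B}|f-P_{2^{j+1}B}^{(s)}(f)|^{q}\r]^{\frac{1}{q}}.
\end{align*}
Multiplying by $(2^{k}r)^{-\lambda}$ and summing in $k$, the factor $2^{(k-j)s-(k-j)\lambda}$ is summable thanks to the hypothesis $\lambda>s$, and after re-indexing the double sum collapses to the required single sum over $j$. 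This is the step I expect to be the main obstacle: one has to be careful with how polynomials of degree $\le s$ (which are not centered) scale between balls, and the condition $\lambda>s$ is precisely what makes the geometric series in the telescoping converge.

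Finally, for the second displayed inequality, I would feed the definition of $\|f\|_{JN_{(p,q,s)_\alpha}^{\mathrm{con}}(\rn)}$ into the first one. Testing that definition against the single collection $\{2^{k}B(x,r)\}$ gives
\begin{align*}
\lf[\fint_{2^{k}B(x,r)}|f-P_{2^{k}B(x,r)}^{(s)}(f)|^{q}\r]^{\frac{1}{q}}
\lesssim (2^{k}r)^{n\alpha-\frac{n}{p}}\|f\|_{JN_{(p,q,s)_\alpha}^{\mathrm{con}}(\rn)},
\end{align*}
and plugging this into the first inequality yields a geometric series $\sum_{k}(2^{k}r)^{-\lambda+n\alpha-n/p}$, whose convergence is guaranteed precisely by the hypothesis $\alpha<\frac{1}{p}+\frac{\lambda}{n}$. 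Summing gives the asserted $r^{-n/p-\lambda+\alpha n}\|f\|_{JN_{(p,q,s)_\alpha}^{\mathrm{con}}(\rn)}$ bound and completes the proof.
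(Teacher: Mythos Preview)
Your argument is correct and follows the standard route: dyadic annular decomposition of $\rn\setminus B$, replacing $P_{B}^{(s)}(f)$ by $P_{2^{k+1}B}^{(s)}(f)$ via a telescoping sum, invoking the polynomial norm equivalence together with the blow-up factor $2^{(k-j)s}$ when passing from $2^{j+1}B$ to $2^{k+1}B$, and summing the resulting geometric series (convergent precisely because $\lambda>s$); the second displayed inequality then follows from the single-ball bound and the hypothesis $\alpha<\frac{1}{p}+\frac{\lambda}{n}$. The paper itself does not prove this lemma but simply quotes it from an earlier article of the same authors, so there is no in-paper argument to compare against; your approach is exactly the expected one.
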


Next, we give an equivalent expression of \eqref{I-x-gam}
in Proposition \ref{I-a-x} below. To this end,
we first establish a crucial lemma.
In what follows, for any $s\in\zz_+$, any $q\in[1,\infty]$,
and any measurable subset $E\subset\rn$,
the \emph{space $L_s^q(E)$} is defined by setting
\begin{align*}
L_s^q(E):=\lf\{f\in L^q(E):\ \int_{E}f(x)x^{\gamma}\,dx=0
\text{ for any } \gamma\in \zz_+^n\text{ with } |\gamma|\leq s\r\}.
\end{align*}

\begin{lem}\label{I-a-x-lem}
Let $s\in\zz_+$, $\dz\in(0,1]$,
$\beta\in(0,\dz)$, $k_{\beta}$ be an
$s$-order fractional kernel with regularity $\dz$,
and $I_\beta$ the
fractional integral with kernel $k_{\beta}$.
Let $\widetilde{k}_{\beta}$ be as in \eqref{k-w},
$B_0:=B(x_0,r_0)$ a given ball of $\rn$ with $x_0\in\rn$
and $r_0\in(0,\fz)$, and
$\widetilde{I}_{\beta,B_0}$ as in \eqref{D-Iw}
with kernel $\widetilde{k}_{\beta}$.
Then, for any $\nu:=(\nu_1,\ldots,\nu_n)\in\zz_+^n$ with $|\nu|\leq s$,
\begin{itemize}
\item[\rm (i)]
$\widetilde{I}_{\beta,B_0}(y^{\nu})$ is well defined almost
everywhere on $\rn$,
and $\widetilde{I}_{\beta,B_0}(y^{\nu})\in L^b_{\mathrm{loc}}(\rn)$
for any given $b\in[1,\fz)$, here and thereafter, for any
$y:=(y_1,\ldots,y_n)\in\rn$, $y^{\nu}:=y_1^{\nu_1}\cdots y_n^{\nu_n}$;

\item[\rm (ii)]
for any ball $B_1\subset\rn$,
\begin{align*}
\widetilde{I}_{\beta,B_0}(y^{\nu})
-\widetilde{I}_{\beta,B_1}(y^{\nu})\in \mathcal{P}_s(\rn)
\end{align*}
after changing values
on a set of measure zero;

\item[\rm (iii)]
for any $q\in(1,\fz)$ and $a\in L^q_{s}(\rn)$
with bounded support,
\begin{align}\label{0-0}
\int_{\rn} I_{\beta}(a)(x)x^{\nu}\,dx
=\int_{\rn}a(x)\widetilde{I}_{\beta,B_0}(y^{\nu})(x)\,dx.
\end{align}
\end{itemize}	
\end{lem}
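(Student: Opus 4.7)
Lemma \ref{I-a-x-lem} has three parts: (i) adapts the method of Proposition \ref{prop-2.18x} to the polynomial input $y^\nu$, (ii) is an application of Lemma \ref{int-B-P} to the difference, and (iii) combines Fubini with the vanishing moments of $a$. All three rely on the same mechanism: the Taylor correction baked into $\widetilde{I}_{\beta,B_0}$ produces a remainder that decays fast enough, thanks to $\dz>\beta$, to absorb the polynomial growth of $y^\nu$.

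For (i), since $y^\nu\notin L^q(\rn)$, Proposition \ref{prop-2.18x} does not apply directly. Fixing a compact set $K\subset\rn$ and a ball $\widehat{B}:=B(x_0,R)$ with $R$ large enough that $B_0\subset\widehat{B}$ and $|y-x_0|\ge 2|x-x_0|$ for every $x\in K$ and $y\in\rn\setminus\widehat{B}$, I would split $\widetilde{I}_{\beta,B_0}(y^\nu)(x)$ into a near piece $\int_{\widehat{B}}\widetilde{k}_{\beta}(x,y)y^\nu\,dy$, a correction piece $-\int_{\widehat{B}\setminus B_0}\sum_{|\gamma|\le s}\frac{\partial_{(1)}^{\gamma}\widetilde{k}_{\beta}(x_0,y)}{\gamma!}(x-x_0)^\gamma y^\nu\,dy$, and a tail piece $\int_{\rn\setminus\widehat{B}}\lf[\widetilde{k}_{\beta}(x,y)-\sum_{|\gamma|\le s}\frac{\partial_{(1)}^{\gamma}\widetilde{k}_{\beta}(x_0,y)}{\gamma!}(x-x_0)^\gamma\r]y^\nu\,dy$. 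The near piece equals $\widetilde{I}_{\beta}(y^\nu\mathbf{1}_{\widehat{B}})(x)$ and is bounded on $\rn$ by Lemma \ref{lem2.10x} (applied with any $q>n/\beta$). The correction piece is a polynomial in $x$ of degree $\le s$ with finite coefficients. The tail, via Taylor's theorem combined with \eqref{regular-i} for $|\gamma|=s$, is bounded by $|x-x_0|^{s+\dz}\int_{\rn\setminus\widehat{B}}|y|^{|\nu|}|y-x_0|^{-n-s-\dz+\beta}\,dy$, which converges because $\dz>\beta$ and $|\nu|\le s$. All three pieces are locally bounded in $x$, hence $\widetilde{I}_{\beta,B_0}(y^\nu)\in L^\infty_{\loc}(\rn)\subset L^b_{\loc}(\rn)$.

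For (ii), set $P_i(x,y):=\sum_{|\gamma|\le s}\frac{\partial_{(1)}^\gamma\widetilde{k}_{\beta}(x_i,y)}{\gamma!}(x-x_i)^\gamma$ for $i\in\{0,1\}$, where $x_1$ is the center of $B_1$. Subtracting the two defining integrals yields, for a.e. $x$, $\widetilde{I}_{\beta,B_0}(y^\nu)(x)-\widetilde{I}_{\beta,B_1}(y^\nu)(x)=\int_{\rn}G(x,y)\,dy$ with $G(x,y):=\lf[\mathbf{1}_{\rn\setminus B_1}(y)P_1(x,y)-\mathbf{1}_{\rn\setminus B_0}(y)P_0(x,y)\r]y^\nu$. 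For each fixed $y$, $G(x,y)\in\mathcal{P}_s(\rn)$ as a function of $x$. To apply Lemma \ref{int-B-P} and obtain the desired conclusion, I would verify $\int_{\rn}|G(x,y)|\,dy<\fz$ for a.e. $x$ by splitting $\rn$ into $B_0\cap B_1$ (where $G=0$), the bounded symmetric differences $B_0\setminus B_1$ and $B_1\setminus B_0$ (where the surviving derivatives $\partial_{(1)}^\gamma\widetilde{k}_{\beta}(x_i,y)$ are bounded because $|y-x_i|$ stays bounded away from $0$), and the far region $\rn\setminus(B_0\cup B_1)$; on the far region, the rewrite $P_1(x,y)-P_0(x,y)=R_0(x,y)-R_1(x,y)$ with $R_i(x,y):=\widetilde{k}_{\beta}(x,y)-P_i(x,y)$, combined with \eqref{regular-i}, gives $|G(x,y)|\ls|y|^{-n-(\dz-\beta)}$, integrable because $\dz>\beta$.

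For (iii), to apply Fubini I would first check that $\int_{\rn}|a(x)|\int_{\rn}|\widetilde{k}_{\beta}(x,y)-\mathbf{1}_{\rn\setminus B_0}(y)P_0(x,y)||y|^{|\nu|}\,dy\,dx<\fz$: by the bounds from (i), the inner integral is uniformly bounded on $\supp\,a$, and $a\in L^1(\rn)$ because $a\in L^q(\rn)$ has bounded support. Swapping the order, the Taylor correction contributes $\sum_{|\gamma|\le s}\frac{\partial_{(1)}^\gamma\widetilde{k}_{\beta}(x_0,y)}{\gamma!}\int_{\rn}a(x)(x-x_0)^\gamma\,dx$; expanding $(x-x_0)^\gamma$ in monomials of degree $\le s$ and using $a\in L^q_s(\rn)$ kills each such inner integral. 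What remains is $\int_{\rn}y^\nu\int_{\rn}a(x)\widetilde{k}_{\beta}(x,y)\,dx\,dy=\int_{\rn}y^\nu I_{\beta}(a)(y)\,dy$, which is \eqref{0-0} after relabeling $y$ as $x$. The main obstacle across (ii) and (iii) is precisely this absolute-convergence check: each individual Taylor-polynomial correction fails to be integrable over $\rn\setminus B_0$ on its own, so in both cases one must work with a single integrand whose integrability comes from the Taylor cancellation inside $\widetilde{I}_{\beta,B_0}$ together with $\dz>\beta$.
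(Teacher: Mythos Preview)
Your proofs of (i) and (ii) follow essentially the same line as the paper's: the same near/correction/tail split for (i), with Lemma~\ref{lem2.10x} handling the near piece and the Taylor remainder plus \eqref{regular-i} handling the tail; and the same appeal to Lemma~\ref{int-B-P} for (ii), your integrability verification being a more detailed version of what the paper dismisses with a reference back to (i).

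Your route through (iii) is correct but organized differently from the paper's. The paper starts from the \emph{left} side of \eqref{0-0}, splits the $x$-integration into $2B$ and $\rn\setminus 2B$ (where $B\supset\supp\,(a)$), justifies Fubini on the near piece via the Hardy--Littlewood--Sobolev bound of Lemma~\ref{fractional} and on the far piece via the Taylor-remainder decay, recombines to obtain $\int_B a(y)\,\widetilde{I}_{\beta,2B}(x^\nu)(y)\,dy$, and only then invokes (ii) together with the vanishing moments of $a$ to pass from $2B$ to $B_0$. You instead start from the \emph{right} side, do a single Fubini check by recycling the locally-uniform absolute bounds already established in (i), and kill the entire Taylor correction in one stroke via $\int a(x)(x-x_0)^\gamma\,dx=0$; what survives is exactly $\int y^\nu I_\beta(a)(y)\,dy$. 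Your argument is shorter and avoids the detour through $\widetilde{I}_{\beta,2B}$, at the cost of making the absolute-integrability check lean on (i) rather than being self-contained; the paper's near/far split makes the two sources of convergence (HLS smoothing near the diagonal, Taylor cancellation far away) visible separately.
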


\begin{proof}
Let $s$, $\dz$, $\beta$, $k_{\beta}$, $\widetilde{k}_{\beta}$,
$I_{\beta}$, $B_0:=B(x_0,r_0)$ with $x_0\in\rn$
and $r_0\in(0,\fz)$, and $\widetilde{I}_{\beta,B_0}$
be as in the present lemma.
We first prove (i). Indeed, let $\widetilde{B}$ be any given ball of $\rn$,
and $$R:=\sup\lf\{2|x-x_0|+2|x_0|+r_0+1:\ x\in \widetilde{B}\r\}.$$
It is easy to show that $(B_0\cup \widetilde{B})\subset B(x_0,R)$
and, for any $x\in \widetilde{B}$ and $y\in \rn\setminus B(x_0,R)$,
\begin{align*}
|y|\geq|y-x_0|-|x_0|\geq1,\quad |y-x_0|\geq2|x-x_0|,
\quad\text{and}\quad|y-x_0|\sim |y|.
\end{align*}
From this, the Taylor remainder theorem, \eqref{size-i},
and $\beta\in (0,\dz)$, we deduce that,
for any $\nu\in\zz_+^n$ with $|\nu|\leq s$,
and almost every $x\in \widetilde{B}$, there exists an
$\widetilde{x}\in\{\theta x+(1-\theta)x_0:\ \theta\in(0,1)\}$ such that
\begin{align*}
&\lf|\widetilde{I}_{\beta,B_0}(y^{\nu})(x)\r|\\
&\quad\leq\lf|\int_{B(x_0,R)}
\lf[\widetilde{k}_{\beta}(x,y)
-\sum_{\{\gamma\in\zz_+^n:\ |\gamma|\leq s\}}
\frac{\partial_{(1)}^{\gamma}
\widetilde{k}_{\beta}(x_0,y)}{\gamma!}(x-x_0)^\gamma
\mathbf1_{\rn\setminus B_0}(y)\r]y^{\nu}\,dy\r|\\
&\qquad+\int_{\rn\setminus B(x_0,R)}
\lf|\widetilde{k}_{\beta}(x,y)-\sum_{\{\gamma\in\zz_+^n:\ |\gamma|\leq s\}}
\frac{\partial_{(1)}^{\gamma}\widetilde{k}_{\beta}(x_0,y)}
{\gamma!}(x-x_0)^\gamma\r||y|^{|\nu|}\,dy\\
&\quad\leq\lf|\int_{B(x_0,R)}
\widetilde{k}_{\beta}(x,y)y^{\nu}\,dy\r|
+\sum_{\{\gamma\in\zz_+^n:\ |\gamma|\leq s\}}
\int_{B(x_0,R)\setminus B_0}\frac{
|\partial_{(1)}^{\gamma}\widetilde{k}_{\beta}
(x_0,y)||x-x_0|^{|\gamma|}|y|^{|\nu|}}{\gamma!}\,dy\\
&\qquad+\int_{\rn\setminus B(x_0,R)}
\lf|\sum_{\{\gamma\in\zz_+^n:\ |\gamma|=s\}}
\frac{\partial_{(1)}^{\gamma}\widetilde{k}_{\beta}(\widetilde{x},y)
-\partial_{(1)}^{\gamma}\widetilde{k}_{\beta}(x_0,y)}
{\gamma!}(x-x_0)^\gamma\r||y|^{|\nu|}\,dy\\
&\quad\ls\lf|\int_{B(x_0,R)}
\widetilde{k}_{\beta}(x,y)y^{\nu}\,dy\r|
+\sum_{\{\gamma\in\zz_+^n:\ |\gamma|\leq s\}}
|x-x_0|^{|\gamma|}\int_{B(x_0,R)\setminus B_0}
\frac{|y|^{|\nu|}}{|y-x_0|^{n+|\gamma|-\beta}}\,dy\\
&\qquad+\int_{\rn\setminus B(x_0,R)}
\frac{|\widetilde{x}-x_0
|^{\dz}|x-x_0|^s|y|^{s}}{|y-x_0|^{n+s+\dz-\beta}}\,dy\\
&\quad\ls\lf|\int_{B(x_0,R)}
\widetilde{k}_{\beta}(x,y)y^{\nu}\,dy\r|
+1,
\end{align*}
where the implicit positive constants depend on $x_0$ and $R$,
and, in the penultimate step, we used \eqref{regular-i}
together with $|y-x_0|\geq R> 2|\widetilde{x}-x_0|$.
Using this, the fact that $y^{\nu}\in L^{q}_{\mathrm{loc}}(\rn)$
for any $q\in(\frac{n}{\beta},\fz)$,
and Lemma \ref{lem2.10x} with $E$ replaced by $B(x_0,R)$,
we conclude that $\widetilde{I}_{\beta,B_0}(y^{\nu})$ is well defined almost
everywhere on $\rn$,
and $\widetilde{I}_{\beta,B_0}(y^{\nu})\mathbf{1}_{\widetilde{B}}\in L^b(\rn)$
with $b\in[1,\fz)$, which hence completes the proof of (i).

Now, we show (ii). Indeed, let
$B_1:=B(x_1,r_1)$ be any given ball of $\rn$
with $x_1\in\rn$ and $r_1\in(0,\fz)$.
Then, by (i) of the present lemma, we conclude that,
for any $\nu\in\zz_+^n$ with $|\nu|\leq s$, and almost every $x\in\rn$,
\begin{align*}
&\widetilde{I}_{\beta,B_0}(y^{\nu})(x)
-\widetilde{I}_{\beta,B_1}(y^{\nu})(x)\\
&\quad=\int_{\rn}\lf[\widetilde{k}_{\beta}(x,y)
-\sum_{\{\gamma\in\zz_+^n:\ |\gamma|\leq s\}}
\frac{\partial_{(1)}^{\gamma}k_{\beta}(x_0,y)}{\gamma!}(x-x_0)^{\gamma}
\mathbf{1}_{\rn\setminus B_0}(y)\r]y^{\nu}\,dy\\
&\qquad-\int_{\rn}\lf[\widetilde{k}_{\beta}(x,y)
-\sum_{\{\gamma\in\zz_+^n:\ |\gamma|\leq s\}}
\frac{\partial_{(1)}^{\gamma}k_{\beta}(x_1,y)}{\gamma!}(x-x_1)^{\gamma}
\mathbf{1}_{\rn\setminus B_1}(y)\r]y^{\nu}\,dy\\
&\quad=\int_{\rn}\lf[\sum_{\{\gamma\in\zz_+^n:\ |\gamma|\leq s\}}
\frac{\partial_{(1)}^{\gamma}
\widetilde{k}_{\beta}(x_1,y)}{\gamma!}(x-x_1)^{\gamma}
\mathbf{1}_{\rn\setminus B_1}(y)\r.\\
&\qquad\lf.-\sum_{\{\gamma\in\zz_+^n:\ |\gamma|\leq s\}}
\frac{\partial_{(1)}^{\gamma}\widetilde{k}_{\beta}
(x_0,y)}{\gamma!}(x-x_0)^{\gamma}
\mathbf{1}_{\rn\setminus B_0}(y)\r]y^{\nu}\,dy\\
&\quad<\fz,
\end{align*}
which, together with Lemma \ref{int-B-P}, further implies that
\begin{align}\label{P-B0-B1'}
\widetilde{I}_{\beta,B_0}(y^{\nu})
-\widetilde{I}_{\beta,B_1}(y^{\nu})\in \mathcal{P}_s(\rn)
\end{align}
after changing values
on a set of measure zero.
This finishes the proof of (ii).

Next, we prove (iii). For any $q\in(1,\fz)$,
$a\in L^q_{s}(\rn)$ such that $\supp\,(a)\subset B:=B(z,r)\subset \rn$
with $z\in\rn$ and $r\in(0,\fz)$, and
$\nu\in\zz_+^n$ with $|\nu|\leq s$, we have
\begin{align}\label{Ia-f'}
&\int_{\rn} I_{\beta}(a)(x)x^{\nu}\,dx\noz\\
&\quad=\int_{\rn}\int_{B}k_{\beta}(x,y)a(y)\,dy\,x^{\nu}\,dx\noz\\
&\quad=\int_{2B}\int_{B}k_{\beta}(x,y)a(y)\,dy\,x^{\nu}\,dx\noz\\
&\qquad+\int_{\rn\setminus 2B}\int_{B}a(y)\lf[k_{\beta}(x,y)
-\sum_{\{\gamma\in\zz_+^n:\ |\gamma|\leq s\}}
\frac{\partial_{(2)}^{\gamma}k_{\beta}(x,z)}{\gamma!}(y-z)^\gamma
\r]\,dy\,x^{\nu}\,dx\noz\\
&\quad=:\mathrm{E}_1+\mathrm{E}_2
\end{align}
and hence we only need to calculate
$\mathrm{E}_1$ and $\mathrm{E}_2$, respectively.

We first consider $\mathrm{E}_1$. Indeed,
let $\mathcal{I}_{\beta}$ be as in \eqref{cla-I}.
If $q\in(1,\frac{n}{\beta})$, then,
using the H\"older inequality and Lemma \ref{fractional}(ii)
with $b:=q$ and $\widetilde{b}:=\frac{qn}{n-q\beta}$,
we find that
\begin{align}\label{3.13x}
\lf\|\mathcal{I}_{\beta}(|a|)\r\|_{L^q(2B_0)}
\ls\lf\|\mathcal{I}_{\beta}(|a|)\r\|_{L^\frac{qn}{n-q\beta}(2B_0)}
\ls\lf\|\mathcal{I}_{\beta}(|a|)\r\|_{L^\frac{qn}{n-q\beta}(\rn)}
\ls\|a\|_{L^q(B_0)}.
\end{align}
If $q\in[\frac{n}{\beta},\fz)$, then choose $q_1\in(1,\frac{n}{\beta})$
such that $q<\frac{nq_1}{n-\beta q_1}$. From this,
the H\"older inequality, and Lemma \ref{fractional}(ii) with $b:=q_1$
and $\widetilde{b}:=\frac{nq_1}{n-\beta q_1}$, we deduce that
\begin{align}\label{2.15x}
\lf\|\mathcal{I}_{\beta}(|a|)\r\|_{L^q(2B_0)}
\ls \lf\|\mathcal{I}_{\beta}(|a|)\r\|_{L^{\frac{nq_1}{n-\beta q_1}}(2B_0)}
\ls \|a\|_{L^{q_1}(B_0)}\ls \|a\|_{L^q(B_0)},
\end{align}
which, combined with \eqref{size-i'} with
$\gamma:=\mathbf{0}$, \eqref{3.13x}, and the fact that
$x^{\nu}\mathbf{1}_{2B}\in L^{q'}(\rn)$,
further implies that
\begin{align*}
\int_{2B}\int_{B}\lf|k_{\beta}(x,y)a(y)\r|\,dy\,\lf|x^{\nu}\r|\,dx
&\ls\int_{2B}\int_{B}\frac{|a(y)|}
{|x-y|^{n-\beta}}\,dy\,\lf|x^{\nu}\r|\,dx\\
&\ls \lf\|\mathcal{I}_{\beta}(|a|)\r\|_{L^q(2B)}
\lf(\int_{2B}\lf|x^{\nu}\r|^{q'}\,dx\r)^{\frac{1}{q'}}<\fz.
\end{align*}
By this and the Fubini theorem, we find that
\begin{align}\label{II-01'}
\mathrm{E}_1=\int_{B}\int_{2B}k_{\beta}(x,y)x^{\nu}\,dx\,a(y)\,dy.
\end{align}
This is a desired conclusion of $\mathrm{E}_1$.

Now, we consider $\mathrm{E}_2$. To this end, we first show that
\begin{align}\label{I-HK-a-1'}
\widetilde{\mathrm{E}}_2
:=&\int_{\rn\setminus 2B}\int_{B}|a(y)|\lf|k_{\beta}(x,y)
-\sum_{\{\gamma\in\zz_+^n:\ |\gamma|\leq s\}}
\frac{\partial_{(2)}^{\gamma}k_{\beta}(x,z)}{\gamma!}
(y-z)^\gamma\r|\noz\\
&\times\lf|x^{\nu}\r|\,dy\,dx<\fz.
\end{align}
Indeed, using the Tonelli theorem, the Taylor remainder theorem,
and $\beta\in(0,\dz)$, we conclude that,
for any $y\in B$, there exists a $\widetilde{y}\in B$ such that,
for any $\nu\in\zz_+^n$ with $|\nu|\leq s$,
\begin{align*}
\widetilde{\mathrm{E}}_2
&=\int_{B}|a(y)|\int_{\rn\setminus 2B}\lf|k_{\beta}(x,y)
-\sum_{\{\gamma\in\zz_+^n:\ |\gamma|\leq s\}}
\frac{\partial_{(2)}^{\gamma}k_{\beta}(x,z)}{\gamma!}
(y-z)^\gamma\r|\lf|x^{\nu}\r|\,dx\,dy\\
&=\int_{B}|a(y)|\int_{\rn\setminus 2B}
\lf|\sum_{\{\gamma\in\zz_+^n:\ |\gamma|=s\}}
\frac{\partial_{(2)}^{\gamma} k_{\beta}(x,\widetilde{y})
-\partial_{(2)}^{\gamma} k_{\beta}(x,z)}{\gamma!}
(y-z)^{\gamma}\r|\lf|x^{\nu}\r|\,dx\,dy\\
&\ls\int_{B}|a(y)|\int_{\rn\setminus 2B}
\frac{|\widetilde{y}-z|^\dz|y-z|^s
|x|^{|\nu|}}{|x-z|^{n+s+\dz-\beta}}\,dx\,dy\\
&\ls r^{s+\dz}\|a\|_{L^1(B)}\int_{\rn\setminus 2B}
\frac{|x|^{|\nu|}}{|x-z|^{n+s+\dz-\beta}}\,dx<\fz,
\end{align*}
where, in the third step, we used \eqref{regular-i'}
together with $|x-z|\geq 2|\widetilde{y}-z|$.
This implies that \eqref{I-HK-a-1'} holds true.
Then, by \eqref{I-HK-a-1'} and the Fubini theorem, we find that
\begin{align}\label{I-02'}
\mathrm{E}_2=\int_{B}a(y)\int_{\rn\setminus 2B}\lf[k_{\beta}(x,y)
-\sum_{\{\gamma\in\zz_+^n:\ |\gamma|\leq s\}}
\frac{\partial_{(2)}^{\gamma}k_{\beta}(x,z)}{\gamma!}(y-z)^\gamma
\r]x^{\nu}\,dx\,dy.
\end{align}
Altogether, from \eqref{Ia-f'}, \eqref{II-01'}, \eqref{I-02'},
\eqref{k-w}, and \eqref{P-B0-B1'},
we deduce that
\begin{align*}
&\int_{\rn} I_{\beta}(a)(x)x^{\nu}\,dx\\
&\quad=\int_{B}a(y)\int_{\rn}\lf[k_{\beta}(x,y)
-\sum_{\{\gamma\in\zz_+^n:\ |\gamma|\leq s\}}
\frac{\partial_{(2)}^{\gamma}k_{\beta}(x,z)}{\gamma!}(y-z)^\gamma
\mathbf1_{\rn\setminus 2B}(x)\r]x^{\nu}\,dx\,dy\noz\\
&\quad=\int_{B}a(y)\int_{\rn}\lf[\widetilde{k}_{\beta}(y,x)
-\sum_{\{\gamma\in\zz_+^n:\ |\gamma|\leq s\}}
\frac{\partial_{(1)}^{\gamma}
\widetilde{k}_{\beta}(z,x)}{\gamma!}(y-z)^\gamma
\mathbf1_{\rn\setminus 2B}(x)\r]x^{\nu}\,dx\,dy\\
&\quad=\int_{B}a(y)\widetilde{I}_{\beta,2B}(x^{\nu})(y)\,dy
=\int_{B}a(y)\widetilde{I}_{\beta,B_0}(x^{\nu})(y)\,dy,
\end{align*}
which implies that \eqref{0-0} holds true.
This finishes the proof of (iii) and hence of Lemma \ref{I-a-x-lem}.
\end{proof}

\begin{pro}\label{I-a-x}
Let $s\in\zz_+$, $\dz\in(0,1]$, $\beta\in(0,\dz)$, and $k_{\beta}$ be an
$s$-order fractional kernel with regularity $\dz$.
Let $\widetilde{k}_{\beta}$ be the adjoint kernel of $k_{\beta}$ as in \eqref{k-w},
$B_0:=B(x_0,r_0)$ a given ball of $\rn$ with $x_0\in\rn$
and $r_0\in(0,\fz)$, and
$\widetilde{I}_{\beta,B_0}$
as in \eqref{D-Iw}. Then \eqref{I-x-gam}
holds true if and only if, for any
$\gamma\in\zz_+^n$ with $|\gamma|\leq s$,
$\widetilde{I}_{\beta,B_0}(y^{\gamma})\in \mathcal{P}_s(\rn)$
after changing values on a set of measure zero.
\end{pro}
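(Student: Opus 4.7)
The plan is to pivot on Lemma \ref{I-a-x-lem}(iii), which provides, for any $\nu\in\zz_+^n$ with $|\nu|\le s$ and any $a\in L^2_s(\rn)$ with bounded support, the identity
$$\int_{\rn} I_{\beta}(a)(x)x^{\nu}\,dx = \int_{\rn}a(x)\widetilde{I}_{\beta,B_0}(y^{\nu})(x)\,dx.$$
This turns the question into whether the function $g_\nu:=\widetilde{I}_{\beta,B_0}(y^\nu)$ is orthogonal (in the $L^2$ pairing) to every admissible test function $a$, and we then recognize this orthogonality as the statement that $g_\nu$ is a polynomial of degree at most $s$.

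For sufficiency, if $g_\nu$ is almost everywhere equal to some $P_\nu\in\mathcal{P}_s(\rn)$, then the right-hand side of the identity becomes $\int_\rn a(x)P_\nu(x)\,dx$, which vanishes by the assumed vanishing moments of $a$. Thus \eqref{I-x-gam} holds for every admissible $a$. This direction is essentially immediate from the identity, with $q=2$ being within the admissible range of Lemma \ref{I-a-x-lem}(iii).

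For necessity, I would fix an arbitrary ball $B\subset\rn$ and set $X_B:=\{a\in L^2(B):\int_B a(x)x^\gamma\,dx=0\text{ for all }|\gamma|\le s\}$. By Lemma \ref{I-a-x-lem}(i), $g_\nu\in L^2(B)$. Extending elements of $X_B$ by zero produces admissible test functions on $\rn$, so the bridge identity combined with \eqref{I-x-gam} shows that $g_\nu|_B\in X_B^\perp$, where the orthogonal complement is taken in $L^2(B)$. Since $X_B$ is precisely the annihilator in $L^2(B)$ of the finite-dimensional subspace $\mathcal{P}_s(B)$, we have $X_B^\perp=\mathcal{P}_s(B)$, and so $g_\nu$ agrees almost everywhere on $B$ with some polynomial $P_{B,\nu}\in\mathcal{P}_s(B)$. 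Exhausting $\rn$ by an increasing sequence of balls and observing that two polynomials of degree $\le s$ agreeing on a set of positive measure must coincide as elements of $\mathcal{P}_s(\rn)$, I would obtain a single $P_\nu\in\mathcal{P}_s(\rn)$ with $g_\nu=P_\nu$ almost everywhere.

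The main obstacle I anticipate is the duality step: the clean identification $X_B^\perp=\mathcal{P}_s(B)$ in $L^2(B)$ is standard, but care is needed to ensure that the membership $g_\nu\in L^b_{\mathrm{loc}}(\rn)$ provided by Lemma \ref{I-a-x-lem}(i) suffices to place $g_\nu|_B$ inside $L^2(B)$ for every ball $B$, and to patch the local polynomials $\{P_{B,\nu}\}_B$ into a globally defined element of $\mathcal{P}_s(\rn)$ coherently with the almost-everywhere equivalence class implicit in the statement. The remaining work, namely the sufficiency direction and the global patching, is essentially routine once Lemma \ref{I-a-x-lem} is in hand.
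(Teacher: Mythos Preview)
Your proposal is correct and follows essentially the same approach as the paper. Both directions pivot on Lemma \ref{I-a-x-lem}(iii) exactly as you describe; for the necessity, the paper phrases the Hilbert-space step via the explicit projection $P_B^{(s)}$ from \eqref{pq} (applying the bridge identity to $a=[h-P_B^{(s)}(h)]\mathbf{1}_B$ for arbitrary $h\in L^2(B)$) rather than your abstract identification $X_B^\perp=\mathcal{P}_s(B)$, and then patches over $\{2^kB\}_{k\in\nn}$ just as you do, but the content is the same.
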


\begin{proof}
Let $s$, $\dz$, $\beta$, $k_{\beta}$, $\widetilde{k}_{\beta}$,
$B_0$, and $\widetilde{I}_{\beta,B_0}$
be as in the present proposition.
The sufficiency follows immediately from \eqref{0-0}.	
Next, we show the necessity.
Indeed, using \eqref{pq},
we conclude that, for any $h\in L^{2}(\rn)$
supported in a ball $B\subset \rn$,
$h-P_{B}^{(s)}(h)\in L^{2}_s(B)$. By this,
\eqref{I-x-gam} with $a$ replaced by
$[h-P_{B}^{(s)}(h)]\mathbf{1}_B$, \eqref{0-0} with $q:=2$
and $a$ replaced by $[h-P_{B}^{(s)}(h)]\mathbf{1}_B$,
and \eqref{pq}, we conclude that, for any $h\in L^{2}(\rn)$,
\begin{align}\label{arb-h}
0&=\int_{\rn} I_{\beta}\lf(\lf[h-P_{B}^{(s)}(h)\r]
\mathbf{1}_{B}\r)(x)x^{\gamma}\,dx\noz\\
&=\int_{B}\lf[h(x)-P_{B}^{(s)}(h)(x)\r]
\widetilde{I}_{\beta,B_0}(y^{\gamma})(x)\,dx\noz\\
&=\int_{B}\lf[h(x)-P_{B}^{(s)}(h)(x)\r]
\lf[\widetilde{I}_{\beta,B_0}(y^{\gamma})(x)
-P_{B}^{(s)}\lf(\widetilde{I}_{\beta,B_0}(y^{\gamma})\r)(x)\r]\,dx\noz\\
&=\int_{B}h(x)\lf[\widetilde{I}_{\beta,B_0}(y^{\gamma})(x)
-P_{B}^{(s)}\lf(\widetilde{I}_{\beta,B_0}(y^{\gamma})\r)(x)\r]\,dx,
\end{align}
where $I_\beta$ is the fractional integral with kernel $k_{\beta}$.
Moreover, from Lemma \ref{I-a-x-lem}(i), we deduce that,
for any $\gamma\in\zz_+^n$ with $|\gamma|\leq s$,
$$\widetilde{I}_{\beta,B_0}(y^{\gamma})
-P_{B}^{(s)}(\widetilde{I}_{\beta,B_0}(y^{\gamma}))\in L^2(B).$$
By this and \eqref{arb-h}, we conclude that,
for almost every $x\in B$,
\begin{align*}
\widetilde{I}_{\beta,B_0}(y^{\gamma})(x)
=P_{B}^{(s)}\lf(\widetilde{I}_{\beta,B_0}(y^{\gamma})\r)(x).
\end{align*}
Repeating the above procedure with $B$ replaced by $2^kB$
for any given $k\in\nn$,
we find that, for almost every $x\in 2^kB$,
\begin{align*}
\widetilde{I}_{\beta,B_0}(y^{\gamma})(x)
=P_{2^kB}^{(s)}\lf(\widetilde{I}_{\beta,B_0}(y^{\gamma})\r)(x)
\end{align*}
and hence, for almost every $x\in\rn$,
\begin{align*}
\widetilde{I}_{\beta,B_0}(y^{\gamma})(x)
=P_{B}^{(s)}\lf(\widetilde{I}_{\beta,B_0}(y^{\gamma})\r)(x).
\end{align*}
This finishes the proof of the necessity
and hence of Proposition \ref{I-a-x}.
\end{proof}

\begin{cor}\label{coro2.16x}
Let $s\in\zz_+$, $\dz\in(0,1]$, $\beta\in(0,\dz)$,
$k_{\beta}$ be an $s$-order fractional kernel with regularity $\dz$
as in Definition \ref{I-k}, and
$I_{\beta}$ the fractional integral with kernel $k_{\beta}$. Then
\eqref{I-x-gam} holds true if and only if, for any $q\in(1,\fz)$,
any $a\in L_s^q(\rn)$ having bounded support,
and any $\gamma\in\zz_+^n$ with $|\gamma|\leq s$, it holds true that
\begin{align*}
\int_{\rn} I_{\beta}(a)(x)x^\gamma\,dx=0.
\end{align*}
\end{cor}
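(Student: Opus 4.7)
The plan is to reduce the corollary to a direct consequence of Proposition~\ref{I-a-x} and Lemma~\ref{I-a-x-lem}(iii), both of which have already been established.

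For the sufficiency direction, I would simply specialize the hypothesis to $q=2$: any $a\in L^2(\rn)$ with bounded support satisfying $\int_{\rn}a(x)x^\gamma\,dx=0$ for all $\gamma\in\zz_+^n$ with $|\gamma|\leq s$ is exactly an element of $L^2_s(\rn)$ with bounded support, so the assumed condition immediately recovers \eqref{I-x-gam}.

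For the necessity direction, I would start by invoking Proposition~\ref{I-a-x}: under \eqref{I-x-gam}, for each $\gamma\in\zz_+^n$ with $|\gamma|\leq s$ and each fixed ball $B_0$, there exists $P_\gamma\in\mathcal{P}_s(\rn)$ such that $\widetilde{I}_{\beta,B_0}(y^{\gamma})=P_\gamma$ almost everywhere on $\rn$. Then, given any $q\in(1,\fz)$, any $a\in L^q_s(\rn)$ with $\supp\,(a)\subset B$ for some ball $B\subset\rn$, and any such $\gamma$, formula \eqref{0-0} of Lemma~\ref{I-a-x-lem}(iii) yields
$$\int_{\rn}I_{\beta}(a)(x)x^{\gamma}\,dx=\int_{\rn}a(x)\widetilde{I}_{\beta,B_0}(y^{\gamma})(x)\,dx=\int_{B}a(x)P_\gamma(x)\,dx.$$
Since the defining property of $L^q_s(\rn)$ says precisely that $a$ is orthogonal to every monomial $x^\nu$ with $|\nu|\leq s$ on its support, and $P_\gamma\in\mathcal{P}_s(\rn)$ is a linear combination of such monomials, the right-hand side vanishes.

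There is no genuine obstacle: once Proposition~\ref{I-a-x} and Lemma~\ref{I-a-x-lem} are in hand, the argument reduces to a short bookkeeping step. The only side remarks I would insert for rigor are that the product $aP_\gamma$ is integrable on $B$ by the H\"older inequality (as $a\in L^q(B)$ and $P_\gamma$ is locally bounded), and that $\widetilde{I}_{\beta,B_0}(y^{\gamma})$ belongs to $L^b_{\mathrm{loc}}(\rn)$ for every $b\in[1,\fz)$ by Lemma~\ref{I-a-x-lem}(i), which guarantees that the pairing in \eqref{0-0} is well defined.
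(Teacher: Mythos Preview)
Your proposal is correct and follows essentially the same route as the paper: both directions are handled exactly as the paper does, by specializing to $q=2$ for sufficiency and, for necessity, combining Proposition~\ref{I-a-x} with formula \eqref{0-0} of Lemma~\ref{I-a-x-lem}(iii) to reduce to $\int_B a(x)P_\gamma(x)\,dx=0$ for some $P_\gamma\in\mathcal{P}_s(\rn)$. The extra integrability remarks you include are harmless and indeed justified by Lemma~\ref{I-a-x-lem}(i).
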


\begin{proof}
Let $s$, $\dz$, $\beta$, $k_{\beta}$, and $I_{\beta}$ be as in the present
corollary. The sufficiency obviously holds true by
taking $q=2$.
Now, we show the necessity.
Using \eqref{I-x-gam} and Proposition \ref{I-a-x}, we have, for any
$\gamma\in\zz_+^n$ with $|\gamma|\leq s$,
$\widetilde{I}_{\beta,B_0}(y^{\gamma})\in \mathcal{P}_s(\rn)$
after changing values on a set of measure zero.
From this and \eqref{0-0}, we deduce that, for any $q\in(1,\fz)$,
any $a\in L_s^q(\rn)$ having bounded support,
and any $\gamma\in\zz_+^n$ with $|\gamma|\leq s$,
$$
\int_{\rn} I_{\beta}(a)(x)x^\gamma\,dx
=\int_{\rn}a(x)\widetilde{I}_{\beta,B_0}(y^{\gamma})(x)\,dx=0,
$$
where $\widetilde{I}_{\beta,B_0}$ is as in \eqref{D-Iw}.
This finishes the proof of the necessity
and hence of Corollary \ref{coro2.16x}.
\end{proof}

The following lemma is just \cite[Proposition 2.2]{jtyyz1}
which gives an equivalent characterization of
$JN_{(p,q,s)_\alpha}^{\mathrm{con}}(\rn)$.

\begin{lem}\label{AC}
Let $p$, $q\in[1,\infty)$, $s\in\zz_+$, and $\alpha\in \rr$.
Then $f\in JN_{(p,q,s)_\alpha}^{\mathrm{con}}(\rn)$ if and only if
$f\in L^q_{\mathrm{loc}}(\rn)$ and
\begin{align*}
&[f]_{JN_{(p,q,s)\alpha}^{\mathrm{con}}(\rn)}\\
&\quad:=\sup_{r\in(0,\infty)}\lf[\int_{\rn}\lf\{|B(y,r)|^{-\alpha}
\lf[\fint_{B(y,r)}\lf|f(x)-P_{B(y,r)}^{(s)}(f)(x)\r|^q\,dx
\r]^{\frac{1}{q}}\r\}^p\,dy\r]^{\frac{1}{p}}<\fz.
\end{align*}
Moreover, for any $f\in JN_{(p,q,s)_\alpha}^{\mathrm{con}}(\rn)$,
$$
\|f\|_{JN_{(p,q,s)\alpha}^{\mathrm{con}}(\rn)}
\sim [f]_{JN_{(p,q,s)\alpha}^{\mathrm{con}}(\rn)},
$$
where the positive equivalence constants are independent of $f$.
\end{lem}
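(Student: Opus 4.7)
The equivalence is recorded as \cite[Proposition 2.2]{jtyyz1}. My plan is to recover it from two applications of the standard nested-set polynomial-approximation fact: for $E_1\subset E_2\subset\rn$ (cubes or balls) with $|E_2|\sim|E_1|$,
\begin{align*}
\fint_{E_1}\lf|f-P_{E_1}^{(s)}(f)\r|^q\,dx\ls \fint_{E_2}\lf|f-P_{E_2}^{(s)}(f)\r|^q\,dx,
\end{align*}
which follows from the defining orthogonality \eqref{pq} combined with the mutual equivalence of $L^q$-norms on polynomials of degree at most $s$ over comparable regular sets.

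To derive $\|f\|_{JN_{(p,q,s)_\az}^{\mathrm{con}}(\rn)}\ls [f]_{JN_{(p,q,s)_\az}^{\mathrm{con}}(\rn)}$, I would fix $\ell\in(0,\fz)$ and $\{Q_j\}_j\in\Pi_\ell(\rn)$, then inside each $Q_j$ choose a concentric subcube $\widetilde Q_j$ with $|\widetilde Q_j|\sim|Q_j|$ such that $Q_j\subset B(y,c\ell)$ and $|B(y,c\ell)|\sim|Q_j|$ for every $y\in\widetilde Q_j$ (any $c\geq\sqrt{n}$ works). Applying the nested-set fact to $Q_j\subset B(y,c\ell)$, multiplying by $|Q_j|^{-\az}$, raising to the $p$th power, integrating in $y\in\widetilde Q_j$, and summing over the \emph{disjoint} family $\{\widetilde Q_j\}_j$ bounds the sum in the congruent-cube norm by the integral defining $[f]_{JN_{(p,q,s)_\az}^{\mathrm{con}}(\rn)}^p$.

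For the reverse inequality, I would fix $r\in(0,\fz)$, set $\ell:=4r$, and use a translate-averaging argument. For every $t\in[0,\ell)^n$, the $t$-shifted axis-aligned tiling $\{Q_j^{(t)}\}_j$ of $\rn$ by cubes of edge length $\ell$ lies in $\Pi_\ell(\rn)$, so the congruent-cube norm gives
\begin{align*}
\int_{\rn}G(y,t)^p\,dy=\sum_j\lf|Q_j^{(t)}\r|\lf\{\lf|Q_j^{(t)}\r|^{-\az}\lf[\fint_{Q_j^{(t)}}\lf|f-P_{Q_j^{(t)}}^{(s)}(f)\r|^q\,dx\r]^{\f1q}\r\}^p\leq \|f\|_{JN_{(p,q,s)_\az}^{\mathrm{con}}(\rn)}^p,
\end{align*}
where $G(y,t)$ is the normalized average attached to the unique cube $Q(y,t)$ of the $t$-tiling containing $y$. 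Averaging in $t\in[0,\ell)^n$ and using Fubini, the key observation is that for each fixed $y$ the set of translates making $B(y,r)\subset Q(y,t)$ has measure $(\ell-2r)^n=(2r)^n\sim\ell^n$ (those $t$ placing $y$ at distance at least $r$ from every face of $Q(y,t)$). On that subset, the nested-set fact applied to $B(y,r)\subset Q(y,t)$ dominates the ball-averaged integrand by a constant multiple of $G(y,t)^p$ pointwise; integrating in $y$ and taking the supremum over $r$ then gives $[f]_{JN_{(p,q,s)_\az}^{\mathrm{con}}(\rn)}\ls\|f\|_{JN_{(p,q,s)_\az}^{\mathrm{con}}(\rn)}$.

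The main obstacle I anticipate is controlling $P_{B(y,r)}^{(s)}(f)$ when $B(y,r)$ straddles several cubes of any fixed tiling; the translate-averaging step is precisely the device that circumvents this by ensuring a positive-measure set of translates for which $B(y,r)$ actually lies in a single cube of comparable size, reducing the polynomial comparison to the nested-set fact.
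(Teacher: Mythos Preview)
The paper does not prove this lemma at all; it simply records it as \cite[Proposition~2.2]{jtyyz1} and moves on. There is therefore nothing in the present paper to compare your argument against.

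That said, your proof sketch is sound. Both directions rest on the standard ``nested-set'' inequality
\[
\fint_{E_1}\lf|f-P_{E_1}^{(s)}(f)\r|^q\,dx\ \lesssim\ \fint_{E_2}\lf|f-P_{E_2}^{(s)}(f)\r|^q\,dx,
\qquad E_1\subset E_2,\ |E_1|\sim|E_2|,
\]
which follows from the near-minimality property $\fint_E|f-P_E^{(s)}(f)|^q\sim\inf_{P\in\mathcal P_s}\fint_E|f-P|^q$ (this is \cite[(2.12)]{jtyyz1}, also invoked in the paper's proof of Theorem~\ref{Iw-JN-bound}). Your first direction (discrete $\Rightarrow$ continuous) via the concentric subcubes $\widetilde Q_j$ is routine. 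Your second direction via translate-averaging over $t\in[0,\ell)^n$ with $\ell=4r$ is the elegant step: the computation that the set of $t$ for which $B(y,r)\subset Q(y,t)$ has measure $(\ell-2r)^n=(2r)^n\sim\ell^n$ is correct (the map $t\mapsto (y-t)\bmod\ell$ is a measure-preserving bijection of $[0,\ell)^n$), and the rest follows by Fubini exactly as you describe. The device you flag in the last paragraph---replacing a fixed tiling by an average over translates so that each ball sits inside a single comparable cube for a positive fraction of translates---is precisely what makes the argument work and is a standard way to pass between grid-based and continuous formulations of such norms.
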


Next, we show that $\widetilde{I}_{\beta,B_0}$ in
Definition \ref{I-w} is well defined on
$JN_{(p,q,s)_\alpha}^{\mathrm{con}}(\rn)$
via borrowing some ideas from \cite[Theorem 4.1]{N10}.

\begin{pro}\label{I-w-h}
Let $p\in[1,\fz]$, $q\in[1,\fz)$, $s\in\zz_+$, $\dz\in(0,1]$,
$\beta\in(0,\dz)$, $\alpha\in (-\fz,\frac{1}{p}+\frac{s+\dz-\beta}{n})$,
and $k_{\beta}$ be an $s$-order fractional kernel with regularity $\dz$
such that the fractional integral $I_\beta$, with kernel $k_{\beta}$,
has the vanishing moments up to order $s$.
Let $\widetilde{k}_{\beta}$ be the adjoint kernel of $k_{\beta}$ as in \eqref{k-w},
$B_0:=B(x_0,r_0)$ a given ball of $\rn$
with $x_0\in\rn$ and $r_0\in (0,\fz)$, and
$\widetilde{I}_{\beta,B_0}$ as in Definition \ref{I-w}
with kernel $\widetilde{k}_{\beta}$.
Then, for any $f\in JN_{(p,q,s)_\alpha}^{\mathrm{con}}(\rn)$,
$\widetilde{I}_{\beta,B_0}(f)$
is well defined almost everywhere on $\rn$.
\end{pro}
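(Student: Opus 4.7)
The plan is to adapt the $L^q$ absolute-convergence argument from the proof of Proposition~\ref{prop-2.18x} by replacing the global control $f\in L^q(\rn)$ with the combination of local integrability and the far-field growth control that the $JN$ norm provides through Lemma~\ref{I-JN}.

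Fix $f\in JN_{(p,q,s)_\alpha}^{\mathrm{con}}(\rn)$ and $x\in\rn$, and set $\widetilde{B}_{(x)}:=B(x_0,2|x-x_0|+2r_0)\supset B_0$, exactly as in the proof of Proposition~\ref{prop-2.18x}. Split the integral defining $\widetilde{I}_{\beta,B_0}(f)(x)$ according to $\rn=B_0\cup(\widetilde{B}_{(x)}\setminus B_0)\cup(\rn\setminus\widetilde{B}_{(x)})$, noting that the Taylor correction vanishes on $B_0$. On the bounded piece $\widetilde{B}_{(x)}$, the kernel part $\int_{\widetilde{B}_{(x)}}|\widetilde{k}_\beta(x,y)||f(y)|\,dy$ is finite for a.e.\ $x$: since $f\in L^q_{\mathrm{loc}}(\rn)$, we have $f\mathbf{1}_{B(x_0,k)}\in L^q(\rn)$ for each $k\in\nn$, so Lemma~\ref{lem2.10x} applied to this exhausting sequence gives the claim after discarding a countable union of null sets. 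On the annulus $\widetilde{B}_{(x)}\setminus B_0$ we have $|y-x_0|\ge r_0$, so \eqref{size-i} bounds each $|\partial_{(1)}^\gamma\widetilde{k}_\beta(x_0,y)|$ by $Cr_0^{-(n+|\gamma|-\beta)}$ uniformly, and H\"older's inequality controls the Taylor-correction integral by a finite constant depending on $x$, $r_0$, and $\|f\|_{L^q(\widetilde{B}_{(x)})}$.

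The essential new work lies in the tail $y\in\rn\setminus\widetilde{B}_{(x)}$. By the Taylor remainder theorem there exists $\widetilde{x}$ on the segment from $x_0$ to $x$ for which the bracket in \eqref{D-Iw} equals $\sum_{|\gamma|=s}[\partial_{(1)}^\gamma\widetilde{k}_\beta(\widetilde{x},y)-\partial_{(1)}^\gamma\widetilde{k}_\beta(x_0,y)](x-x_0)^\gamma/\gamma!$. Apply \eqref{regular-i} after exchanging the roles of its variables, namely with $x:=x_0$ and $\omega:=\widetilde{x}$; its hypothesis $|x-y|\ge 2|x-\omega|$ becomes $|y-x_0|\ge 2|\widetilde{x}-x_0|$, which holds here because $|y-x_0|\ge 2|x-x_0|+2r_0\ge 2|\widetilde{x}-x_0|$. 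Using also $|\widetilde{x}-x_0|\le|x-x_0|$, we obtain
$$
\lf|\widetilde{k}_\beta(x,y)-\sum_{\{\gamma\in\zz_+^n:\ |\gamma|\le s\}}\frac{\partial_{(1)}^\gamma\widetilde{k}_\beta(x_0,y)}{\gamma!}(x-x_0)^\gamma\r|\ls\frac{|x-x_0|^{s+\dz}}{|x_0-y|^{n+s+\dz-\beta}}.
$$
To absorb $f$ against this decaying tail kernel, split $f=(f-P)+P$ with $P:=P_{\widetilde{B}_{(x)}}^{(s)}(f)$. For $f-P$, Lemma~\ref{I-JN} applies with $\lz:=s+\dz-\beta$: the assumption $\beta<\dz$ yields $\lz>s$, and $\alpha<\frac{1}{p}+\frac{s+\dz-\beta}{n}$ is precisely the required $\alpha<\frac{1}{p}+\frac{\lz}{n}$, giving a finite bound in terms of $\|f\|_{JN_{(p,q,s)_\alpha}^{\mathrm{con}}(\rn)}$ and a power of the radius of $\widetilde{B}_{(x)}$. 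For $P$, writing it as a polynomial in $(y-x_0)$ of degree $\le s$ with coefficients controlled by $\|f\|_{L^q(\widetilde{B}_{(x)})}$, each monomial contribution is integrable against $|x_0-y|^{-(n+s+\dz-\beta)}$ on $\rn\setminus\widetilde{B}_{(x)}$ since the resulting integrand decays at infinity like $|y|^{-n-(\dz-\beta)}$, integrable thanks to $\dz>\beta$.

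The main obstacle is the careful bookkeeping of the many $x$-dependent constants and the geometric verification needed when invoking \eqref{regular-i} against the Taylor remainder; the variable-swap noted above is what makes this clean. The vanishing-moments hypothesis on $I_\beta$ is recorded in the statement for coherence with the subsequent development (e.g.\ Proposition~\ref{I-a-x} and Lemma~\ref{I-a-x-lem}, which assert that $\widetilde{I}_{\beta,B_0}$ transforms polynomials into polynomials and is essentially independent of $B_0$ modulo $\mathcal{P}_s(\rn)$), but it is not actually invoked in the well-definedness argument itself.
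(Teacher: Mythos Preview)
Your argument is correct and uses the same essential ingredients as the paper's proof (Lemma~\ref{lem2.10x} for the local piece, the Taylor remainder together with \eqref{regular-i} for the tail, and Lemma~\ref{I-JN} for the $f-P$ part), but the organization differs. The paper fixes an arbitrary ball $B(z,r)$ and, for $x\in B(z,r)$, writes $f=(f-P_{2B(z,r)}^{(s)}(f))+P_{2B(z,r)}^{(s)}(f)$ \emph{from the outset}, producing a four-term decomposition $F_{B(z,r)}^{(1)}+F_{B(z,r)}^{(2)}+P_{B(z,r),B_0}^{(1)}+P_{B(z,r),B_0}^{(2)}$; the polynomial piece $P_{B(z,r),B_0}^{(2)}=\widetilde{I}_{\beta,B_0}\bigl(P_{2B(z,r)}^{(s)}(f)\bigr)$ is then handled via Lemma~\ref{I-a-x-lem}(i) (the paper cites Proposition~\ref{I-a-x}, but only the well-definedness part is needed here). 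You instead follow the template of Proposition~\ref{prop-2.18x} with the $x$-dependent ball $\widetilde{B}_{(x)}$, postpone the split $f=(f-P)+P$ to the far-field region only, and control the polynomial tail by direct integration, which works because $\beta<\dz$ forces $|y-x_0|^{s}\cdot|y-x_0|^{-(n+s+\dz-\beta)}$ to be integrable at infinity. The advantage of the paper's arrangement is that the decomposition \eqref{I-1}, and in particular the identification of $P_{B(z,r),B_0}^{(1)}$ and $P_{B(z,r),B_0}^{(2)}$ as elements of $\mathcal{P}_s(B(z,r))$, is set up precisely for immediate re-use in the proof of Theorem~\ref{Iw-JN-bound}; your version is slightly leaner if one only wants the bare well-definedness claim. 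Your closing remark is also accurate: the vanishing-moments hypothesis is not actually invoked in either proof of well-definedness, since Lemma~\ref{I-a-x-lem}(i) is unconditional.
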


\begin{proof}
Let $p$, $q$, $s$, $\alpha$, $\dz$, $\beta$, $k_{\beta}$,
$I_{\beta}$, $\widetilde{k}_{\beta}$, $B_0:=B(x_0,r_0)$
with $x_0\in\rn$ and $r_0\in (0,\fz)$, and $\widetilde{I}_{\beta,B_0}$
be as in the present proposition.
We only prove the case $p\in[1,\fz)$ because the proof of the case
$p=\fz$ is similar. Let $B(z,r)$ be any given ball of $\rn$
with $z\in\rn$ and $r\in(0,\fz)$.
Then, by the definition of $\widetilde{I}_{\beta,B_0}$,
we conclude that, for any given $f\in JN_{(p,q,s)_\alpha}^{\mathrm{con}}(\rn)$
and any $x\in B(z,r)$,
\begin{align}\label{I-1}
\widetilde{I}_{\beta,B_0}(f)(x)
=F_{B(z,r)}^{(1)}(x)+F_{B(z,r)}^{(2)}(x)
+P_{B(z,r),B_0}^{(1)}(x)+P_{B(z,r),B_0}^{(2)}(x),
\end{align}
where
\begin{align}\label{2.22x}
F_{B(z,r)}^{(1)}(x):=\int_{2B(z,r)}\widetilde{k}_{\beta}(x,y)
\lf[f(y)-P_{2B(z,r)}^{(s)}(f)(y)\r]\,dy,
\end{align}

\begin{align}\label{2.22y}
F_{B(z,r)}^{(2)}(x):=&\int_{\rn\setminus 2B(z,r)}
\lf[\widetilde{k}_{\beta}(x,y)
-\sum_{\{\gamma\in\zz_+^n:\ |\gamma|\leq s\}}
\frac{\partial_{(1)}^{\gamma}
\widetilde{k}_{\beta}(z,y)}{\gamma!}(x-z)^{\gamma}\r]\noz\\
&\times\lf[f(y)-P_{2B(z,r)}^{(s)}(f)(y)\r]\,dy,
\end{align}

\begin{align}\label{2.22z}
P_{B(z,r),B_0}^{(1)}(x):=\int_{\rn}
K_{B(z,r),B_0}(x,y)\lf[f(y)-P_{2B(z,r)}^{(s)}(f)(y)\r]\,dy
\end{align}
with
\begin{align*}
K_{B(z,r),B_0}(x,y)
&:=\sum_{\{\gamma\in\zz_+^n:\ |\gamma|\leq s\}}
\frac{\partial_{(1)}^{\gamma}\widetilde{k}_{\beta}(z,y)}{\gamma!}(x-z)^{\gamma}
\mathbf1_{\rn\setminus 2B(z,r)}(y)\\
&\quad-\sum_{\{\gamma\in\zz_+^n:\ |\gamma|\leq s\}}
\frac{\partial_{(1)}^{\gamma}
\widetilde{k}_{\beta}(x_0,y)}{\gamma!}(x-x_0)^{\gamma}
\mathbf1_{\rn\setminus B_0}(y),
\end{align*}
and
\begin{align}\label{2.22w}
P_{B(z,r),B_0}^{(2)}(x):=&\int_{\rn}
\lf[\widetilde{k}_{\beta}(x,y)
-\sum_{\{\gamma\in\zz_+^n:\ |\gamma|\leq s\}}
\frac{\partial_{(1)}^{\gamma}
\widetilde{k}_{\beta}(x_0,y)}{\gamma!}(x-x_0)^{\gamma}
\mathbf1_{\rn\setminus B_0}(y)\r]\noz\\
&\times P_{2B(z,r)}^{(s)}(f)(y)\,dy.
\end{align}

We first consider $F_{B(z,r)}^{(1)}$.
Indeed, since $f\in JN_{(p,q,s)_\alpha}^{\mathrm{con}}(\rn)$,
it follows that
\begin{align*}
|2B(z,r)|^{\frac1p-\az}\lf[\fint_{2B(z,r)}
\lf|f(y)-P_{2B(z,r)}^{(s)}(f)(y)\r|^q\,dy\r]^\frac1q
\ls\|f\|_{JN_{(p,q,s)_\alpha}^{\mathrm{con}}(\rn)}<\fz.
\end{align*}
This implies that
$[f-P_{2B(z,r)}^{(s)}(f)]\mathbf1_{2B(z,r)}\in L^q(\rn)$.
From this and Lemma \ref{lem2.10x},
we deduce that $F_{B(z,r)}^{(1)}(x)$ is well defined
for almost every $x\in B(z,r)$.

Now, we consider $F_{B(z,r)}^{(2)}$.
By the Taylor remainder theorem,
we find that, for any $x\in B(z,r)$,
there exists an $\widetilde{x}\in B(z,r)$ such that, for any $x\in B(z,r)$,
\begin{align}\label{I-E-1}
\lf|F_{B(z,r)}^{(2)}(x)\r|
&\leq\int_{\rn\setminus 2B(z,r)}
\lf|\widetilde{k}_{\beta}(x,y)
-\sum_{\{\gamma\in\zz_+^n:\ |\gamma|\leq s\}}
\frac{\partial_{(1)}^{\gamma}
\widetilde{k}_{\beta}(z,y)}{\gamma!}(x-z)^{\gamma}\r|\noz\\
&\quad\times\lf|f(y)-P_{2B(z,r)}^{(s)}(f)(y)\r|\,dy\noz\\
&=\int_{\rn\setminus 2B(z,r)}
\lf|\sum_{\{\gamma\in\zz_+^n:\ |\gamma|=s\}}
\frac{\partial_{(1)}^{\gamma} \widetilde{k}_{\beta}(\widetilde{x},y)
-\partial_{(1)}^{\gamma} \widetilde{k}_{\beta}(z,y)}
{\gamma!}(x-z)^{\gamma}\r|\noz\\
&\quad\times\lf|f(y)-P_{2B(z,r)}^{(s)}(f)(y)\r|\,dy\noz\\
&\lesssim \int_{\rn\setminus 2B(z,r)}
\frac{|\widetilde{x}-x|^{\dz}|x-z|^{s}}{|y-z|^{n+s+\dz-\beta}}
\lf|f(y)-P_{2B(z,r)}^{(s)}(f)(y)\r|\,dy\noz\\
&\lesssim r^{s+\dz}\int_{\rn\setminus 2B(z,r)}
\frac{|f(y)-P_{2B(z,r)}^{(s)}(f)(y)|}{|y-z|^{n+s+\dz-\beta}}\,dy,
\end{align}
where, in the penultimate step, we used \eqref{regular-i}
together with $|y-z|\geq2|\widetilde{x}-z|$
for any $y\in \rn\setminus 2B(z,r)$.
Using this, $\alpha\in (-\fz,\frac{1}{p}+\frac{s+\dz-\beta}{n})$,
and Lemma \ref{I-JN} with
$\lz:=s+\dz-\beta\in(s,\fz)$, we conclude that
\begin{align*}
\lf|F_{B(z,r)}^{(2)}(x)\r|
\lesssim\|f\|_{JN_{(p,q,s)_\alpha}^{\mathrm{con}}(\rn)}<\fz.
\end{align*}
This shows that $F_{B(z,r)}^{(2)}(x)$
is well defined for any $x\in B(z,r)$.

Next, we consider $P_{B(z,r),B_0}^{(1)}$.
To this end, let $R:=3|z-x_0|+2r+r_0$.
Then $[B_0\cup 2B(z,r)]\subset B(z,R)$. From this
and \eqref{size-i}, we deduce that,
for any $x\in B(z,r)$ and $y\in B(z,R)$,
\begin{align}\label{2.26x}
&\lf|K_{B(z,r),B_0}(x,y)\r|\noz\\
&\quad\leq\lf|\sum_{\{\gamma\in\zz_+^n:\ |\gamma|\leq s\}}
\frac{\partial_{(1)}^{\gamma}
\widetilde{k}_{\beta}(z,y)}{\gamma!}(x-z)^{\gamma}
\mathbf1_{\rn\setminus 2B(z,r)}(y)\r|\noz\\
&\qquad+\lf|\sum_{\{\gamma\in\zz_+^n:\ |\gamma|\leq s\}}
\frac{\partial_{(1)}^{\gamma}
\widetilde{k}_{\beta}(x_0,y)}{\gamma!}(x-x_0)^{\gamma}
\mathbf1_{\rn\setminus B_0}(y)\r|\noz\\
&\quad\ls\sum_{\{\gamma\in\zz_+^n:\ |\gamma|\leq s\}}
\frac{|x-z|^{|\gamma|}}{|y-z|^{n+|\gamma|}}
\mathbf1_{\rn\setminus 2B(z,r)}(y)
+\sum_{\{\gamma\in\zz_+^n:\ |\gamma|\leq s\}}
\frac{|x-x_0|^{\gamma}}{|y-x_0|^{n+|\gamma|}}
\mathbf1_{\rn\setminus B_0}(y)\noz\\
&\quad\ls\sum_{\{\gamma\in\zz_+^n:\ |\gamma|\leq s\}}
\frac{|x-z|^{|\gamma|}}{r^{n+|\gamma|-\beta}}
+\sum_{\{\gamma\in\zz_+^n:\ |\gamma|\leq s\}}
\frac{|x-x_0|^{|\gamma|}}{r_0^{n+|\gamma|-\beta}}\noz\\
&\quad\ls\frac{1}{r^{n-\beta}}
+\sum_{\{\gamma\in\zz_+^n:\ |\gamma|\leq s\}}
\frac{(r+|z-x_0|)^{\gamma}}{r_0^{n+|\gamma|-\beta}}
\sim 1,
\end{align}
where the implicit positive constants depend on $x_0$, $r_0$, $z$, and $r$.
Moreover, using the Taylor remainder
theorem, we find that,
for any $x\in B(z,r)$ and $y\in \rn\setminus B(z,R)$,
there exist a $z_1\in B(z,r)$ and a
$z_2\in B(x_0,|z-x_0|+r)\subset B(z,R)$ such that
\begin{align*}
\lf|K_{B(z,r),B_0}(x,y)\r|
&\leq\lf|\sum_{\{\gamma\in\zz_+^n:\ |\gamma|\leq s\}}
\frac{\partial_{(1)}^{\gamma}
\widetilde{k}_{\beta}(z,y)}{\gamma!}(x-z)^{\gamma}
-\widetilde{k}_{\beta}(x,y)\r|\\
&\quad+\lf|\widetilde{k}_{\beta}(x,y)
-\sum_{\{\gamma\in\zz_+^n:\ |\gamma|\leq s\}}
\frac{\partial_{(1)}^{\gamma}
\widetilde{k}_{\beta}(x_0,y)}{\gamma!}(x-x_0)^{\gamma}\r|\\
&=\lf|\sum_{\{\gamma\in\zz_+^n:\ |\gamma|=s\}}
\frac{\partial_{(1)}^{\gamma}\widetilde{k}_{\beta}(z,y)
-\partial_{(1)}^{\gamma}\widetilde{k}_{\beta}(z_1,y)}{\gamma!}
(x-z)^{\gamma}\r|\\
&\quad+\lf|\sum_{\{\gamma\in\zz_+^n:\ |\gamma|=s\}}
\frac{\partial_{(1)}^{\gamma}\widetilde{k}_{\beta}(z_2,y)
-\partial_{(1)}^{\gamma}\widetilde{k}_{\beta}(x_0,y)}{\gamma!}
(x-x_0)^{\gamma}\r|\\
&\ls\sum_{\{\gamma\in\zz_+^n:\ |\gamma|=s\}}
\frac{|z-z_1|^{\delta}|x-z|^{|\gamma|}}{|y-z|^{n+|\gamma|+\dz-\beta}}
+\sum_{\{\gamma\in\zz_+^n:\ |\gamma|=s\}}
\frac{|z_2-x_0|^{\delta}|x-x_0|^{|\gamma|}}{|y-z|^{n+|\gamma|+\dz-\beta}}\\
&\ls\frac{r^{s+\delta}+(r+|z-x_0|)^{s+\dz}}{|y-z|^{n+s+\dz-\beta}}
\sim\frac{1}{|y-z|^{n+s+\dz-\beta}},
\end{align*}
where the implicit positive constants depend on $x_0$, $r_0$, $z$, and $r$,
and, in the third step, we used \eqref{regular-i}
together with $|y-z|\geq 2|z_1-z|$, $|y-x_0|\geq 2|z_2-x_0|$,
and $|y-x_0|\sim |y-z|$.
Combining this with \eqref{2.26x}, the H\"older inequality,
$\alpha\in (-\fz,\frac{1}{p}+\frac{s+\dz-\beta}{n})$,
and Lemma \ref{I-JN} with $\lz:=s+\dz-\beta\in(s,\fz)$,
we conclude that, for any $x\in B(z,r)$,
\begin{align*}
\lf|P_{B(z,r),B_0}^{(1)}(x)\r|
&\leq\int_{\rn}\lf|K_{B(z,r),B_0}(x,y)\r|
\lf|f(y)-P_{2B(z,r)}^{(s)}(f)(y)\r|\,dy\\
&\lesssim\int_{B(z,R)}\lf|f(y)-P_{2B(z,r)}^{(s)}(f)(y)\r|\,dy\\
&\quad+\int_{\rn\setminus 2B(z,r)}\frac{|f(y)-P_{2B(z,r)}^{(s)}(f)(y)|}
{|z-y|^{n+s+\dz-\beta}}\,dy\\
&\lesssim\lf\|f-P_{2B(z,r)}^{(s)}(f)\r\|_{L^1(B(z,R))}
+\|f\|_{JN_{(p,q,s)_\alpha}^{\mathrm{con}}(\rn)}
<\fz.
\end{align*}
This implies that $P_{B(z,r),B_0}^{(1)}(x)$
is well defined for any $x\in B(z,r)$.
Furthermore, by Lemma \ref{int-B-P}, we find that
\begin{align}\label{2.27x}
P_{B(z,r),B_0}^{(1)}\in \mathcal{P}_{s}(B(z,r)).
\end{align}

Finally, Proposition \ref{I-a-x} implies that $P_{B(z,r),B_0}^{(2)}(x)$
is well defined for almost every $x\in B(z,r)$.
This, together with the arbitrariness
of $B(z,r)$, then finishes the proof of Proposition \ref{I-w-h}.
\end{proof}

\begin{rem}\label{Rem-Iw}
In Definition \ref{I-w}, we claim that,
for any given ball $B_1$ of $\rn$,
$$\widetilde{I}_{\beta,B_0}(f)
-\widetilde{I}_{\beta,B_1}(f)\in \mathcal{P}_s(\rn)$$
after changing values on a set of measure zero.
Indeed, from \eqref{D-Iw} and Proposition \ref{I-w-h}, we deduce that,
for any given ball $B_1:=B(x_1,r_1)$ with $x_1\in\rn$ and $r_1\in(0,\fz)$,
and for almost every $x\in\rn$,
\begin{align*}
&\widetilde{I}_{\beta,B_0}(f)(x)
-\widetilde{I}_{\beta,B_1}(f)(x)\\
&\quad=\int_{\rn}\lf[\widetilde{k}_{\beta}(x,y)
-\sum_{\{\gamma\in\zz_+^n:\ |\gamma|\leq s\}}
\frac{\partial_{(1)}^{\gamma}
\widetilde{k}_{\beta}(x_0,y)}{\gamma!}(x-x_0)^{\gamma}
\mathbf{1}_{\rn\setminus B_0}(y)\r]f(y)\,dy\\
&\qquad-\int_{\rn}\lf[\widetilde{k}_{\beta}(x,y)
-\sum_{\{\gamma\in\zz_+^n:\ |\gamma|\leq s\}}
\frac{\partial_{(1)}^{\gamma}
\widetilde{k}_{\beta}(x_1,y)}{\gamma!}(x-x_1)^{\gamma}
\mathbf{1}_{\rn\setminus B_1}(y)\r]f(y)\,dy\\
&\quad=\int_{\rn}\lf[\sum_{\{\gamma\in\zz_+^n:\ |\gamma|\leq s\}}
\frac{\partial_{(1)}^{\gamma}
\widetilde{k}_{\beta}(x_1,y)}{\gamma!}(x-x_1)^{\gamma}
\mathbf{1}_{\rn\setminus B_1}(y)\r.\\
&\qquad\lf.-\sum_{\{\gamma\in\zz_+^n:\ |\gamma|\leq s\}}
\frac{\partial_{(1)}^{\gamma}
\widetilde{k}_{\beta}(x_0,y)}{\gamma!}(x-x_0)^{\gamma}
\mathbf{1}_{\rn\setminus B_0}(y)\r]f(y)\,dy\\
&\quad<\fz,
\end{align*}
which, combined with Lemma \ref{int-B-P}, further implies that
$\widetilde{I}_{\beta,B_0}(f)
-\widetilde{I}_{\beta,B_1}(f)\in \mathcal{P}_s(\rn)$
after changing values on a set of measure zero.
Based on this remark and Proposition \ref{prop-2.18x},
in what follows, we write $\widetilde{I}_{\beta}$
instead of $\widetilde{I}_{\beta,B_0}$ if there exists no confusion.
\end{rem}

Now, we establish the boundedness of $\widetilde{I}_\beta$
from $JN_{(p,q_1,s)_\alpha}^{\mathrm{con}}(\rn)$
to $JN_{(p,q_2,s)_{\alpha+\beta/n}}^{\mathrm{con}}(\rn)$
via borrowing some ideas from \cite[Section 5]{N10}.

\begin{thm}\label{Iw-JN-bound}
Let $p\in[1,\fz]$, $q_1$, $q_2\in[1,\fz)$, $s\in\zz_+$, $\dz\in(0,1]$,
$\beta\in (0,\dz)$, $\alpha\in (-\fz,\frac{s+\dz-\beta}{n})$,
$k_{\beta}$ be an $s$-order fractional kernel with regularity $\dz$,
and $I_\beta$ the fractional integral with kernel $k_{\beta}$.
Let $\widetilde{k}_{\beta}$ be the adjoint kernel of $k_{\beta}$ as in \eqref{k-w},
and $\widetilde{I}_{\beta}$ as in Definition \ref{I-w} and Remark \ref{Rem-Iw}
with kernel $\widetilde{k}_{\beta}$.
Then the following two statements are equivalent:
\begin{enumerate}
\item[\rm(i)]
if $q_1=1$ and $q_2\in[1,\frac{n}{n-\beta})$,
or $q_1\in(1,\frac{n}{\beta})$
and $q_2\in[1,\frac{nq_1}{n-\beta q_1}]$,
or $q_1\in[\frac{n}{\beta},\fz)$
and $q_2\in [1,\fz)$, then $\widetilde{I}_{\beta}$
is bounded from $JN_{(p,q_1,s)_\alpha}^{\mathrm{con}}(\rn)$
to $JN_{(p,q_2,s)_{\alpha+\beta/n}}^{\mathrm{con}}(\rn)$, namely,
there exists a positive constant $C$ such that,
for any $f\in JN_{(p,q_1,s)_\alpha}^{\mathrm{con}}(\rn)$,
\begin{align*}
\lf\|\widetilde{I}_{\beta}(f)
\r\|_{JN_{(p,q_2,s)_{\alpha+\beta/n}}^{\mathrm{con}}(\rn)}
\leq C\|f\|_{JN_{(p,q_1,s)_\alpha}^{\mathrm{con}}(\rn)};
\end{align*}
\item[\rm(ii)]
$I_{\beta}$ has the vanishing moments up to order $s$
as in Definition \ref{Def-I-s}.
\end{enumerate}
\end{thm}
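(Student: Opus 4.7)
For the direction (i)$\Rightarrow$(ii), my approach starts from the observation that every polynomial $P\in\mathcal{P}_s(\rn)$ lies in $JN_{(p,q_1,s)_\alpha}^{\mathrm{con}}(\rn)$ with zero norm, because $P=P_Q^{(s)}(P)$ on every cube $Q$. Boundedness of $\widetilde{I}_\beta$ therefore forces $\widetilde{I}_\beta(P)$ to have zero norm in $JN_{(p,q_2,s)_{\alpha+\beta/n}}^{\mathrm{con}}(\rn)$ as well, so that $\widetilde{I}_\beta(P)=P_Q^{(s)}(\widetilde{I}_\beta(P))$ a.e.\ on every cube $Q$, and overlapping cubes then force $\widetilde{I}_\beta(P)\in\mathcal{P}_s(\rn)$ after changing values on a null set. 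Applying this to each monomial $y^\gamma$ with $|\gamma|\leq s$ and invoking Proposition \ref{I-a-x} yields the vanishing moments up to order $s$ for $I_\beta$.

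For the direction (ii)$\Rightarrow$(i), I would fix a ball $B(z,r)\subset\rn$ and use the four-piece decomposition
$$\widetilde{I}_{\beta,B_0}(f)=F_{B(z,r)}^{(1)}+F_{B(z,r)}^{(2)}+P_{B(z,r),B_0}^{(1)}+P_{B(z,r),B_0}^{(2)}$$
recorded in \eqref{I-1}--\eqref{2.22w}. The piece $P_{B(z,r),B_0}^{(1)}$ lies in $\mathcal{P}_s(B(z,r))$ by \eqref{2.27x}, while $P_{B(z,r),B_0}^{(2)}$ equals $\widetilde{I}_{\beta,B_0}(P_{2B(z,r)}^{(s)}(f))$, which, by the vanishing moments hypothesis and Proposition \ref{I-a-x}, belongs to $\mathcal{P}_s(\rn)$ modulo a null set. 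Both pieces are therefore killed on $B(z,r)$ by the subtraction of $P_{B(z,r)}^{(s)}(\widetilde{I}_{\beta,B_0}(f))$, and the task reduces to controlling the $L^{q_2}$ averages on $B(z,r)$ of $F_{B(z,r)}^{(1)}$ and $F_{B(z,r)}^{(2)}$.

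The piece $F_{B(z,r)}^{(1)}$ is $\widetilde{I}_\beta$ applied to the truncation $[f-P_{2B(z,r)}^{(s)}(f)]\mathbf{1}_{2B(z,r)}$; I would control it by Lemma \ref{fractional}(ii) at the Sobolev exponent $nq_1/(n-\beta q_1)$ when $q_1\in(1,n/\beta)$, followed by H\"older on the bounded set $B(z,r)$ to reach every admissible $q_2$, while the cases $q_1=1$ and $q_1\in[n/\beta,\infty)$ are handled analogously using Lemma \ref{fractional}(ii) at $b=1$ and Lemma \ref{lem2.10x} combined with a direct pointwise H\"older estimate, respectively. In every regime the scaling identity $|B|^{1/q_1-1/q_2}\sim|B|^{\beta/n}$ delivers
$$|B(z,r)|^{-(\alpha+\beta/n)}\left[\fint_{B(z,r)}\lf|F_{B(z,r)}^{(1)}\r|^{q_2}\,dx\right]^{1/q_2}\ls |2B(z,r)|^{-\alpha}\left[\fint_{2B(z,r)}\lf|f-P_{2B(z,r)}^{(s)}(f)\r|^{q_1}\,dx\right]^{1/q_1},$$
which already has the right form for the $JN_{(p,q_1,s)_\alpha}^{\mathrm{con}}(\rn)$ density on $2B(z,r)$. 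For $F_{B(z,r)}^{(2)}$ the pointwise estimate \eqref{I-E-1} coupled with Lemma \ref{I-JN} at $\lambda:=s+\delta-\beta\in(s,\infty)$ (licit since $\alpha<(s+\delta-\beta)/n$) rewrites it as a geometric series over the dyadic annuli $2^{k+1}B(z,r)$ carrying the weight $2^{-k(s+\delta-\beta-n\alpha)}$ times the averaged oscillation of $f$ on $2^{k+1}B(z,r)$.

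To assemble these local and tail bounds into the target norm I would use the ball-based characterisation of $JN_{(p,q_2,s)_{\alpha+\beta/n}}^{\mathrm{con}}(\rn)$ in Lemma \ref{AC} when $p\in[1,\infty)$, and the direct supremum over cubes when $p=\infty$. The main technical step, and what I expect to be the principal obstacle, is to take the $L^p$ norm in the ball centre $y$ of the tail contribution, interchange $L^p$ and $\sum_{k}$ via the Minkowski inequality, and then, level by level, perform the change of scale $r\mapsto 2^{k+1}r$ to recognise the integral at level $k$ as the $JN_{(p,q_1,s)_\alpha}^{\mathrm{con}}(\rn)$ norm of $f$ at the enlarged radius. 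Absolute convergence of the resulting geometric series in $k$ rests precisely on the hypothesis $\alpha<(s+\delta-\beta)/n$, and the same constraint is what forces the three admissible regimes of $(q_1,q_2)$ in statement (i), so that the local Hardy--Littlewood--Sobolev step is available without loss.
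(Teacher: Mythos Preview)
Your plan is essentially the paper's own proof: the same four–piece decomposition, the same disposal of the two polynomial pieces via \eqref{2.27x} and Proposition~\ref{I-a-x}, the same pointwise tail bound \eqref{I-E-1} fed into Lemma~\ref{I-JN} with $\lambda=s+\delta-\beta$, and the same use of Lemma~\ref{AC} plus Minkowski in $k$ to sum the dyadic levels. Two minor technical points to repair. First, for $q_1=1$ you cannot invoke Lemma~\ref{fractional}(ii) at $b=1$ as a strong-type bound (the fractional integral is only weak-type at the $L^1$ endpoint); the paper instead uses Minkowski's integral inequality on $F_{B(z,r)}^{(1)}$ and integrates $|x-y|^{-(n-\beta)q_2}$ directly over $B(y,4r)$, which is finite precisely when $q_2<n/(n-\beta)$. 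Second, for $q_1=n/\beta$ Lemma~\ref{lem2.10x} does not apply (it needs $q>n/\beta$ strictly); the paper handles the whole range $q_1\in[n/\beta,\infty)$ by first dropping to an auxiliary $q_3\in(1,n/\beta)$ with $q_2\le nq_3/(n-\beta q_3)$ via H\"older and then applying Lemma~\ref{fractional}(ii) at $b=q_3$. Finally, your closing remark that the constraint $\alpha<(s+\delta-\beta)/n$ ``forces the three admissible regimes of $(q_1,q_2)$'' is a misattribution: those regimes come purely from the local Hardy--Littlewood--Sobolev step for $F_{B(z,r)}^{(1)}$, while the $\alpha$ constraint is used only to sum the tail series for $F_{B(z,r)}^{(2)}$.
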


\begin{proof}
Let $p$, $q_1$, $q_2$, $s$, $\alpha$, $\dz$,
$\beta$, $k_{\beta}$, $\widetilde{k}_{\beta}$, and
$\widetilde{I}_{\beta}$ be as in the present theorem.
We only prove the case $p\in[1,\fz)$
because the proof of $p=\fz$ is similar.

We first show that (i) $\Rightarrow$ (ii).
Indeed, if $\widetilde{I}_{\beta}$
is bounded from $JN_{(p,q_1,s)_\alpha}^{\mathrm{con}}(\rn)$
to $JN_{(p,q_2,s)_{\alpha+\beta/n}}^{\mathrm{con}}(\rn)$,
then, for any $\gamma\in\zz_+^n$ with $|\gamma|\leq s$,
$$
\lf\|\widetilde{I}_{\beta}(x^{\gamma})
\r\|_{JN_{(p,q_2,s)_{\alpha+\beta/n}}^{\mathrm{con}}(\rn)}
\ls \lf\|x^{\gamma}\r\|_{JN_{(p,q_1,s)_\alpha}^{\mathrm{con}}(\rn)}=0
$$
and hence $\widetilde{I}_{\beta}(x^{\gamma})\in \mathcal{P}_s(\rn)$
after changing values on a set of measure zero.
Using this and Proposition \ref{I-a-x}, we find that
$I_{\beta}$ has the vanishing moments up to order $s$.
This finishes the proof of that (i) $\Rightarrow$ (ii).

Next, we show that (ii) $\Rightarrow$ (i).
Observe that, for any $g\in L^1_{\mathrm{loc}}(\rn)$
and any ball $B$ of $\rn$,
\begin{align*}
\lf[\fint_B\lf|g(x)-P_B^{(s)}(g)(x)
\r|^{q_2}\,dx\r]^{\frac{1}{q_2}}
\sim\inf_{P\in \mathcal{P}_s(B)}
\lf[\fint_B|g(x)-P(x)|^{q_2}\,dx\r]^{\frac{1}{q_2}};
\end{align*}
see, for instance, \cite[(2.12)]{jtyyz1}.
From this, \eqref{I-1}, \eqref{2.27x}, Proposition \ref{I-a-x},
$P_{B}^{(s)}(P)=P$ for any $P\in \mathcal{P}_s(\rn)$
and any ball $B\subset \rn$, and
the Minkowski inequality, we deduce that, for any $r\in(0,\fz)$
and $f\in JN_{(p,q_1,s)_\alpha}^{\mathrm{con}}(\rn)$,
\begin{align}\label{Iw-JN-01}
&\lf[\int_{\rn}
\lf\{|B(z,r)|^{-\alpha-\frac{\beta}{n}}\lf[\fint_{B(z,r)}
\lf|\widetilde{I}_{\beta,B_0}(f)(x)
-P_{B(z,r)}^{(s)}(\widetilde{I}_{\beta,B_0}(f))(x)
\r|^{q_2}\,dx\r]^{\frac{1}{q_2}}\r\}^{p}\,dz\r]^\frac{1}{p}\noz\\
&\quad\lesssim\lf[\int_{\rn}
\lf\{|B(z,r)|^{-\alpha-\frac{\beta}{n}}\lf[\fint_{B(z,r)}
\lf|\widetilde{I}_\beta f(x)
-P_{B(z,r),B_0}^{(1)}(x)-P_{B(z,r),B_0}^{(2)}(x)\r|^{q_2}\,dx
\r]^{\frac{1}{q_2}}\r\}^{p}\,dz\r]^\frac{1}{p}\noz\\
&\quad\sim\lf[\int_{\rn}
\lf\{|B(z,r)|^{-\alpha-\frac{\beta}{n}}\lf[\fint_{B(z,r)}
\lf|F_{B(z,r)}^{(1)}(x)+F_{B(z,r)}^{(2)}(x)\r|^{q_2}\,dx
\r]^{\frac{1}{q_2}}\r\}^{p}\,dz\r]^\frac{1}{p}\noz\\
&\quad\ls\mathrm{Z}_1+\mathrm{Z}_2,
\end{align}
where $F_{B(z,r)}^{(1)}$, $F_{B(z,r)}^{(2)}$, $P_{B(z,r),B_0}^{(1)}$,
and $P_{B(z,r),B_0}^{(2)}$ are, respectively, as in \eqref{2.22x},
\eqref{2.22y}, \eqref{2.22z}, and \eqref{2.22w},
$$
\mathrm{Z}_1:=\lf[\int_{\rn}
\lf\{|B(z,r)|^{-\alpha-\frac{\beta}{n}}\lf[\fint_{B(z,r)}
\lf|F_{B(z,r)}^{(1)}(x)\r|^{q_2}\,dx
\r]^{\frac{1}{q_2}}\r\}^{p}\,dz\r]^\frac{1}{p},
$$
and
$$
\mathrm{Z}_2:=\lf[\int_{\rn}
\lf\{|B(z,r)|^{-\alpha-\frac{\beta}{n}}\lf[\fint_{B(z,r)}
\lf|F_{B(z,r)}^{(2)}(x)\r|^{q_2}\,dx
\r]^{\frac{1}{q_2}}\r\}^{p}\,dz\r]^\frac{1}{p}.
$$

Next, we show that
\begin{align}\label{A1}
\sup_{r\in(0,\fz)}\mathrm{Z}_1
\lesssim \|f\|_{JN_{(p,q_1,s)_\alpha}^{\mathrm{con}}(\rn)}.
\end{align}
To this end, we first prove that, for any given
ball $B(z,r)\subset \rn$ with $z\in\rn$ and $r\in(0,\fz)$,
\begin{align}\label{A1-1}
&\lf|B(z,r)\r|^{-\alpha-\frac{\beta}{n}}
\lf[\fint_{B(z,r)}\lf|F_{B(z,r)}^{(1)}(x)
\r|^{q_2}\,dx\r]^{\frac{1}{q_2}}\noz\\
&\quad\lesssim\lf|2B(z,r)\r|^{-\alpha}
\lf[\fint_{2B(z,r)}\lf|f(x)-P_{2B(z,r)}^{(s)}(f)(x)
\r|^{q_1}\,dx\r]^{\frac{1}{q_1}}
\end{align}
via considering the following three cases on $q_1$ and $q_2$.

\emph{Case i)} $q_1=1$ and $q_2\in[1,\frac{n}{n-\beta})$.
In this case, we have $-(n-\beta)q_2+n-1>-1$. By this,
\eqref{2.22x}, \eqref{size-i}, the Minkowski inequality,
the fact $B(z,r)\subset B(y,4r)$ for any $y\in 2B(z,r)$,
and the H\"older inequality,
we conclude that
\begin{align*}
&\lf|B(z,r)\r|^{-\alpha-\frac{\beta}{n}}
\lf[\fint_{B(z,r)}\lf|F_{B(z,r)}^{(1)}(x)
\r|^{q_2}\,dx\r]^{\frac{1}{q_2}}\\
&\quad\ls\lf|B(z,r)\r|^{-\alpha-\frac{\beta}{n}}
\lf\{\fint_{B(z,r)}\lf[
\int_{2B(z,r)}\frac{|f(y)-P_{2B(z,r)}^{(s)}(f)(y)|}{|x-y|^{n-\beta}}\,dy
\r]^{q_2}\,dx\r\}^{\frac{1}{q_2}}\\
&\quad\lesssim\lf|B(z,r)\r|^{-\alpha-\frac{\beta}{n}-\frac{1}{q_2}}
\int_{2B(z,r)}\lf|f(y)-P_{2B(z,r)}^{(s)}(f)(y)\r|
\lf[\int_{B(z,r)}\frac{1}{|x-y|^{(n-\beta)q_2}}
\,dx\r]^{\frac{1}{q_2}}\,dy\\
&\quad\lesssim\lf|B(z,r)\r|^{-\alpha-\frac{\beta}{n}-\frac{1}{q_2}}
\int_{2B(z,r)}\lf|f(y)-P_{2B(z,r)}^{(s)}(f)(y)\r|
\lf[\int_{B(y,4r)}\frac{1}{|x-y|^{(n-\beta)q_2}}
\,dx\r]^{\frac{1}{q_2}}\,dy\\
&\quad\sim\lf|B(z,r)\r|^{-\alpha-\frac{\beta}{n}-\frac{1}{q_2}}
\int_{2B(z,r)}\lf|f(y)-P_{2B(z,r)}^{(s)}(f)(y)\r|
\lf[\int_0^{4r}t^{-(n-\beta)q_2+n-1}\,dt\r]^{\frac{1}{q_2}}\,dy\\
&\quad\sim\lf|2B(z,r)\r|^{-\alpha}
\fint_{2B(z,r)}\lf|f(y)-P_{2B(z,r)}^{(s)}(f)(y)\r|\,dy\\
&\quad\ls\lf|2B(z,r)\r|^{-\alpha}
\lf[\fint_{2B(z,r)}\lf|f(y)-P_{2B(z,r)}^{(s)}(f)(y)
\r|^{q_1}\,dy\r]^{\frac{1}{q_1}},
\end{align*}
which shows that \eqref{A1-1} holds true in this case.

\emph{Case ii)} $q_1\in(1,\frac{n}{\beta})$
and $q_2\in[1,\frac{nq_1}{n-\beta q_1}]$.
In this case, using the H\"older inequality,
and Lemma \ref{fractional}(ii) with $b:=q_1$,
$\widetilde{b}:=\frac{nq_1}{n-\beta q_1}$, and $g$ replaced by
$[f-P_{2B(z,r)}^{(s)}(f)]\mathbf{1}_{2B(z,r)}$, we have
\begin{align*}
&\lf|B(z,r)\r|^{-\alpha-\frac{\beta}{n}}
\lf[\fint_{B(z,r)}\lf|F_{B(z,r)}^{(1)}(x)
\r|^{q_2}\,dx\r]^{\frac{1}{q_2}}\\
&\quad\leq\lf|B(z,r)\r|^{-\alpha-\frac{\beta}{n}}
\lf[\fint_{B(z,r)}\lf|F_{B(z,r)}^{(1)}(x)
\r|^{\frac{nq_1}{n-\beta q_1}}\,dx\r]^{\frac{n-\beta q_1}{nq_1}}\\
&\quad=\lf|B(z,r)\r|^{-\alpha-\frac{\beta}{n}}
\lf\{\fint_{B(z,r)}\lf|\int_{2B(z,r)}\widetilde{k}_{\beta}(x,y)
\lf[f(y)-P_{2B(z,r)}^{(s)}(f)(y)\r]\,dy
\r|^{\frac{nq_1}{n-\beta q_1}}\,dx\r\}^{\frac{n-\beta q_1}{nq_1}}\\
&\quad\lesssim\lf|B(z,r)\r|^{-\alpha-\frac{1}{q_1}}
\lf[\int_{2B(z,r)}\lf|f(y)-P_{2B(z,r)}^{(s)}(f)(y)
\r|^{q_1}\,dy\r]^{\frac{1}{q_1}}\\
&\quad\sim\lf|2B(z,r)\r|^{-\alpha}
\lf[\fint_{2B(z,r)}\lf|f(y)
-P_{2B(z,r)}^{(s)}(f)(y)\r|^{q_1}\,dy\r]^{\frac{1}{q_1}},
\end{align*}
where $\widetilde{k}_{\beta}$ is as in \eqref{k-w}.
This shows that \eqref{A1-1} holds true in this case.

\emph{Case iii)} $q_1\in[\frac{n}{\beta},\fz)$ and $q_2\in[1,\fz)$.
In this case, choose $q_3\in (1,\frac{n}{\beta})$
such that $q_2\in[1,\frac{nq_3}{n-\beta q_3}]$.
Then, by the H\"older inequality,
Lemma \ref{fractional}(ii) with $b:=q_3$,
$\widetilde{b}:=\frac{nq_3}{n-\beta q_3}$,
and $g$ replaced by $[f-P_{2B(z,r)}^{(s)}(f)]\mathbf{1}_{2B(z,r)}$,
and $q_1>q_3$, we find that
\begin{align*}
&\lf|B(z,r)\r|^{-\alpha-\frac{\beta}{n}}
\lf[\fint_{B(z,r)}\lf|F_{B(z,r)}^{(1)}(x)\r|^{q_2}\,dx\r]^{\frac{1}{q_2}}\\
&\quad\leq\lf|B(z,r)\r|^{-\alpha-\frac{\beta}{n}}
\lf[\fint_{B(z,r)}\lf|F_{B(z,r)}^{(1)}(x)
\r|^{\frac{nq_3}{n-\beta q_3}}\,dx\r]^{\frac{n-\beta q_3}{nq_3}}\\
&\quad=\lf|B(z,r)\r|^{-\alpha-\frac{\beta}{n}}
\lf\{\fint_{B(z,r)}\lf|\int_{2B(z,r)}\widetilde{k}_{\beta}(x,y)
\lf[f(y)-P_{2B(z,r)}^{(s)}(f)(y)\r]\,dy
\r|^{\frac{nq_3}{n-\beta q_3}}\,dx\r\}^{\frac{n-\beta q_3}{nq_3}}\\
&\quad\ls\lf|B(z,r)\r|^{-\alpha}
\lf[\fint_{2B(z,r)}\lf|f(y)-P_{2B(z,r)}^{(s)}(f)(y)
\r|^{q_3}\,dy\r]^{\frac{1}{q_3}}\\
&\quad\lesssim\lf|B(z,r)\r|^{-\alpha}
\lf[\fint_{2B(z,r)}\lf|f(y)-P_{2B(z,r)}^{(s)}(f)(y)
\r|^{q_1}\,dy\r]^{\frac{1}{q_1}},
\end{align*}
which shows that \eqref{A1-1} holds true in this case,
and hence \eqref{A1-1} holds true in Cases i), ii), and iii).

Now, we show \eqref{A1}.
Indeed, from \eqref{A1-1}
and Lemma \ref{AC}, we deduce that
\begin{align*}
\sup_{r\in(0,\fz)}\mathrm{Z}_1
&\lesssim\sup_{r\in(0,\fz)}\lf[\int_{\rn}
\lf\{\lf|2B(z,r)\r|^{-\alpha}\lf[\fint_{2B(z,r)}
\lf|f(x)-P_{2B(z,r)}^{(s)}(f)(x)\r|^{q_1}\,dx
\r]^{\frac{1}{q_1}}\r\}^{p}\,dz\r]^\frac{1}{p}\\
&\sim\|f\|_{JN_{(p,q_1,s)_\alpha}^{\mathrm{con}}(\rn)}.
\end{align*}
This shows that \eqref{A1} holds true.

Finally, we estimate $\mathrm{Z}_2$.
Indeed, by \eqref{I-E-1}, \eqref{JN-I-E}, the Minkowski inequality,
Lemma \ref{AC}, and $\alpha\in(-\fz,\frac{s+\dz-\beta}{n})$, we conclude that
\begin{align*}
\mathrm{Z}_2
&\lesssim\lf\{\int_{\rn}\lf[|B(z,r)|^{-\alpha-\frac{\beta}{n}}
r^{s+\dz}\int_{\rn\setminus 2B(z,r)}
\frac{|f(y)-P_{2B(z,r)}^{(s)}(f)(y)|}{|y-z|^{n+s+\dz-\beta}}\,dy
\r]^{p}\,dz\r\}^\frac{1}{p}\\
&\lesssim \lf[\int_{\rn}\lf\{\sum_{k=1}^{\fz}
\lf(2^kr\r)^{-s-\dz+\beta}
r^{s+\dz}\lf|B(z,r)\r|^{-\alpha-\frac{\beta}{n}}\r.\r.\\
&\quad\lf.\lf.\times\lf[\fint_{2^{k+1}B(z,r)}
\lf|f(y)-P_{2^{k+1}B(z,r)}^{(s)}(f)(y)\r|^{q_1}
\,dy\r]^{\frac{1}{q_1}}\,dz\r\}^{p}\r]^\frac{1}{p}\\
&\lesssim\sum_{k=1}^{\fz}2^{k(-s-\dz+\beta)}\lf[\int_{\rn}\Bigg\{
\lf|B(z,r)\r|^{-\alpha}\r.\\
&\quad\lf.\times\lf[\fint_{2^{k+1}B(z,r)}
\lf|f(y)-P_{2^{k+1}B(z,r)}^{(s)}(f)(y)\r|^{q_1}
\,dy\r]^{\frac{1}{q_1}}\Bigg\}^{p}\,dz\r]^\frac{1}{p}\\
&\sim\sum_{k=1}^{\fz}2^{k(-s-\dz+\beta+\alpha n)}\lf[\int_{\rn}
\Bigg\{\lf|2^{k+1}B(z,r)\r|^{-\alpha}\r.\\
&\quad\lf.\times\lf[\fint_{2^{k+1}B(z,r)}\lf|f(y)
-P_{2^{k+1}B(z,r)}^{(s)}(f)(y)\r|^{q_1}\,dy\r]^{\frac{1}{q_1}}
\Bigg\}^{p}\,dz\r]^\frac{1}{p}\\
&\ls \|f\|_{JN_{(p,q_1,s)_\alpha}^{\mathrm{con}}(\rn)},
\end{align*}
where the implicit positive constants are independent of $r$.
From this, \eqref{Iw-JN-01}, \eqref{A1},
and Lemma \ref{AC}, we deduce that, for any
$f\in JN_{(p,q_1,s)_\alpha}^{\mathrm{con}}(\rn)$,
\begin{align*}
\lf\|\widetilde{I}_{\beta}(f)
\r\|_{JN_{(p,q_2,s)_{\alpha+\beta/n}}^{\mathrm{con}}(\rn)}
\ls \|f\|_{JN_{(p,q_1,s)_{\alpha}}^{\mathrm{con}}(\rn)}.
\end{align*}
This finishes the proof of that (ii) $\Rightarrow$ (i)
and hence of Theorem \ref{Iw-JN-bound}.
\end{proof}

\begin{rem}\label{rem-dual-I}
To the best of our knowledge,
Theorem \ref{Iw-JN-bound} is new even for the Campanato space
$\mathcal{C}_{\alpha,q,s}(\mathbb{R}^n)=JN_{(\fz,q,s)_\alpha}^{\mathrm{con}}(\rn)$ with $s\in\mathbb{N}$.	
Moreover, letting $p=\fz$, $s=0$, and $I_{\beta}:=\mathcal{I}_{\beta}$
as in \eqref{cla-I} with $\beta\in(0,1)$,
then, in this case, Theorem \ref{Iw-JN-bound}
is a special case of \cite[Theorem 5.1]{N10}.
\end{rem}

The following conclusion is an immediate
corollary of Theorem \ref{Iw-JN-bound};
we omit the details here.

\begin{cor}\label{Iw-JN-bound'}
Let $p\in[1,\fz]$, $q\in[1,\fz)$, $s\in\zz_+$, $\dz\in(0,1]$,
$\beta\in (0,\dz)$, $\alpha\in (-\fz,\frac{s+\dz-\beta}{n})$,
$k_{\beta}$ be an $s$-order fractional kernel with regularity $\dz$,
and $I_\beta$ the fractional integral with kernel $k_{\beta}$.
Let $\widetilde{k}_{\beta}$ be the adjoint kernel of $k_{\beta}$ as in \eqref{k-w},
and $\widetilde{I}_{\beta}$ as in Definition \ref{I-w} and Remark \ref{Rem-Iw}
with kernel $\widetilde{k}_{\beta}$. Then
$\widetilde{I}_{\beta}$ is bounded from
$JN_{(p,q,s)_{\alpha}}^{\mathrm{con}}(\rn)$ to
$JN_{(p,q,s)_{\alpha+\beta/n}}^{\mathrm{con}}(\rn)$, namely,
there exists a positive constant $C$ such that,
for any $f\in JN_{(p,q,s)_\alpha}^{\mathrm{con}}(\rn)$,
\begin{align*}
\lf\|\widetilde{I}_{\beta}(f)
\r\|_{JN_{(p,q,s)_{\alpha+\beta/n}}^{\mathrm{con}}(\rn)}
\leq C\|f\|_{JN_{(p,q,s)_{\alpha}}^{\mathrm{con}}(\rn)}
\end{align*}
if and only if $I_{\beta}$ has vanishing moments up to order $s$.
\end{cor}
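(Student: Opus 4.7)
The plan is to deduce Corollary \ref{Iw-JN-bound'} directly from Theorem \ref{Iw-JN-bound} by specializing $q_1=q_2=q$ and verifying that, regardless of which of the three regimes on $q_1$ the value of $q$ falls into, the corresponding constraint on $q_2$ is automatically satisfied. The only work is a short case analysis; no new estimates are required.

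First, I would check the three cases separately. If $q=1$, then in the role of $q_1$ we are in the first regime and we need $q_2=1\in[1,\frac{n}{n-\beta})$, which holds because $\beta\in(0,\dz)\subset(0,1)$ gives $\frac{n}{n-\beta}>1$. If $q\in(1,\frac{n}{\beta})$, then in the role of $q_1$ we are in the second regime and we need $q_2=q\in[1,\frac{nq}{n-\beta q}]$; since $\beta q>0$, we have $\frac{nq}{n-\beta q}>q$, so this holds. If $q\in[\frac{n}{\beta},\fz)$, then in the role of $q_1$ we are in the third regime and any $q_2\in[1,\fz)$ is admissible, so in particular $q_2=q$ works. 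Therefore, for every $q\in[1,\fz)$, the pair $(q_1,q_2)=(q,q)$ falls into the scope of Theorem \ref{Iw-JN-bound}.

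Next, I would apply Theorem \ref{Iw-JN-bound} with $(q_1,q_2)=(q,q)$ under the standing hypotheses on $p$, $s$, $\dz$, $\beta$, $\alpha$, and $k_\beta$ that the corollary inherits from the theorem. The equivalence in that theorem then yields exactly the claim: $\widetilde{I}_\beta$ is bounded from $JN_{(p,q,s)_{\alpha}}^{\mathrm{con}}(\rn)$ to $JN_{(p,q,s)_{\alpha+\beta/n}}^{\mathrm{con}}(\rn)$ if and only if $I_\beta$ has vanishing moments up to order $s$, with the same constant control.

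There is no genuine obstacle here; the only point that might look suspicious is the second case, where one needs to confirm that $q$ lies in the closed interval $[1,\frac{nq}{n-\beta q}]$ rather than a proper subinterval, but this is immediate from $\beta q>0$. Thus the corollary is, as remarked in the text, an immediate consequence of Theorem \ref{Iw-JN-bound}.
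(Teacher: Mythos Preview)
Your proposal is correct and matches the paper's approach: the paper states that the corollary is ``an immediate corollary of Theorem~\ref{Iw-JN-bound}'' and omits the details, and your case check that the pair $(q_1,q_2)=(q,q)$ always satisfies the hypotheses of Theorem~\ref{Iw-JN-bound} is exactly those omitted details. There is nothing to add.
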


Moreover, recall that, for any given $v\in(0,1)$
and for any $\{a_j\}_{j\in\zz}\subset \cc$,
\begin{align}\label{lv}
\lf(\sum_{j\in\zz}|a_j|\r)^{v}\leq \sum_{j\in\zz}|a_j|^{v};
\end{align}
see, for instance, \cite[Proposition 1.5]{stein2011}.
Using this and Corollary \ref{Iw-JN-bound'},
we have the following conclusion.

\begin{cor}\label{Iw-JN-bound''}
Let $\dz\in(0,1]$, $\beta\in (0,\dz)$,
$p\in[1,\frac{n}{\beta})$,
$\frac{1}{\widetilde{p}}:=\frac{1}{p}-\frac{\beta}{n}$,
$q\in[1,\fz)$, $s\in\zz_+$,
$\alpha\in (-\fz,\frac{s+\dz-\beta}{n})$,
$k_{\beta}$ be an $s$-order fractional kernel with regularity $\dz$,
and $I_\beta$ the fractional integral with kernel $k_{\beta}$.
Let $\widetilde{k}_{\beta}$ be the adjoint kernel of $k_{\beta}$ as in \eqref{k-w},
and $\widetilde{I}_{\beta}$ as in Definition \ref{I-w} and Remark \ref{Rem-Iw}
with kernel $\widetilde{k}_{\beta}$. Then
$\widetilde{I}_{\beta}$ is bounded from
$JN_{(p,q,s)_{\alpha}}^{\mathrm{con}}(\rn)$ to
$JN_{(\widetilde{p},q,s)_{\alpha}}^{\mathrm{con}}(\rn)$, namely,
there exists a positive constant $C$ such that,
for any $f\in JN_{(p,q,s)_\alpha}^{\mathrm{con}}(\rn)$,
\begin{align*}
\lf\|\widetilde{I}_{\beta}(f)
\r\|_{JN_{(\widetilde{p},q,s)_{\alpha}}^{\mathrm{con}}(\rn)}
\leq C\|f\|_{JN_{(p,q,s)_{\alpha}}^{\mathrm{con}}(\rn)}
\end{align*}
if and only if $I_{\beta}$ has the vanishing moments up to order $s$.
\end{cor}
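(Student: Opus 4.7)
The plan is to deduce this corollary from Corollary \ref{Iw-JN-bound'} by factoring the boundedness through the intermediate space $JN_{(p,q,s)_{\alpha+\beta/n}}^{\mathrm{con}}(\rn)$. The key algebraic identity that makes everything work is
\begin{align*}
1-\frac{p\beta}{n}=\frac{p}{\widetilde{p}}\in(0,1),
\end{align*}
which follows directly from $\frac{1}{\widetilde{p}}=\frac{1}{p}-\frac{\beta}{n}$ together with $p\in[1,\frac{n}{\beta})$. This exponent relationship will allow \eqref{lv} to convert a sum raised to the $\widetilde{p}$-power into a sum involving the $p$-power with the extra $|Q_j|^{-\beta/n}$ factor absorbed into a shift of the Campanato index $\alpha$ by $\beta/n$.

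For the sufficiency direction, assuming $I_\beta$ has the vanishing moments up to order $s$, I would first prove the continuous embedding
\begin{align*}
JN_{(p,q,s)_{\alpha+\beta/n}}^{\mathrm{con}}(\rn)\hookrightarrow
JN_{(\widetilde{p},q,s)_{\alpha}}^{\mathrm{con}}(\rn).
\end{align*}
Indeed, for any collection $\{Q_j\}_j\in\Pi_\ell(\rn)$, writing
$s_j:=[\fint_{Q_j}|f(x)-P_{Q_j}^{(s)}(f)(x)|^q\,dx]^{1/q}$ and applying \eqref{lv} with $v:=p/\widetilde{p}\in(0,1)$ to the sequence $\{|Q_j|\,(|Q_j|^{-\alpha}s_j)^{\widetilde{p}}\}_j$, I obtain
\begin{align*}
\lf[\sum_j|Q_j|\lf(|Q_j|^{-\alpha}s_j\r)^{\widetilde{p}}\r]^{p/\widetilde{p}}
&\le\sum_j|Q_j|^{p/\widetilde{p}}\lf(|Q_j|^{-\alpha}s_j\r)^{p}
=\sum_j|Q_j|\lf(|Q_j|^{-\alpha-\beta/n}s_j\r)^{p},
\end{align*}
where in the last step I use $|Q_j|^{p/\widetilde{p}-1}=|Q_j|^{-p\beta/n}$. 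Taking the supremum over $\ell$ and $\{Q_j\}_j$ on both sides yields the embedding. Composing this embedding with the boundedness of $\widetilde{I}_\beta$ from $JN_{(p,q,s)_\alpha}^{\mathrm{con}}(\rn)$ to $JN_{(p,q,s)_{\alpha+\beta/n}}^{\mathrm{con}}(\rn)$ provided by Corollary \ref{Iw-JN-bound'} gives the desired boundedness.

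For the necessity direction, I would argue exactly as in the proof of the implication (i) $\Rightarrow$ (ii) of Theorem \ref{Iw-JN-bound}: for any $\gamma\in\zz_+^n$ with $|\gamma|\le s$, the polynomial $x^\gamma$ satisfies $\|x^\gamma\|_{JN_{(p,q,s)_\alpha}^{\mathrm{con}}(\rn)}=0$, so the assumed boundedness forces $\|\widetilde{I}_\beta(x^\gamma)\|_{JN_{(\widetilde{p},q,s)_\alpha}^{\mathrm{con}}(\rn)}=0$, hence $\widetilde{I}_\beta(x^\gamma)\in\mathcal{P}_s(\rn)$ after modification on a set of measure zero; Proposition \ref{I-a-x} then yields the vanishing moments up to order $s$. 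I do not expect any serious obstacle in this corollary: the only non-routine ingredient is keeping the exponent bookkeeping $1-p\beta/n=p/\widetilde{p}$ straight so that the application of \eqref{lv} matches the shift from $\alpha$ to $\alpha+\beta/n$; everything else is a direct appeal to previously established results.
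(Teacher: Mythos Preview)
Your proposal is correct and follows essentially the same route as the paper: both directions coincide with the paper's argument, using \eqref{lv} with $v=p/\widetilde{p}\in(0,1)$ together with the exponent identity $|Q_j|^{p/\widetilde{p}}=|Q_j|\cdot|Q_j|^{-p\beta/n}$ to pass from the $(\widetilde{p},\alpha)$-norm to the $(p,\alpha+\beta/n)$-norm, and then invoking Corollary~\ref{Iw-JN-bound'}. The only cosmetic difference is that you isolate the embedding $JN_{(p,q,s)_{\alpha+\beta/n}}^{\mathrm{con}}(\rn)\hookrightarrow JN_{(\widetilde{p},q,s)_{\alpha}}^{\mathrm{con}}(\rn)$ as a separate statement, whereas the paper performs the same computation directly on $\widetilde{I}_\beta(f)$.
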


\begin{proof}
Let $\dz$, $\beta$, $p$, $q$, $\widetilde{p}$, $s$, $\alpha$,
$k_{\beta}$, $I_{\beta}$, $\widetilde{k}_{\beta}$, and
$\widetilde{I}_{\beta}$ be as in the present corollary.
We first show the necessity.
Indeed, if $\widetilde{I}_{\beta}$
is bounded from $JN_{(p,q,s)_{\alpha}}^{\mathrm{con}}(\rn)$
to $JN_{(\widetilde{p},q,s)_{\alpha}}^{\mathrm{con}}(\rn)$,
then, for any $\gamma\in\zz_+^n$ with $|\gamma|\leq s$,
$$
\lf\|\widetilde{I}_{\beta}(x^{\gamma})
\r\|_{JN_{(\widetilde{p},q,s)_{\alpha}}^{\mathrm{con}}(\rn)}
\ls \lf\|x^{\gamma}\r\|_{JN_{(p,q,s)_{\alpha}}^{\mathrm{con}}(\rn)}=0,
$$
which further implies that
$\widetilde{I}_{\beta}(x^{\gamma})\in \mathcal{P}_s(\rn)$
after changing values on a set of measure zero.
By this and Proposition \ref{I-a-x}, we conclude that
$I_{\beta}$ has vanishing moments up to order $s$.
This finishes the proof of the necessity.

Now, we show the sufficiency.
Using \eqref{lv} with $v:=\frac{p}{\widetilde{p}}\in(0,1)$,
$\frac{1}{\widetilde{p}}:=\frac{1}{p}-\frac{\beta}{n}$,
and Corollary \ref{Iw-JN-bound'}, we conclude that,
for any $f\in JN_{(p,q,s)_{\alpha}}^{\mathrm{con}}(\rn)$
and any interior pairwise disjoint subcubes $\{Q_{j}\}_j$ of $\rn$
with the same edge length,
\begin{align*}
&\lf[\sum_{j}|Q_j|\lf\{|Q_j|^{-\alpha}\lf[\fint_{Q_j}
\lf|\widetilde{I}_{\beta}(f)(x)
-P_{Q_j}^{(s)}\lf(\widetilde{I}_{\beta}(f)\r)(x)\r|^q\,dx\r]^\frac{1}{q}
\r\}^{\widetilde{p}}\r]^{\frac{1}{\widetilde{p}}}\\
&\quad\leq\lf[\sum_{j}|Q_j|^{\frac{p}{\widetilde{p}}}
\lf\{|Q_j|^{-\alpha}\lf[\fint_{Q_j}
\lf|\widetilde{I}_{\beta}(f)(x)
-P_{Q_j}^{(s)}\lf(\widetilde{I}_{\beta}(f)\r)(x)\r|^q\,dx\r]^\frac{1}{q}
\r\}^{p}\r]^{\frac{1}{p}}\\
&\quad=\lf[\sum_{j}|Q_j|
\lf\{|Q_j|^{-\alpha-\frac{\beta}{n}}\lf[\fint_{Q_j}
\lf|\widetilde{I}_{\beta}(f)(x)
-P_{Q_j}^{(s)}\lf(\widetilde{I}_{\beta}(f)\r)(x)\r|^q\,dx\r]^\frac{1}{q}
\r\}^{p}\r]^{\frac{1}{p}}\\
&\quad\ls\lf\|\widetilde{I}_{\beta}(f)
\r\|_{JN_{(p,q,s)_{\alpha+\beta/n}}^{\mathrm{con}}(\rn)}
\ls\|f\|_{JN_{(p,q,s)_{\alpha}}^{\mathrm{con}}(\rn)}.
\end{align*}
This further implies that
\begin{align*}
\lf\|\widetilde{I}_{\beta}(f)\r
\|_{JN_{(\widetilde{p},q,s)_{\alpha}}^{\mathrm{con}}(\rn)}
\ls\|f\|_{JN_{(p,q,s)_{\alpha}}^{\mathrm{con}}(\rn)},
\end{align*}
which finishes the proof of
the sufficiency and hence of Corollary \ref{Iw-JN-bound''}.
\end{proof}

\noindent \\[4mm]

\section{Boundedness of fractional integrals
on $HK_{(p,q,s)_\alpha}^{\mathrm{con}}(\rn)$\label{S-C-Z-HK}}

In this section, we prove that $I_{\beta}$ can be extended
to a unique continuous linear operator from
$HK_{(p,q,s)_{\alpha+\beta/n}}^{\mathrm{con}}(\rn)$
to $HK_{(p,q,s)_{\alpha}}^{\mathrm{con}}(\rn)$.
To this end, we skillfully use properties of molecules of
$HK_{(p,q,s)_\alpha}^{\mathrm{con}}(\rn)$
and a criterion for the boundedness of linear operators on
$HK_{(p,q,s)_\alpha}^{\mathrm{con}}(\mathbb{R}^n)$,
obtained in \cite[Theorem 3.16]{jtyyz2},
to overcome the difficulty caused by the fact
that $\|\cdot\|_{HK_{(p,q,s)_{\alpha}}^{\mathrm{con}}(\mathbb{R}^n)}$
is not concave.

\subsection{Hardy-type spaces and molecules\label{Hardy-type}}

We first recall the following notion of the Hardy-kind space
$HK_{(p,q,s)_\alpha}^{\mathrm{con}}(\rn)$
which was introduced in \cite[Section 4]{jtyyz1}.
\begin{defn}\label{d3.2}
Let $p\in(1,\infty)$, $q\in(1,\infty]$, $s\in\zz_+$, and $\alpha\in\rr$.
A measurable function $a$ on $\rn$ is called a $(p,q,s)_\alpha$-\emph{atom}
supported in a cube $Q\subset \rn$ if
\begin{enumerate}
\item[\rm(i)] $\supp\,(a)\subset Q$;
\item[\rm(ii)] $\|a\|_{L^q(Q)}\leq|Q|^{\frac{1}{q}-\frac{1}{p}-\alpha}$;
\item[\rm(iii)] $\int_Qa(x)x^{\gamma}\,dx=0$
for any $\gamma\in\zz_+^n$ with $|\gamma|\leq s$.
\end{enumerate}
\end{defn}

Recall that, for any $\ell\in(0,\fz)$,
$\Pi_{\ell}(\rn)$ denotes the class of all collections of
interior pairwise disjoint subcubes
$\{Q_j\}_j$ of $\rn$ with edge length $\ell$.
Moreover, we use $(JN_{(p',q',s)_\alpha}^{\mathrm{con}}(\rn))^*$
to denote the dual space
of $JN_{(p',q',s)_\alpha}^{\mathrm{con}}(\rn)$,
which is defined to be the set of all
continuous linear functionals on
$JN_{(p',q',s)_\alpha}^{\mathrm{con}}(\rn)$
equipped with the weak-$\ast$ topology.

\begin{defn}\label{d3.3}
Let $p\in(1,\infty)$, $q\in(1,\infty]$, $s\in\zz_+$, and $\alpha\in\rr$.
The \emph{space of congruent $(p,q,s)_\alpha$-polymers},
$\widetilde{HK}_{(p,q,s)_\alpha}^{\mathrm{con}}(\rn)$,
is defined to be the set of all
$g\in (JN_{(p',q',s)_\alpha}^{\mathrm{con}}(\rn))^*$ satisfying that
there exist an $\ell\in(0,\fz)$,
$(p,q,s)_{\alpha}$-atoms $\{a_j\}_j$ supported, respectively, in
$\{Q_j\}_j\in \Pi_{\ell}(\rn)$, and $\{\lambda_j\}_j\subset \cc$ with $\sum_{j}|\lambda_j|^p<\infty$
such that $g=\sum_{j}\lambda_ja_j$ in
$(JN_{(p',q',s)_\alpha}^{\mathrm{con}}(\rn))^*$,
where $\frac{1}{p}+\frac{1}{p'}=1=\frac{1}{q}+\frac{1}{q'}$.
Moreover, any $g\in\widetilde{HK}_{(p,q,s)_\alpha}^{\mathrm{con}}(\rn)$
is called a \emph{congruent $(p,q,s)_\alpha$-polymer} with its norm
$\|g\|_{\widetilde{HK}_{(p,q,s)_\alpha}^{\mathrm{con}}(\rn)}$ defined by setting
$$\|g\|_{\widetilde{HK}_{(p,q,s)_\alpha}^{\mathrm{con}}(\rn)}
:=\inf\lf(\sum_{j}|\lambda_j|^p\r)^{\frac{1}{p}},$$
where the infimum is taken over all decompositions of $g$ as above.
\end{defn}

\begin{defn}\label{d3.6}
Let $p\in(1,\infty)$, $q\in(1,\infty]$, $s\in\zz_+$, and $\alpha\in\rr$.
The \emph{Hardy-type space via congruent cubes},
$HK_{(p,q,s)_\alpha}^{\mathrm{con}}(\rn)$, is defined by setting
\begin{align*}
HK_{(p,q,s)_\alpha}^{\mathrm{con}}(\rn)
:=\lf\{g\in(JN_{(p',q',s)_\alpha}^{\mathrm{con}}(\rn))^*
:\ g=\sum_ig_i\text{ in }(JN_{(p',q',s)_\alpha}^{\mathrm{con}}(\rn))^*,\r.\\
\quad\lf.\{g_i\}_i\subset\widetilde{HK}_{(p,q,s)_\alpha}^{\mathrm{con}}(\rn),
\text{ and }\sum_i\|g_i
\|_{\widetilde{HK}_{(p,q,s)_\alpha}^{\mathrm{con}}(\rn)}<\infty\r\},
\end{align*}
where $\frac{1}{p}+\frac{1}{p'}=1=\frac{1}{q}+\frac{1}{q'}$.
Moreover, for any $g\in HK_{(p,q,s)_\alpha}^{\mathrm{con}}(\rn)$, let
$$\|g\|_{HK_{(p,q,s)_\alpha}^{\mathrm{con}}(\rn)}
:=\inf\sum_i\|g_i\|_{\widetilde{HK}_{(p,q,s)_\alpha}^{\mathrm{con}}(\rn)},$$
where the infimum is taken over all decompositions of $g$ as above.
\end{defn}

\begin{defn}\label{d3.5}
Let $p\in(1,\infty)$, $q\in(1,\infty]$, $s\in\zz_+$, and $\alpha\in\rr$.
The \emph{finite atomic Hardy-type space via congruent cubes},
$HK_{(p,q,s)_\alpha}^{\mathrm{con-fin}}(\rn)$, is defined to be the set of all
\begin{align*}
g=\sum_{j=1}^M\lambda_ja_j
\end{align*}
pointwisely, where $M\in\nn$, $\{a_j\}_{j=1}^M$
are $(p,q,s)_{\alpha}$-atoms supported, respectively,
in cubes $\{Q_j\}_{j=1}^M$ of $\rn$,
and $\{\lambda_j\}_{j=1}^M\subset \mathbb{C}$.
\end{defn}

The following conclusion is an immediate
corollary of \cite[Theorem 4.10]{jtyyz1}
and \cite[Proposition 2.7]{jtyyz1}; we omit the details here.

\begin{lem}\label{t3.9}
Let $p$, $q\in(1,\infty)$,
$\frac{1}{p}+\frac{1}{p'}=1=\frac{1}{q}+\frac{1}{q'}$,
$s\in\zz_+$, and $\alpha\in\rr$.
Then $$\lf(HK_{(p',q',s)_{\alpha}}^{\mathrm{con}}(\rn)\r)^*
=JN_{(p,q,s)_{\alpha}}^{\mathrm{con}}(\rn)$$
with equivalent norms in the following sense:
\begin{enumerate}
\item[\rm (i)] any given
$f\in JN_{(p,q,s)_{\alpha}}^{\mathrm{con}}(\rn)$
induces a linear functional $\mathcal{L}_f$ given by setting,
for any $g\in HK_{(p',q',s)_{\alpha}}^{\mathrm{con}}(\rn)$
and $\{g_i\}_i\subset
\widetilde{HK}_{(p',q',s)_{\alpha}}^{\mathrm{con}}(\rn)$
with $g=\sum_i g_i$ in $(JN_{(p,q,s)_{\alpha}}^{\mathrm{con}}(\rn))^*$,
\begin{align*}
\mathcal{L}_f(g):=\langle g,f\rangle
=\sum_i\langle g_i,f\rangle.
\end{align*}
Moreover,
for any $g\in HK_{(p',q',s)_{\alpha}}^{\mathrm{con-fin}}(\rn)$,
$\mathcal{L}_f(g)=\int_{\rn}f(x)g(x)\,dx$
and there exists a positive constant $C$ such that
\begin{align*}
\lf\|\mathcal{L}_f\r
\|_{(HK_{(p',q',s)_{\alpha}}^{\mathrm{con}}(\rn))^*}
\leq C\|f\|_{JN_{(p,q,s)_{\alpha}}^{\mathrm{con}}(\rn)};
\end{align*}
\item[\rm (ii)] conversely, for any continuous
linear functional $\mathcal{L}$ on
$HK_{(p',q',s)_{\alpha}}^{\mathrm{con}}(\rn)$,
there exists a unique
$f\in JN_{(p,q,s)_{\alpha}}^{\mathrm{con}}(\rn)$ such that,
for any $g\in HK_{(p',q',s)_{\alpha}}^{\mathrm{con-fin}}(\rn)$,
$\mathcal{L}(g)=\int_{\rn}f(x)g(x)\,dx$
and there exists a positive constant $C$ such that
$$
\|f\|_{JN_{(p,q,s)_{\alpha}}^{\mathrm{con}}(\rn)}
\leq C\|\mathcal{L}\|_{(HK_{(p',q',s)_{\alpha}}^{\mathrm{con}}(\rn))^*}.
$$
\end{enumerate}
\end{lem}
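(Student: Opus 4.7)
The plan is to read off the lemma from the duality theorem \cite[Theorem 4.10]{jtyyz1} combined with the integral representation \cite[Proposition 2.7]{jtyyz1}. Concretely, the first cited result identifies $(HK_{(p',q',s)_\alpha}^{\mathrm{con}}(\rn))^*$ with $JN_{(p,q,s)_\alpha}^{\mathrm{con}}(\rn)$ together with both sides of the norm equivalence, so it supplies, for each $f\in JN_{(p,q,s)_\alpha}^{\mathrm{con}}(\rn)$, a bounded linear functional $\mathcal{L}_f$ on $HK_{(p',q',s)_\alpha}^{\mathrm{con}}(\rn)$ with $\|\mathcal{L}_f\|\ls\|f\|_{JN_{(p,q,s)_\alpha}^{\mathrm{con}}(\rn)}$ and, conversely, gives assertion (ii). The series identity $\mathcal{L}_f(g)=\sum_i\langle g_i,f\rangle$ for any decomposition $g=\sum_i g_i$ in $(JN_{(p,q,s)_\alpha}^{\mathrm{con}}(\rn))^*$ then drops out from the continuity of $\mathcal{L}_f$ on $HK_{(p',q',s)_\alpha}^{\mathrm{con}}(\rn)$ together with the convergence of partial sums built into Definition \ref{d3.6}.

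What still needs verification is the concrete integral formula $\mathcal{L}_f(g)=\int_{\rn}f(x)g(x)\,dx$ on $HK_{(p',q',s)_\alpha}^{\mathrm{con-fin}}(\rn)$. Given $g=\sum_{j=1}^M\lambda_j a_j$ with each $a_j$ a $(p',q',s)_\alpha$-atom supported in a cube $Q_j$, I would first check that the pointwise integral converges: by the vanishing moments of $a_j$ and \eqref{pq},
\begin{align*}
\int_{\rn}a_j(x)f(x)\,dx
=\int_{Q_j}a_j(x)\lf[f(x)-P_{Q_j}^{(s)}(f)(x)\r]\,dx,
\end{align*}
whose modulus is at most $\|a_j\|_{L^{q'}(Q_j)}\|f-P_{Q_j}^{(s)}(f)\|_{L^q(Q_j)}$ by H\"older; both factors are finite by the atom size bound in Definition \ref{d3.2}(ii) and by $f\in L^q_{\mathrm{loc}}(\rn)$. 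The identity $\mathcal{L}_f(a_j)=\int_{\rn}a_j(x)f(x)\,dx$ is exactly what \cite[Proposition 2.7]{jtyyz1} provides, and finite linearity in $j$ upgrades this to arbitrary $g\in HK_{(p',q',s)_\alpha}^{\mathrm{con-fin}}(\rn)$. Uniqueness of the representing $f$ in (ii) then follows because the pointwise pairing on the finite atomic subspace already determines the functional.

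The step I expect to require the most care is matching the abstract dual pairing coming from \cite[Theorem 4.10]{jtyyz1} with the concrete integral on finite atomic elements; this is precisely what \cite[Proposition 2.7]{jtyyz1} is designed to do, so once it has been invoked there is essentially no new estimate to produce. The remaining bookkeeping is simply to check that the constants in the norm equivalence are absorbed by the two halves of Theorem 4.10 of \cite{jtyyz1}, and that the pairing formulas displayed in (i) and (ii) are consistent on the common space $HK_{(p',q',s)_\alpha}^{\mathrm{con-fin}}(\rn)$.
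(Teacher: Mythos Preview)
Your proposal is correct and follows exactly the paper's approach: the paper states that Lemma \ref{t3.9} ``is an immediate corollary of \cite[Theorem 4.10]{jtyyz1} and \cite[Proposition 2.7]{jtyyz1}'' and omits the details, so your fleshing out of how these two citations combine is precisely what was intended.
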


Also, recall the following notion of the
$(p,q,s,\alpha,\epsilon)$-molecule
of $HK_{(p,q,s)_\alpha}^{\mathrm{con}}(\rn)$
introduced in \cite[Definition 3.10]{jtyyz2}.
In what follows, for any $z\in\rn$ and $r\in(0,\fz)$,
$Q_z(r)$ denotes the cube with center $z$ and edge length $r$.

\begin{defn}\label{Def-mole}
Let $p\in (1,\fz)$, $q\in(1,\fz]$, $s\in\zz_+$,
$\alpha\in(\frac{1}{q}-\frac{1}{p},\fz)$,
and $\epsilon\in (0,\fz)$.
A measurable function $M$ on $\rn$ is called
a \textit{$(p,q,s,\alpha,\epsilon)$-molecule}
centered at the cube $Q_z(r)$ with center $z\in\rn$
and edge length $r\in(0,\fz)$ if
\begin{enumerate}
\item[\rm (i)] $\|M\mathbf{1}_{Q_z(r)}\|_{L^q(\rn)}
\leq |Q_z(r)|^{\frac{1}{q}-\frac{1}{p}-\alpha}$;
\item[\rm (ii)] for any $j\in \nn$, $\|M\mathbf{1}_{Q_z(2^{j}r)
\setminus Q_z(2^{j-1}r)}\|_{L^q(\rn)}
\leq 2^{\frac{jn}{\epsilon}(\frac{1}{q}-\frac{1}{p}-\alpha)}
|Q_z(r)|^{\frac{1}{q}-\frac{1}{p}-\alpha}$;
\item[\rm (iii)] $\int_{\rn}M(x)x^{\gamma}\,dx=0$
for any $\gamma\in\zz_+^n$ with $|\gamma|\leq s$.
\end{enumerate}
\end{defn}

Next, we recall two crucial lemmas,
namely, Lemmas \ref{M-JN1} and \ref{Bounded-HK-A} below,
which were proved in \cite[Propositions 3.11 and 3.15, and Theorem 3.16]{jtyyz2}. The first lemma
shows that a $(p,q,s,\alpha,\epsilon)$-molecule
induces an element of $(JN_{(p',q',s)_\alpha}^{\mathrm{con}}(\rn))^*$.

\begin{lem}\label{M-JN1}
Let $p\in (1,\fz)$, $q\in(1,\fz]$,
$\frac{1}{p}+\frac{1}{p'}=1=\frac{1}{q}+\frac{1}{q'}$, $s\in\zz_+$,
$\alpha\in(\frac{1}{q}-\frac{1}{p},\fz)$,
$\epsilon\in(0,1)$ be such that
$\frac{1}{\epsilon}
(\frac{1}{q}-\frac{1}{p}-\alpha)+\frac{1}{q'}+\frac{s}{n}<0$,
and $M$ a $(p,q,s,\alpha,\epsilon)$-molecule.
Then, for any $f\in JN_{(p',q',s)_\alpha}^{\mathrm{con}}(\rn)$,
$fM$ is integrable,
$$\langle M,f\rangle:=\int_{\rn}f(x)M(x)\,dx$$
induces a continuous linear functional on
$JN_{(p',q',s)_\alpha}^{\mathrm{con}}(\rn)$,
$M\in HK_{(p,q,s)_\alpha}^{\mathrm{con}}(\rn)$,
and $\|M\|_{HK_{(p,q,s)_\alpha}^{\mathrm{con}}(\rn)}\leq C$,
where the positive constant $C$ is independent of $M$.
\end{lem}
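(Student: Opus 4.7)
The plan is to work with the natural dyadic annular decomposition around the molecule's center. Fix $M$ a $(p,q,s,\alpha,\epsilon)$-molecule centered at $Q_z(r)$, set $Q_j := Q_z(2^j r)$ for $j\in\zz_+$, let $A_0 := Q_0$ and $A_j := Q_j\setminus Q_{j-1}$ for $j\geq 1$, and write $\mu := \frac{1}{q}-\frac{1}{p}-\alpha$, which is strictly negative by the hypothesis $\alpha>\frac{1}{q}-\frac{1}{p}$.

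For the first claim, I would use the vanishing moments of $M$ to replace $f$ by $f-P_{Q_0}^{(s)}(f)$ in the integral and split
\[
\int_{\rn}f(x)M(x)\,dx=\sum_{j\geq 0}\int_{A_j}\bigl[f(x)-P_{Q_0}^{(s)}(f)(x)\bigr]M(x)\,dx,
\]
bounding each piece by the H\"older inequality. The molecule's annular size estimate controls $\|M\mathbf{1}_{A_j}\|_{L^q(\rn)}$ by $2^{jn\mu/\epsilon}|Q_0|^{\mu}$. The companion factor $\|f-P_{Q_0}^{(s)}(f)\|_{L^{q'}(A_j)}$ is handled by a telescoping estimate along $Q_0\subset Q_1\subset\cdots\subset Q_j$: each difference $P_{Q_i}^{(s)}(f)-P_{Q_{i-1}}^{(s)}(f)$ is a polynomial of degree $\leq s$ whose $L^{q'}$-oscillation on $Q_{i-1}$ is controlled by $|Q_i|^{1/q'+\alpha-1/p'}\|f\|_{JN_{(p',q',s)_\alpha}^{\mathrm{con}}(\rn)}$ via the JN norm (using the single-cube bound contained in Lemma \ref{AC}), and a polynomial reverse H\"older (inverse Markov) step lifts this control from $Q_{i-1}$ to $Q_j$ at the cost of a factor $2^{(j-i)s}$. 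Summing the resulting geometric series in $j$ converges precisely under the hypothesis $\mu/\epsilon+1/q'+s/n<0$, delivering $|\int fM|\leq C\|f\|_{JN_{(p',q',s)_\alpha}^{\mathrm{con}}(\rn)}$, which gives both the integrability of $fM$ and the continuity of $\langle M,f\rangle$ on $JN_{(p',q',s)_\alpha}^{\mathrm{con}}(\rn)$.

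For the second claim, I would construct an atomic decomposition $M=\sum_{j\geq 0}\lambda_j a_j$ in which each $a_j$ is a $(p,q,s)_\alpha$-atom supported in $Q_j$, so that each $\lambda_j a_j$ is by itself a single-atom congruent polymer of edge length $2^j r$. Definition \ref{d3.6} then immediately yields $\|M\|_{HK_{(p,q,s)_\alpha}^{\mathrm{con}}(\rn)}\leq\sum_j|\lambda_j|$. The construction starts from the annular pieces $M\mathbf{1}_{A_j}$ and adds polynomial corrections of degree $\leq s$ chosen so that each $a_j$ acquires vanishing moments and remains supported in $Q_j$; the global hypothesis $\int_{\rn}M(x)x^\gamma\,dx=0$ is exactly what makes these correction polynomials telescope, so that $\sum_j\lambda_j a_j=M$ holds pointwise (and hence in $(JN_{(p',q',s)_\alpha}^{\mathrm{con}}(\rn))^*$ by Part~1). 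Normalising $\|a_j\|_{L^q(Q_j)}\leq|Q_j|^{\mu}$ forces $|\lambda_j|\lesssim 2^{jn\mu(1/\epsilon-1)}$, and since $\mu<0$ and $1/\epsilon-1>0$ the exponent is negative, so $\sum_j|\lambda_j|$ converges.

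I expect the main technical hurdle to be the polynomial correction step in the atomic decomposition, which must simultaneously (i) produce vanishing moments up to order $s$, (ii) keep each $a_j$ supported in $Q_j$, and (iii) not inflate $\|a_j\|_{L^q(Q_j)}$ beyond a constant multiple of $\|M\mathbf{1}_{A_j}\|_{L^q(\rn)}$, so that $|\lambda_j|$ inherits the molecule's geometric decay. Part~1, by contrast, is essentially bookkeeping once the polynomial reverse H\"older step is identified as the device that produces the $2^{(j-i)s}$ factor in the telescoping chain and thus pins the decay assumption on $\epsilon$ to the order $s$.
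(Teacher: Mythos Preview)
The paper does not supply its own proof of this lemma; it is quoted from the companion article \cite{jtyyz2} (Propositions~3.11 and 3.15 and Theorem~3.16 there). Your outline is exactly the standard molecular argument that appears in that source: dyadic annular decomposition plus a telescoping polynomial chain for the functional bound, and a molecular-to-atomic decomposition with telescoping polynomial corrections for the $HK$-membership.

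One point worth making explicit: the hypothesis $\mu/\epsilon+1/q'+s/n<0$ is doing double duty, and your sketch uses it in both places without quite saying so. In Part~1 it is what guarantees $|x|^{s}|M(x)|\in L^{1}(\rn)$, so that the subtraction $f\mapsto f-P_{Q_0}^{(s)}(f)$ is legitimate before you split into annuli; without this the manipulation $\int fM=\sum_j\int_{A_j}(f-P_{Q_0}^{(s)}(f))M$ is formal. In Part~2 it is what makes the tail moments $\sum_{k\ge j}\int_{A_k}M(x)(x-z)^{\gamma}\,dx$ absolutely convergent, and this is precisely the estimate that controls the $L^{q}$-norm of the correction polynomial and keeps $|\lambda_j|\lesssim 2^{jn\mu(1/\epsilon-1)}$; the bound $\mu<0$ alone would not suffice for~(iii) in your last paragraph. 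So your identification of the polynomial-correction step as the crux is correct, and the missing ingredient you anticipate is exactly the tail-moment estimate driven by the $\epsilon$-hypothesis.
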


The second lemma is a useful
criterion for the boundedness of linear operators on
the Hardy-kind space $HK_{(p,q,s)_\alpha}^{\mathrm{con}}(\rn)$.

\begin{lem}\label{Bounded-HK-A}
Let $p_1$, $p_2$, $q_1$, $q_2\in (1,\fz)$,
$\frac{1}{p_i}+\frac{1}{p_i'}=1
=\frac{1}{q_i}+\frac{1}{q_i'}$ for any $i\in\{1,2\}$,
$s_1$, $s_2\in\zz_+$, $\alpha_1\in\rr$,
and $\alpha_2\in (\frac{1}{q_2}
-\frac{1}{p_2},\fz)$.
Let $A$ be a linear operator defined on
$HK_{(p_1,q_1,s_1)_{\alpha_1}}^{\mathrm{con-fin}}(\rn)$,
and $\widetilde{A}$ a linear operator bounded
from $JN_{(p_2',q_2',s_2)_{\alpha_2}}^{\mathrm{con}}(\rn)$
to $JN_{(p_1',q_1',s_1)_{\alpha_1}}^{\mathrm{con}}(\rn)$.
If the following statements hold true:
\begin{enumerate}
\item[\rm (i)] there exists a positive constant $C_0$ such that,
for any $(p_1,q_1,s_1)_{\alpha_1}$-atom $a$
and some given $\epsilon\in(0,1)$ such that
$\frac{1}{\epsilon}
(\frac{1}{q_2}-\frac{1}{p_2}-\alpha_2)+\frac{1}{q_2'}+\frac{s_2}{n}<0$,
$A(a)/C_0$ is a $(p_2,q_2,
s_2,\alpha_2,\epsilon)$-molecule;
\item[\rm (ii)] for any $(p_1,q_1,s_1)_{\alpha_1}$-atom $a$ and any
$f\in JN_{(p_2',q_2',s_2)_{\alpha_2}}^{\mathrm{con}}(\rn)$,
$\langle A(a),f\rangle=\langle a, \widetilde{A}(f)\rangle$,
\end{enumerate}
then the linear operator $A$ has a unique continuous linear extension,
still denoted by $A$, from
$HK_{(p_1,q_1,s_1)_{\alpha_1}}^{\mathrm{con}}(\rn)$
to $HK_{(p_2,q_2,s_2)_{\alpha_2}}^{\mathrm{con}}(\rn)$,
namely, there exists a positive constant $C$ such that,
for any $g\in HK_{(p_1,q_1,s_1)_{\alpha_1}}^{\mathrm{con}}(\rn)$,
$$\|A(g)\|_{HK_{(p_2,q_2,s_2)_{\alpha_2}}^{\mathrm{con}}(\rn)}
\leq C\|g\|_{HK_{(p_1,q_1,s_1)_{\alpha_1}}^{\mathrm{con}}(\rn)}$$
and, moreover,
for any $f\in JN_{(p_2',q_2',s_2)_{\alpha_2}}^{\mathrm{con}}(\rn)$,
\begin{align*}
\langle A(g),f\rangle
=\lf\langle g,\widetilde{A}(f)\r\rangle.
\end{align*}
\end{lem}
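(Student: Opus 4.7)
\emph{Proof plan.}
The strategy is to bypass the non-concavity of the target Hardy-type norm by working on the dual side. More precisely, I exploit the predual relation $(HK_{(p_2,q_2,s_2)_{\alpha_2}}^{\mathrm{con}}(\rn))^*=JN_{(p_2',q_2',s_2)_{\alpha_2}}^{\mathrm{con}}(\rn)$ from Lemma \ref{t3.9} and transfer the quantitative control to the assumed boundedness of $\widetilde{A}$ on the Campanato side, where linear estimates behave well under suprema.

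First, for $g\in HK_{(p_1,q_1,s_1)_{\alpha_1}}^{\mathrm{con-fin}}(\rn)$ with representation $g=\sum_{j=1}^M\lambda_j a_j$, I set $A(g):=\sum_{j=1}^M\lambda_j A(a_j)$ pointwise. Hypothesis (i) makes each $A(a_j)/C_0$ a $(p_2,q_2,s_2,\alpha_2,\epsilon)$-molecule, so Lemma \ref{M-JN1} guarantees that $A(a_j)$, and hence $A(g)$, acts as a continuous linear functional on $JN_{(p_2',q_2',s_2)_{\alpha_2}}^{\mathrm{con}}(\rn)$ via integration, and in particular lies in $HK_{(p_2,q_2,s_2)_{\alpha_2}}^{\mathrm{con}}(\rn)$. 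Combining linearity with hypothesis (ii), I obtain the adjoint identity
\[
\langle A(g),f\rangle
=\sum_{j=1}^M\lambda_j\langle a_j,\widetilde{A}(f)\rangle
=\langle g,\widetilde{A}(f)\rangle
\]
for every $f\in JN_{(p_2',q_2',s_2)_{\alpha_2}}^{\mathrm{con}}(\rn)$, where the last equality uses Lemma \ref{t3.9}(i) applied to $\widetilde{A}(f)\in JN_{(p_1',q_1',s_1)_{\alpha_1}}^{\mathrm{con}}(\rn)$ paired against the finite atomic element $g$. As a byproduct, this shows that $A(g)$ is independent of the chosen atomic representation.

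To extract the norm bound on $HK^{\mathrm{con-fin}}$, I invoke Lemma \ref{t3.9} once more: the canonical embedding of $HK_{(p_2,q_2,s_2)_{\alpha_2}}^{\mathrm{con}}(\rn)$ into the dual of its predual delivers the equivalent norm
\[
\|h\|_{HK_{(p_2,q_2,s_2)_{\alpha_2}}^{\mathrm{con}}(\rn)}
\sim\sup_{\|f\|_{JN_{(p_2',q_2',s_2)_{\alpha_2}}^{\mathrm{con}}(\rn)}\le 1}|\langle h,f\rangle|.
\]
Applied to $h:=A(g)$, combined with the adjoint identity, the boundedness of $\widetilde{A}$, and the pairing bound $|\langle g,\widetilde{A}(f)\rangle|\ls\|g\|_{HK_{(p_1,q_1,s_1)_{\alpha_1}}^{\mathrm{con}}(\rn)}\,\|\widetilde{A}(f)\|_{JN_{(p_1',q_1',s_1)_{\alpha_1}}^{\mathrm{con}}(\rn)}$, this yields
$\|A(g)\|_{HK_{(p_2,q_2,s_2)_{\alpha_2}}^{\mathrm{con}}(\rn)}\ls\|g\|_{HK_{(p_1,q_1,s_1)_{\alpha_1}}^{\mathrm{con}}(\rn)}$.

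Finally, since $HK_{(p_1,q_1,s_1)_{\alpha_1}}^{\mathrm{con-fin}}(\rn)$ is dense in $HK_{(p_1,q_1,s_1)_{\alpha_1}}^{\mathrm{con}}(\rn)$ (by truncating both the polymer series and each polymer's atomic decomposition), the bounded map $A$ extends uniquely and continuously to all of $HK_{(p_1,q_1,s_1)_{\alpha_1}}^{\mathrm{con}}(\rn)$, with the adjoint identity $\langle A(g),f\rangle=\langle g,\widetilde{A}(f)\rangle$ persisting in the limit by joint continuity in both arguments. The main obstacle, as the authors flag, is structural rather than computational: a naive triangle-type estimate $\|A(g)\|_{HK}\le\sum_j|\lambda_j|\,\|A(a_j)\|_{HK}$ would give only $\ell^1$ control over the atomic coefficients, whereas the Hardy norm calls for $\ell^{p_1}$ with $p_1>1$. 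The duality route sidesteps this inaccessible sum-of-norms estimate by replacing it with a linear operator-norm bound for $\widetilde{A}$, where non-concavity plays no role.
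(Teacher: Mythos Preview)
The paper does not prove this lemma; it is quoted verbatim from \cite[Theorem 3.16]{jtyyz2} (see the sentence introducing Lemmas \ref{M-JN1} and \ref{Bounded-HK-A}). So there is no in-paper proof to compare against.

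That said, your argument is sound and is almost certainly the intended one. The decisive step is the norm recovery
\[
\|h\|_{HK_{(p_2,q_2,s_2)_{\alpha_2}}^{\mathrm{con}}(\rn)}
\sim\sup_{\|f\|_{JN_{(p_2',q_2',s_2)_{\alpha_2}}^{\mathrm{con}}(\rn)}\le 1}|\langle h,f\rangle|,
\]
which you obtain by combining Hahn--Banach with Lemma \ref{t3.9}. One terminological slip: you call this ``the canonical embedding of $HK$ into the dual of its predual,'' but Lemma \ref{t3.9} gives $(HK)^*=JN$, so what you are actually using is the isometric embedding of $HK$ into its \emph{bidual} $(HK)^{**}\simeq (JN)^*$. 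The mathematics is unaffected. With that equivalence in hand, the adjoint identity $\langle A(g),f\rangle=\langle g,\widetilde A(f)\rangle$ transfers the estimate to $\widetilde A$, and the $\ell^{p_1}$-versus-$\ell^1$ obstruction you correctly diagnose disappears. Density of finite atomic elements and continuity of both pairings then give the extension and preserve the adjoint identity. Nothing is missing.
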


\subsection{Fractional integrals
on $HK_{(p,q,s)_\alpha}^{\mathrm{con}}(\rn)$\label{S-I-HK}}

In this subsection, we show that $I_{\beta}$ in Definition \ref{Def-I-s}
can be extended to a unique continuous linear operator from
$HK_{(p,q,s)_{\alpha+\beta/n}}^{\mathrm{con}}(\rn)$
to $HK_{(p,q,s)_\alpha}^{\mathrm{con}}(\rn)$.
To this end, in Lemmas \ref{Lem-I-HK} and \ref{Ia-m}
below, we first prove that $I_{\beta}$
and $\widetilde{I}_{\beta}$ satisfy (i) and (ii) of Lemma \ref{Bounded-HK-A}, 
respectively.

\begin{lem}\label{Lem-I-HK}
Let $p$, $q\in(1,\fz)$,
$\frac{1}{q}+\frac{1}{q'}=1$, $s\in\zz_+$,
$\alpha\in(\frac{1}{q}-\frac{1}{p},\fz)$,
$\dz\in(0,1]$, $\beta\in(0,\dz)$,
and $\epsilon\in(0,1)$ be such that
$-\frac{1}{q'}-\frac{s+\dz-\beta}{n}
\leq \frac{1}{\epsilon}(\frac{1}{q}-\frac{1}{p}-\alpha)$.
Let $k_{\beta}$ be an $s$-order fractional kernel with regularity $\dz$
and $I_{\beta}$ the fractional integral with kernel $k_{\beta}$.
Then the following two statements are equivalent:
\begin{enumerate}
\item[\rm (i)]
there exists a positive constant $C$
such that, for any $(p,q,s)_{\alpha+\beta/n}$-atom
$a$, $I_\beta (a)/C$
is a $(p,q,s,\alpha,\epsilon)$-molecule;
\item[\rm (ii)]
$I_{\beta}$ has the vanishing moments up to order $s$
as in Definition \ref{Def-I-s}.
\end{enumerate}
\end{lem}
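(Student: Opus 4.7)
The equivalence splits into the easy direction (i) $\Rightarrow$ (ii) and the substantial direction (ii) $\Rightarrow$ (i). For (i) $\Rightarrow$ (ii), assume that for every $(p,q,s)_{\alpha+\beta/n}$-atom $a$ the function $I_\beta(a)/C$ is a $(p,q,s,\alpha,\epsilon)$-molecule. Then Definition \ref{Def-mole}(iii) gives $\int_{\rn}I_\beta(a)(x)\,x^\gamma\,dx=0$ for every $\gamma\in\zz_+^n$ with $|\gamma|\leq s$. Given any $h\in L_s^q(\rn)$ with bounded support in some cube $Q$, a suitable positive multiple of $h$ is a $(p,q,s)_{\alpha+\beta/n}$-atom, and hence the same moment vanishing passes to $h$. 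Corollary \ref{coro2.16x} then upgrades this to the full vanishing moments condition of Definition \ref{Def-I-s}.

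For the converse, fix a $(p,q,s)_{\alpha+\beta/n}$-atom $a$ supported in $Q:=Q_z(r)$. The plan is to verify each of the three conditions in Definition \ref{Def-mole} for $I_\beta(a)/C$ with a suitable constant $C$. Condition (iii) is immediate from hypothesis (ii) via Corollary \ref{coro2.16x}. For condition (i), the $L^q$-estimate on the central cube $Q$, I would combine the Hardy--Littlewood--Sobolev mapping property of Lemma \ref{fractional}(ii) with H\"older's inequality, splitting into cases $q<n/\beta$ or $q\geq n/\beta$ exactly as in Cases ii) and iii) of the proof of Theorem \ref{Iw-JN-bound}; this yields $\|I_\beta(a)\mathbf{1}_Q\|_{L^q(\rn)}\lesssim|Q|^{\beta/n}\|a\|_{L^q(Q)}\lesssim|Q|^{1/q-1/p-\alpha}$ after invoking the atom size bound.

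The core of the argument is the annular decay condition (ii) of Definition \ref{Def-mole}. For $j\geq 2$ and $x\in Q_z(2^jr)\setminus Q_z(2^{j-1}r)$, every $y\in Q$ satisfies $|x-y|\sim|x-z|\sim 2^jr$ and $|y-z|\lesssim r$. Exploiting the vanishing moments of $a$, I rewrite
\begin{align*}
I_\beta(a)(x)=\int_Q\lf[k_\beta(x,y)-\sum_{\{\gamma\in\zz_+^n:\ |\gamma|\leq s\}}\frac{\partial_{(2)}^\gamma k_\beta(x,z)}{\gamma!}(y-z)^\gamma\r]a(y)\,dy,
\end{align*}
and apply a Taylor-remainder argument based on \eqref{regular-i'} to bound the bracket by a constant times $|y-z|^{s+\delta}/|x-z|^{n+s+\delta-\beta}$. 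H\"older's inequality then converts $\|a\|_{L^1(Q)}$ into the atom's $L^q$ bound, and integrating over the annulus (whose measure is $\lesssim(2^jr)^n$) produces
\begin{align*}
\lf\|I_\beta(a)\mathbf{1}_{Q_z(2^jr)\setminus Q_z(2^{j-1}r)}\r\|_{L^q(\rn)}\lesssim 2^{-j(n/q'+s+\delta-\beta)}|Q|^{1/q-1/p-\alpha}.
\end{align*}
The standing hypothesis $-1/q'-(s+\delta-\beta)/n\leq(1/q-1/p-\alpha)/\epsilon$ is precisely what is needed to dominate this by $2^{jn(1/q-1/p-\alpha)/\epsilon}|Q|^{1/q-1/p-\alpha}$, completing condition (ii) of Definition \ref{Def-mole}. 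The case $j=1$ reduces to the central-cube estimate applied to $Q_z(2r)$. The main obstacle is maintaining clean quantitative bookkeeping: the kernel regularity $\delta$, the moment order $s$, and the molecule decay parameter $\epsilon$ must be linked through precisely the stated inequality, and this is exactly where the numerical hypothesis enters and pins down the admissible range of $\epsilon$.
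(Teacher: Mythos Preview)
Your proof follows essentially the same strategy as the paper's, and it is correct. Two small points of comparison are worth noting.

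For the central estimate (Definition \ref{Def-mole}(i)) the paper uses the pointwise domination $|I_\beta(a)(x)|\lesssim R_0^\beta\,\mathcal{M}(a)(x)$ together with the $L^q$-boundedness of the Hardy--Littlewood maximal operator, rather than the Hardy--Littlewood--Sobolev mapping of Lemma \ref{fractional} that you invoke; either route gives the same bound. More importantly, the paper centers the molecule at the enlarged cube $Q_z(R_0)$ with $R_0:=2\sqrt{n}\,r$ instead of at $Q_z(r)$. This is not cosmetic: with your choice, the regularity hypothesis in \eqref{regular-i'} requires $|x-z|\ge 2|\widetilde{y}-z|$, and for $x\in Q_z(2^jr)\setminus Q_z(2^{j-1}r)$, $\widetilde{y}\in Q_z(r)$ this forces $2^{j-2}r\ge\sqrt{n}\,r/ 1$, i.e.\ $j\ge 2+\tfrac12\log_2 n$. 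So in dimensions $n\ge 2$ a few annuli beyond $j=1$ also need the ``central'' treatment; your statement that only $j=1$ is special is correct only when $n=1$. The paper's enlargement absorbs this once and for all.

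Finally, your appeal to Corollary \ref{coro2.16x} in the direction (i) $\Rightarrow$ (ii) is slightly off: that corollary equates the $L^2_s$-condition of Definition \ref{Def-I-s} with the condition for \emph{all} $q\in(1,\infty)$, whereas you have verified it for the single exponent $q$ in the lemma. The paper instead runs the argument of Proposition \ref{I-a-x} (which works for any fixed $q>1$) to conclude $\widetilde{I}_{\beta,B_0}(y^\gamma)\in\mathcal{P}_s(\rn)$, and then invokes Proposition \ref{I-a-x} itself. Your route is easily repaired the same way.
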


\begin{proof}
Let $\beta$, $\dz$, $p$, $q$, $q'$, $s$, $\alpha$, $k_{\beta}$, and
$I_{\beta}$ be as in the present lemma.
We first show that (i) $\Rightarrow$ (ii).
Indeed, let $h\in L_s^q(\rn)$ be supported in a
ball $B(x_0,r_0)$ with $x_0\in\rn$ and $r_0\in(0,\fz)$.
Without loss of generality, we
may assume that $\|h\|_{L^q(\rn)}>0$.
Then $\widetilde{h}:=\frac{|Q_{x_0}(2r_0)
|^{\frac{1}{q}-\frac{1}{p}-\alpha-\frac{\beta}{n}}h}
{\|h\|_{L^q(\rn)}}$ is a $(p,q,s)_{\alpha+\beta/n}$-atom
supported in $Q_{x_0}(2r_0)$.
From this, (i) of the present lemma, and \eqref{0-0}, we deduce that,
for any $\gamma\in\zz_+^n$ with $|\gamma|\leq s$,
$$
\int_{B(x_0,r_0)}\widetilde{h}(x)\widetilde{I}_{\beta}(y^{\gamma})(x)\,dx
=\int_{\rn}I_{\beta}(\widetilde{h})(x)x^{\gamma}\,dx
=0.
$$
By this and a proof similar to that of Proposition \ref{I-a-x},
we conclude that, for any $\gamma\in\zz_+^n$ with $|\gamma|\leq s$,
$\widetilde{I}_{\beta}(y^{\gamma})\in\mathcal{P}_s(\rn)$
after changing values on a set of measure zero.
From this and Proposition \ref{I-a-x}, we deduce that
$I_{\beta}$ has the vanishing moments up to order $s$.
This finishes the proof of that (i) $\Rightarrow$ (ii).

Now, we show that (ii) $\Rightarrow$ (i).
Let $a$ be a $(p,q,s)_{\alpha+\beta/n}$-atom
supported in the cube $Q_{z}(r)$
with $z\in \rn$ and $r\in (0,\fz)$.
To show that $I_\beta (a)$
is a $(p,q,s,\alpha,\epsilon)$-molecule,
we first claim that there exists a
positive constant $C_1$, independent of $a$,
such that
\begin{align}\label{Ia-i}
\lf\|I_\beta(a)\mathbf{1}_{Q_z(R_0)}\r\|_{L^q(\rn)}
\leq C_1\lf|Q_z(R_0)\r|^{\frac{1}{q}-\frac{1}{p}-\alpha}
\end{align}
with $R_0:=2\sqrt n r$.
Indeed, from \eqref{size-i'} with $\gamma:=\mathbf{0}$,
and $\supp\,(a)\subset Q_z(r)$,
we deduce that, for any $x\in Q_z(R_0)$,
\begin{align}\label{I-i}
\lf|I_\beta (a)(x)\r|
&\leq\int_{Q_z(r)}\lf|k_{\beta}(x,y)a(y)\r|\,dy\noz\\
&\ls\int_{Q_z(r)}\frac{|a(y)|}{|x-y|^{n-\beta}}\,dy
\ls\int_{Q(x,2R_0)}\frac{|a(y)|}{|x-y|^{n-\beta}}\,dy\noz\\
&\ls\sum_{j\in\zz_+}
\int_{Q(x,2^{-j+1}R_0)\setminus Q(x,2^{-j}R_0)}
\frac{|a(y)|}{|x-y|^{n-\beta}}\,dy\noz\\
&\lesssim\sum_{j\in\zz_+}\lf(2^{-j+1}R_0\r)^{\beta-n}
\int_{Q(x,2^{-j+1}R_0)\setminus Q(x,2^{-j}R_0)}|a(y)|\,dy\noz\\
&\sim R_0^{\beta}\sum_{j\in\zz_+}2^{-j\beta}
\fint_{Q(x,2^{-j+1}R_0)}|a(y)|\,dy
\lesssim R_0^{\beta}\mathcal{M}(a)(x),
\end{align}
where $\mathcal{M}$ is the \textit{Hardy--Littlewood maximal operator}
defined by setting,
for any $f\in L^{1}_{\mathrm{loc}}(\rn)$ and $x\in\rn$,
$$\mathcal{M}(f)(x):=\sup_{Q\ni x}\fint_{Q}|f(y)|\,dy$$
with the supremum taken over all cubes containing $x$.
Then, by \eqref{I-i}, the boundedness of $\mathcal{M}$
on $L^q(\rn)$ (see, for instance, \cite[p.\,31, Theorem 2.5]{Duo01}),
Definition \ref{d3.2}(ii), and $r\sim R_0$, we conclude that
\begin{align*}
\lf\|I_\beta (a)\mathbf{1}_{Q_z(R_0)}\r\|_{L^q(\rn)}
&\lesssim R_0^{\beta}\lf\|\mathcal{M}(a)\r\|_{L^q(\rn)}
\lesssim R_0^{\beta}\lf\|a\r\|_{L^q(\rn)}\\
&\lesssim R_0^{\beta}\lf|Q_z(r)
\r|^{\frac{1}{q}-\frac{1}{p}-\alpha-\frac{\beta}{n}}
\sim \lf|Q_z(R_0)\r|^{\frac{1}{q}-\frac{1}{p}-\alpha}.
\end{align*}
This implies that \eqref{Ia-i} and hence
the above claim hold true.

Next, we show that there exists a
positive constant $C_2$, independent of $a$,
such that, for any $j\in\nn$,
\begin{align}\label{Ia-ii}
\lf\|I_\beta (a)\mathbf{1}_{Q_z(2^{j}R_0)
\setminus Q_z(2^{j-1}R_0)}\r\|_{L^q(\rn)}
\leq C_2 2^{\frac{jn}{\epsilon}(\frac{1}{q}-\frac{1}{p}-\alpha)}
|Q_z(R_0)|^{\frac{1}{q}-\frac{1}{p}-\alpha}.
\end{align}
Indeed, from the fact $a\in L^q_s(\rn)$, the Taylor remainder theorem,
the H\"older inequality, and Definition \ref{d3.2}(ii),
we deduce that, for any $j\in\nn$,
$x\in Q_z(2^{j}R_0)\setminus Q_z(2^{j-1}R_0)$, $y\in Q_z(r)$,
there exists a $\widetilde{y}\in Q_z(r)$ such that
\begin{align*}
\lf|I_\beta (a)(x)\r|
&=\lf|\int_{Q_z(r)}k_{\beta}(x,y)a(y)\,dy\r|\\
&=\lf|\int_{Q_z(r)}\lf[k_{\beta}(x,y)
-\sum_{\{\gamma\in\zz_+^n:\ |\gamma|\leq s\}}
\frac{\partial_{(2)}^{\gamma}k_{\beta}(x,z)}{\gamma!}(y-z)^{\gamma}
\r]a(y)\,dy\r|\\
&=\lf|\int_{Q_z(r)}\lf[
\sum_{\{\gamma\in\zz_+^n:\ |\gamma|=s\}}
\frac{\partial_{(2)}^{\gamma}k_{\beta}(x,\widetilde{y})
-\partial_{(2)}^{\gamma}k_{\beta}(x,z)}{\gamma!}(y-z)^{\gamma}
\r]a(y)\,dy\r|\\
&\lesssim\int_{Q_z(r)}\frac{
|\widetilde{y}-z|^\dz|y-z|^{s}|a(y)|}{|x-z|^{n+s+\dz-\beta}}\,dy
\lesssim\frac{|Q_z(r)|^{\frac{1}{q'}}
r^{s+\dz}}{|x-z|^{n+s+\dz-\beta}}
\|a\|_{L^{q}(\rn)}\\
&\ls \frac{r^{\frac{n}{q'}+s+\dz}}{(2^{j}r)^{n+s+\dz-\beta}}
\lf|Q_z(r)\r|^{\frac{1}{q}-\frac{1}{p}-\alpha-\frac{\beta}{n}}
\sim 2^{-jn(1+\frac{s+\dz-\beta}{n})}
\lf|Q_z(r)\r|^{-\frac{1}{p}-\alpha},
\end{align*}
where, in the fourth step, we used \eqref{regular-i'}
together with $|x-z|\geq 2|\widetilde{y}-z|$.
By this, $-\frac{1}{q'}-\frac{s+\dz-\beta}{n}
\leq \frac{1}{\epsilon}(\frac{1}{q}-\frac{1}{p}-\alpha)$,
and $r\sim R_0$, we obtain, for any $j\in\nn$,
\begin{align*}
&\lf\|I_\beta (a)\mathbf{1}_{Q_z(2^{j}R_0)
\setminus Q_z(2^{j-1}R_0)}\r\|_{L^q(\rn)}\\
&\quad\lesssim 2^{-jn(1+\frac{s+\dz-\beta}{n})}
\lf|Q_z(r)\r|^{-\frac{1}{p}-\alpha}
\lf|Q_z(2^{j}r)\setminus Q_z(2^{j-1}r)\r|^{\frac{1}{q}}\\
&\quad\lesssim 2^{jn(-1-\frac{s+\dz-\beta}{n}+\frac{1}{q})}
\lf|Q_z(r)\r|^{\frac{1}{q}-\frac{1}{p}-\alpha}
\lesssim 2^{\frac{jn}{\epsilon}(\frac{1}{q}-\frac{1}{p}-\alpha)}
\lf|Q_z(r)\r|^{\frac{1}{q}-\frac{1}{p}-\alpha}\\
&\quad\sim 2^{\frac{jn}{\epsilon}(\frac{1}{q}-\frac{1}{p}-\alpha)}
|Q_z(R_0)|^{\frac{1}{q}-\frac{1}{p}-\alpha},
\end{align*}
which implies that \eqref{Ia-ii} holds true.

To sum up, let $C:=\max\{C_1,C_2\}$.
Then, by \eqref{Ia-i}, \eqref{Ia-ii},
and \eqref{I-x-gam}, we find that $I_{\beta}(a)/C$
satisfies (i), (ii), and (iii) of Definition \ref{Def-mole}.
This finishes the proof of that (ii) $\Rightarrow$ (i)
and hence of Lemma \ref{Lem-I-HK}.
\end{proof}

\begin{lem}\label{Ia-m}
Let $p$, $q\in (1,\fz)$, $s\in\zz_+$,
$\dz\in(0,1]$, $\beta\in(0,\dz)$,
$\alpha\in(\frac{1}{q}-\frac{1}{p},
1-\frac{1}{p}+\frac{s+\dz-\beta}{n})$,
$k_{\beta}$ be an $s$-order fractional kernel with regularity $\dz$,
and $I_\beta$ a fractional integral having
the vanishing moments up to
order $s$ with kernel $k_{\beta}$. Let $\widetilde{k}_{\beta}$ be 
the adjoint kernel of $k_{\beta}$ as in
\eqref{k-w}, and $\widetilde{I}_\beta$ as in Definition \ref{I-w}
and Remark \ref{Rem-Iw} with kernel $\widetilde{k}_{\beta}$.
Then, for any $(p,q,s)_{\alpha+\beta/n}$-atom $a$
and any $f\in JN_{(p',q',s)_\alpha}^{\mathrm{con}}(\rn)$,
\begin{align}\label{4.32y}
\lf\langle I_{\beta}(a), f\r\rangle
=\lf\langle a, \widetilde{I}_{\beta}(f)\r\rangle.
\end{align}
\end{lem}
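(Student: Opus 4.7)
The plan is to express both sides of \eqref{4.32y} as absolutely convergent double integrals and swap the order of integration by Fubini. By Lemma \ref{Lem-I-HK} combined with Lemma \ref{M-JN1}, $I_{\beta}(a)$ is, up to a constant, a $(p,q,s,\alpha,\epsilon)$-molecule for suitable $\epsilon$, so $\langle I_{\beta}(a),f\rangle$ is well defined and equals $\int_{\rn}f(x)I_{\beta}(a)(x)\,dx$. For the right-hand side, Remark \ref{Rem-Iw} tells me that the values of $\widetilde{I}_{\beta,B_0}(f)$ for different base balls $B_0$ differ only by a member of $\mathcal{P}_s(\rn)$; since $a$ has vanishing moments up to order $s$, the pairing $\int_{\rn}a(y)\widetilde{I}_{\beta,B_0}(f)(y)\,dy$ is independent of $B_0$. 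I therefore choose $B_0$ to be any ball containing the supporting cube $Q:=Q_z(r)$ of $a$.

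With this choice of $B_0$, I split $\widetilde{I}_{\beta,B_0}(f)(y)$ into a near piece $\int_{B_0}\widetilde{k}_{\beta}(y,u)f(u)\,du$ (the Taylor correction vanishes here since $\mathbf{1}_{\rn\setminus B_0}=0$ on $B_0$) and a far piece $\int_{\rn\setminus B_0}K_{B_0}(y,u)f(u)\,du$, where
\begin{align*}
K_{B_0}(y,u):=\widetilde{k}_{\beta}(y,u)-\sum_{\{\gamma\in\zz_+^n:\ |\gamma|\leq s\}}\frac{\partial_{(1)}^{\gamma}\widetilde{k}_{\beta}(x_0,u)}{\gamma!}(y-x_0)^{\gamma}.
\end{align*}
For the near piece, I would combine $f\in L^{q'}_{\mathrm{loc}}(\rn)$, the bounded support of $a\in L^q(\rn)$, and Lemma \ref{fractional}, dispatching the subcritical and supercritical regimes of $q'$ relative to $n/\beta$ precisely as in Cases i)--iii) of the proof of Theorem \ref{Iw-JN-bound}, to verify $\int_Q\int_{B_0}|a(y)\widetilde{k}_{\beta}(y,u)f(u)|\,du\,dy<\fz$. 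Fubini then produces, via $\widetilde{k}_{\beta}(y,u)=k_{\beta}(u,y)$, the contribution $\int_{B_0}f(u)I_{\beta}(a)(u)\,du$.

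For the far piece, the Taylor remainder estimate together with the regularity condition \eqref{regular-i} yields
\begin{align*}
|K_{B_0}(y,u)|\lesssim \frac{r^{s+\dz}}{|u-x_0|^{n+s+\dz-\beta}}
\end{align*}
for $y\in Q$ and $u\in\rn\setminus B_0$, exactly as in the proof of Proposition \ref{I-w-h}. Applying Lemma \ref{I-JN} with $\lambda:=s+\dz-\beta\in(s,\fz)$ to this bound integrated against $|f|$, using that $f\in JN_{(p',q',s)_\alpha}^{\mathrm{con}}(\rn)$ and the admissible range of $\alpha$ (which places us in the hypothesis $\alpha<\frac{1}{p'}+\frac{\lambda}{n}$ of Lemma \ref{I-JN}), I obtain absolute integrability of the far-piece double integral. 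Fubini then swaps the order; the vanishing moments of $a$ annihilate the Taylor term $\sum_\gamma\frac{\partial_{(1)}^{\gamma}\widetilde{k}_\beta(x_0,u)}{\gamma!}(y-x_0)^\gamma$ after expanding $(y-x_0)^\gamma$ in monomials $y^\nu$ with $|\nu|\leq s$, and the remaining inner integral is $\int_Q a(y)\widetilde{k}_\beta(y,u)\,dy=I_\beta(a)(u)$. Summing the two pieces gives $\int_{\rn}f(u)I_\beta(a)(u)\,du=\langle I_\beta(a),f\rangle$, which is \eqref{4.32y}. The principal obstacle is the case analysis of $q'$ for absolute convergence in the near piece; once that is settled, the cancellation by vanishing moments in the far piece and the recombination are routine.
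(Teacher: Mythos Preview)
Your strategy is essentially the paper's: a near/far split followed by Fubini, just run from the right-hand side rather than the left. The near piece is fine. There is, however, a genuine gap in the far piece. Lemma \ref{I-JN} bounds
\[
\int_{\rn\setminus B_0}\frac{|f(u)-P_{B_0}^{(s)}(f)(u)|}{|u-x_0|^{n+\lambda}}\,du,
\]
not the integral with $|f(u)|$ in the numerator. Since membership in $JN_{(p',q',s)_\alpha}^{\mathrm{con}}(\rn)$ only controls oscillations of $f$, the integral $\int_{\rn\setminus B_0}\frac{|f(u)|}{|u-x_0|^{n+s+\dz-\beta}}\,du$ is \emph{not} covered by Lemma \ref{I-JN} as you claim. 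You must split $f=(f-P_{B_0}^{(s)}(f))+P_{B_0}^{(s)}(f)$: Lemma \ref{I-JN} handles the first summand, and for the polynomial $P:=P_{B_0}^{(s)}(f)$ you use $|P(u)|\lesssim (1+|u-x_0|)^s$ together with $\beta<\dz$ to see that $\int_{\rn\setminus B_0}\frac{|P(u)|}{|u-x_0|^{n+s+\dz-\beta}}\,du\lesssim\int_{\rn\setminus B_0}\frac{du}{|u-x_0|^{n+\dz-\beta}}<\fz$. The paper carries out exactly this split (see \eqref{I-HK-a-2} and the paragraph following it, where the polynomial piece is handled via \eqref{I-HK-a-1'}).

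Once this is patched, your argument and the paper's coincide: after Fubini on both pieces you recover $\int_{\rn}f(u)I_\beta(a)(u)\,du$, the Taylor correction being annihilated by the vanishing moments of $a$ precisely as you describe.
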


\begin{proof}
Let $p$, $q$, $s$, $\dz$, $\beta$, $\alpha$, $k_{\beta}$,
$I_{\beta}$, $\widetilde{k}_{\beta}$,
and $\widetilde{I}_{\beta}$ be as in the present lemma.
Also, let $a$ be any given $(p,q,s)_{\alpha+\beta/n}$-atom
supported in the cube $Q_{x_0}(2r_0/\sqrt n)$
with $x_0\in\rn$ and $r_0\in (0,\fz)$,
and $B_0:=B(x_0,r_0)$. Then
$Q_{x_0}(2r_0/\sqrt n)\subset B_0$.
From this and Lemmas \ref{M-JN1} and \ref{Lem-I-HK}, we deduce that,
for any $f\in JN_{(p',q',s)_\alpha}^{\mathrm{con}}(\rn)$,
\begin{align}\label{Ia-f}
&\lf\langle I_{\beta}(a), f\r\rangle\noz\\
&\quad=\int_{\rn} I_{\beta}(a)(x)f(x)\,dx
=\int_{\rn}\int_{B_0}k_\beta(x,y)a(y)\,dy\,f(x)\,dx\noz\\
&\quad=\int_{2B_0}\int_{B_0}k_\beta(x,y)a(y)\,dy\,f(x)\,dx\noz\\
&\qquad+\int_{\rn\setminus 2B_0}\int_{B_0}\lf[k_{\beta}(x,y)
-\sum_{\{\gamma\in\zz_+^n:\ |\gamma|\leq s\}}
\frac{\partial_{(2)}^{\gamma}k_{\beta}(x,x_0)}{\gamma!}(y-x_0)^{\gamma}
\r]a(y)\,dy\,f(x)\,dx\noz\\
&\quad=:\mathrm{D}_1+\mathrm{D}_2.
\end{align}
Thus, to show \eqref{4.32y}, it suffices to calculate
$\mathrm{D}_1$ and $\mathrm{D}_2$.

We first consider $\mathrm{D}_1$.
Indeed, let $\mathcal{I}_{\beta}$ be as in \eqref{cla-I}.
By \eqref{size-i'} with $\gamma:=\mathbf{0}$,
\eqref{3.13x}, \eqref{2.15x}, and the fact that
$f\mathbf{1}_{2B_0}\in L^{q'}(\rn)$,
we conclude that
\begin{align*}
\int_{2B_0}\int_{B_0}\lf|k_{\beta}(x,y)a(y)\r|\,dy\,\lf|f(x)\r|\,dx
&\ls\int_{2B_0}\int_{B_0}\frac{|a(y)|}{|x-y|^{n-\beta}}\,dy\,\lf|f(x)\r|\,dx\\
&\ls \lf\|\mathcal{I}_{\beta}(|a|)\r\|_{L^q(2B_0)}
\|f\|_{L^{q'}(2B_0)}<\fz.
\end{align*}
From this and the Fubini theorem,
we deduce that
\begin{align}\label{I-01}
\mathrm{D}_1=\int_{B_0}\int_{2B_0}k_{\beta}(x,y)a(y)f(x)\,dx\,dy.
\end{align}
This is a desired conclusion of $\mathrm{D}_1$.

Now, we consider $\mathrm{D}_2$. To this end, we first claim that
\begin{align}\label{I-HK-a-1}
\int_{\rn\setminus 2B_0}\int_{B_0}\lf|k_{\beta}(x,y)
-\sum_{\{\gamma\in\zz_+^n:\ |\gamma|\leq s\}}
\frac{\partial_{(2)}^{\gamma}k_{\beta}(x,x_0)}{\gamma!}
(y-x_0)^{\gamma}\r||a(y)||f(x)|\,dy\,dx<\fz.
\end{align}
Indeed, by \eqref{I-HK-a-1'}, we conclude that
\begin{align}\label{I-HK-a-2}
&\int_{\rn\setminus 2B_0}\int_{B_0}\lf|k_{\beta}(x,y)
-\sum_{\{\gamma\in\zz_+^n:\ |\gamma|\leq s\}}
\frac{\partial_{(2)}^{\gamma}k_{\beta}(x,x_0)}{\gamma!}
(y-x_0)^{\gamma}\r|\noz\\
&\qquad\times|a(y)|\lf|P_{2B_0}^{(s)}(f)(x)\r|\,dy\,dx\noz\\
&\quad<\fz.
\end{align}
Moreover, from the Tonelli theorem, the Taylor remainder theorem,
Lemma \ref{I-JN} with $\lz:=s+\dz-\beta\in(s,\fz)$,
and $\alpha\in(\frac{1}{q}-\frac{1}{p},
1-\frac{1}{p}+\frac{s+\dz-\beta}{n})$,
we deduce that, for any $y\in B_0$,
there exists a $\widetilde{y}\in B_0$ such that
\begin{align*}
&\int_{\rn\setminus 2B_0}\int_{B_0}\lf|k_{\beta}(x,y)
-\sum_{\{\gamma\in\zz_+^n:\ |\gamma|\leq s\}}
\frac{\partial_{(2)}^{\gamma}k_{\beta}(x,x_0)}{\gamma!}
(y-x_0)^{\gamma}\r|\noz\\
&\qquad\times|a(y)|\lf|f(x)-P_{2B_0}^{(s)}(f)(x)\r|\,dy\,dx\\
&\quad=\int_{B_0}\int_{\rn\setminus 2B_0}\lf|k_{\beta}(x,y)
-\sum_{\{\gamma\in\zz_+^n:\ |\gamma|\leq s\}}
\frac{\partial_{(2)}^{\gamma}k_{\beta}(x,x_0)}{\gamma!}
(y-x_0)^{\gamma}\r||a(y)|\\
&\qquad\times\lf|f(x)-P_{2B_0}^{(s)}(f)(x)\r|\,dx\,dy\\
&\quad=\int_{B_0}\int_{\rn\setminus 2B_0}\lf|
\sum_{\{\gamma\in\zz_+^n:\ |\gamma|=s\}}
\frac{\partial_{(2)}^{\gamma}k_{\beta}(x,\widetilde{y})
-\partial_{(2)}^{\gamma}k_{\beta}(x,x_0)}{\gamma!}
(y-x_0)^{\gamma}\r||a(y)|\\
&\qquad\times\lf|f(x)-P_{2B_0}^{(s)}(f)(x)\r|\,dx\,dy\\
&\quad\lesssim\int_{B_0}|a(y)|\int_{\rn\setminus 2B_0}
\frac{|\widetilde{y}-x_0|^{\dz}|y-x_0|^{s}
|f(x)-P_{2B_0}^{(s)}(f)(x)|}{|x-x_0|^{n+s+\dz-\beta}}\,dx\,dy\\
&\quad \lesssim r_0^{-\frac{n}{p'}+\beta+\alpha n}
\|a\|_{L^1(B_0)}\|f\|_{JN_{(p',q',s)_\alpha}^{\mathrm{con}}(\rn)}
<\fz,
\end{align*}
where, in the third step, we used \eqref{regular-i'}
together with $|x-x_0|\geq 2|\widetilde{y}-x_0|$.
Using this and \eqref{I-HK-a-2}, we conclude that
\begin{align*}
&\int_{\rn\setminus 2B_0}\int_{B_0}\lf|k_{\beta}(x,y)
-\sum_{\{\gamma\in\zz_+^n:\ |\gamma|\leq s\}}
\frac{\partial_{(2)}^{\gamma}k_{\beta}(x,x_0)}{\gamma!}
(y-x_0)^{\gamma}\r||a(y)||f(x)|\,dy\,dx\\
&\quad\leq\int_{\rn\setminus 2B_0}\int_{B_0}\lf|k_{\beta}(x,y)
-\sum_{\{\gamma\in\zz_+^n:\ |\gamma|\leq s\}}
\frac{\partial_{(2)}^{\gamma}k_{\beta}(x,x_0)}{\gamma!}
(y-x_0)^{\gamma}\r||a(y)|\lf|P_{2B_0}^{(s)}(f)(x)\r|\,dy\,dx\\
&\quad\quad+\int_{\rn\setminus 2B_0}\int_{B_0}\lf|k_{\beta}(x,y)
-\sum_{\{\gamma\in\zz_+^n:\ |\gamma|\leq s\}}
\frac{\partial_{(2)}^{\gamma}k_{\beta}(x,x_0)}{\gamma!}
(y-x_0)^{\gamma}\r||a(y)|\\
&\qquad\times\lf|f(x)-P_{2B_0}^{(s)}(f)(x)\r|\,dy\,dx\\
&\quad<\fz.
\end{align*}
This shows that \eqref{I-HK-a-1} holds true.
Furthermore, by \eqref{I-HK-a-1} and
the Fubini theorem, we find that
\begin{align}\label{I-02}
\mathrm{D}_2=\int_{B_0}a(y)\int_{\rn\setminus 2B_0}\lf[k_{\beta}(x,y)
-\sum_{\{\gamma\in\zz_+^n:\ |\gamma|\leq s\}}
\frac{\partial_{(2)}^{\gamma}k_{\beta}(x,x_0)}{\gamma!}(y-x_0)^{\gamma}
\r]f(x)\,dx\,dy.
\end{align}
To sum up, using \eqref{k-w}, \eqref{Ia-f}, \eqref{I-01},
\eqref{I-02}, Remark \ref{Rem-Iw}, and Lemma \ref{t3.9}, we have
\begin{align*}
&\lf\langle I_{\beta}(a), f\r\rangle\\
&\quad=\int_{B_0}a(y)\int_{\rn}f(x)\lf[k_{\beta}(x,y)
-\sum_{\{\gamma\in\zz_+^n:\ |\gamma|\leq s\}}
\frac{\partial_{(2)}^{\gamma}k_{\beta}(x,x_0)}{\gamma!}(y-x_0)^{\gamma}
\mathbf{1}_{\rn\setminus 2B_0}(x)\r]\,dx\,dy\\
&\quad=\int_{B_0}a(y)\int_{\rn}f(x)\lf[\widetilde{k}_{\beta}(y,x)
-\sum_{\{\gamma\in\zz_+^n:\ |\gamma|\leq s\}}\frac{\partial_{(1)}^{\gamma}
\widetilde{k}_{\beta}(x_0,x)}{\gamma!}(y-x_0)^{\gamma}
\mathbf{1}_{\rn\setminus 2B_0}(x)\r]\,dx\,dy\\
&\quad=\int_{B_0}a(y)\widetilde{I}_\beta (f)(y)\,dy
=\lf\langle a, \widetilde{I}_{\beta}(f)\r\rangle.
\end{align*}
This finishes the proof of Lemma \ref{Ia-m}.
\end{proof}

Finally, we give the main result of this section.

\begin{thm}\label{I-Bounded-HK}
Let $p$, $q\in (1,\fz)$,
$\frac{1}{p}+\frac{1}{p'}=1=\frac{1}{q}+\frac{1}{q'}$,
$s\in\zz_+$, $\dz\in(0,1]$, $\beta\in(0,\dz)$, and $\alpha\in\rr$
satisfy $\frac{1}{q}-\frac{1}{p}<\alpha<\frac{s+\dz-\beta}{n}$,
$k_{\beta}$ be the $s$-order fractional kernel with regularity $\dz$,
and $I_\beta$ the fractional integral
having the vanishing moments
up to order $s$ with kernel $k_{\beta}$.
Then the following two statements are equivalent:
\begin{enumerate}
\item[\rm (i)]
there exists a positive constant $C$ such that,
for any $(p,q,s)_{\alpha+\beta/n}$-atom $a$,
\begin{align}\label{Ia-C}
\lf\|I_\beta (a)\r\|_{HK_{(p,q,s)_\alpha}^{\mathrm{con}}(\rn)}\leq C.
\end{align}
Moreover, $I_\beta$ can be extended to a unique continuous
linear operator, still denoted by $I_\beta$,
from $HK_{(p,q,s)_{\alpha+\beta/n}}^{\mathrm{con}}(\rn)$
to $HK_{(p,q,s)_\alpha}^{\mathrm{con}}(\rn)$,
namely, there exists a positive constant
$C$ such that, for any
$g\in HK_{(p,q,s)_{\alpha+\beta/n}}^{\mathrm{con}}(\rn)$,
\begin{align}\label{Ia-c1}
\lf\|I_\beta (g)\r\|_{HK_{(p,q,s)_{\alpha}}^{\mathrm{con}}(\rn)}
\leq C\|g\|_{HK_{(p,q,s)_{\alpha+\beta/n}}^{\mathrm{con}}(\rn)}
\end{align}
and, for any $f\in JN_{(p',q',s)_\alpha}^{\mathrm{con}}(\rn)$,
$\widetilde{I}_{\beta}(f)
\in JN_{(p',q',s)_{\alpha+\beta/n}}^{\mathrm{con}}(\rn)$ and
\begin{align}\label{Ia-c2}
\lf\langle I_\beta (g),f\r\rangle
=\lf\langle g,\widetilde{I}_\beta (f)\r\rangle,
\end{align}
where $\widetilde{I}_{\beta}$ is as in Definition \ref{I-w} and Remark \ref{Rem-Iw}
with kernel $\widetilde{k}_{\beta}$ being the adjoint kernel
of $k_{\beta}$ as in \eqref{k-w};

\item[\rm (ii)]
$I_{\beta}$ has the vanishing moments up to order $s$
as in Definition \ref{Def-I-s}.
\end{enumerate}
\end{thm}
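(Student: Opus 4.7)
The plan is to reduce the theorem to a direct application of the operator boundedness criterion, Lemma \ref{Bounded-HK-A}, feeding it with the molecular estimate of Lemma \ref{Lem-I-HK}, the pairing identity of Lemma \ref{Ia-m}, and the $JN$-boundedness of $\widetilde I_\beta$ supplied by Corollary \ref{Iw-JN-bound'}.

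For the direction (ii) $\Rightarrow$ (i), I would invoke Lemma \ref{Bounded-HK-A} with $p_1=p_2=p$, $q_1=q_2=q$, $s_1=s_2=s$, $\alpha_1=\alpha+\beta/n$, $\alpha_2=\alpha$, $A:=I_\beta$, and $\widetilde A:=\widetilde I_\beta$. The first step is the selection of some $\epsilon\in(0,1)$ compatible simultaneously with the hypothesis of Lemma \ref{Lem-I-HK} and with condition (i) of Lemma \ref{Bounded-HK-A}; setting $\eta:=\alpha-(\frac1q-\frac1p)>0$, this amounts to choosing $\epsilon$ in the interval $[\eta/(\frac1{q'}+\frac{s+\dz-\beta}{n}),\,\min\{1,\eta/(\frac1{q'}+\frac{s}{n})\})$, and the strict inequality $\beta<\dz$ combined with the hypothesis $\alpha<\frac{s+\dz-\beta}{n}$ (and $p>1$) shows that this interval is nonempty. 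With such an $\epsilon$ fixed, Lemma \ref{Lem-I-HK} asserts that $I_\beta(a)/C$ is a $(p,q,s,\alpha,\epsilon)$-molecule for every $(p,q,s)_{\alpha+\beta/n}$-atom $a$, which together with Lemma \ref{M-JN1} yields \eqref{Ia-C} and verifies condition (i) of Lemma \ref{Bounded-HK-A}. Lemma \ref{Ia-m} supplies the pairing $\langle I_\beta(a),f\rangle=\langle a,\widetilde I_\beta(f)\rangle$, which is condition (ii). Finally, Corollary \ref{Iw-JN-bound'} (applied with $p,q$ replaced by $p',q'$; admissible since $\alpha<\frac{s+\dz-\beta}{n}$) gives the boundedness of $\widetilde I_\beta$ from $JN_{(p',q',s)_\alpha}^{\mathrm{con}}(\rn)$ to $JN_{(p',q',s)_{\alpha+\beta/n}}^{\mathrm{con}}(\rn)$. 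Lemma \ref{Bounded-HK-A} then yields the desired continuous extension of $I_\beta$ satisfying \eqref{Ia-c1} and \eqref{Ia-c2}, while the membership $\widetilde I_\beta(f)\in JN_{(p',q',s)_{\alpha+\beta/n}}^{\mathrm{con}}(\rn)$ asserted in (i) is built into the $JN$-boundedness just established.

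For the reverse implication (i) $\Rightarrow$ (ii), I will exploit the duality $(HK_{(p,q,s)_{\alpha+\beta/n}}^{\mathrm{con}}(\rn))^*=JN_{(p',q',s)_{\alpha+\beta/n}}^{\mathrm{con}}(\rn)$ provided by Lemma \ref{t3.9}. Indeed, given any $f\in JN_{(p',q',s)_\alpha}^{\mathrm{con}}(\rn)$, the identity \eqref{Ia-c2} together with the norm estimate \eqref{Ia-c1} and the $HK$--$JN$ duality at level $\alpha$ give, for every $g\in HK_{(p,q,s)_{\alpha+\beta/n}}^{\mathrm{con}}(\rn)$,
\begin{align*}
\lf|\lf\langle g,\widetilde I_\beta(f)\r\rangle\r|
=\lf|\lf\langle I_\beta(g),f\r\rangle\r|
\le C\|g\|_{HK_{(p,q,s)_{\alpha+\beta/n}}^{\mathrm{con}}(\rn)}
\|f\|_{JN_{(p',q',s)_\alpha}^{\mathrm{con}}(\rn)}.
\end{align*}
Lemma \ref{t3.9} (in the norm-equivalence form) then shows $\widetilde I_\beta(f)\in JN_{(p',q',s)_{\alpha+\beta/n}}^{\mathrm{con}}(\rn)$ with $\|\widetilde I_\beta(f)\|_{JN_{(p',q',s)_{\alpha+\beta/n}}^{\mathrm{con}}(\rn)}\lesssim\|f\|_{JN_{(p',q',s)_\alpha}^{\mathrm{con}}(\rn)}$; that is, $\widetilde I_\beta$ is bounded from $JN_{(p',q',s)_\alpha}^{\mathrm{con}}(\rn)$ to $JN_{(p',q',s)_{\alpha+\beta/n}}^{\mathrm{con}}(\rn)$. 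The necessity direction of Corollary \ref{Iw-JN-bound'} (with $p,q$ replaced by $p',q'$) then delivers that $I_\beta$ has the vanishing moments up to order $s$.

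The hard part of the argument is really hidden in the preceding lemmas, so the main obstacle here is purely bookkeeping: verifying that the quantitative conditions imposed by Lemmas \ref{Lem-I-HK}, \ref{Ia-m}, and \ref{Bounded-HK-A} can all be met simultaneously inside the parameter range $\frac1q-\frac1p<\alpha<\frac{s+\dz-\beta}{n}$. The one nontrivial point is the existence of a workable $\epsilon\in(0,1)$ in the molecule/adjoint compatibility, which requires the slack created by $\beta<\dz$ and $p>1$; once $\epsilon$ is chosen, the remaining work is the mechanical assembly of previously established estimates with the $HK$--$JN$ duality.
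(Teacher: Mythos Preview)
Your proposal is correct and follows essentially the same route as the paper: for (ii)$\Rightarrow$(i) you feed Lemma~\ref{Bounded-HK-A} with Lemmas~\ref{Lem-I-HK}, \ref{M-JN1}, \ref{Ia-m} and Corollary~\ref{Iw-JN-bound'}, and for (i)$\Rightarrow$(ii) you dualize via Lemma~\ref{t3.9} to obtain the $JN$-boundedness of $\widetilde I_\beta$ and then invoke the necessity direction of Corollary~\ref{Iw-JN-bound'}. Your explicit discussion of the admissible $\epsilon$-interval is a nice addition; the paper simply asserts that such an $\epsilon$ exists.
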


\begin{proof}
Let $p$, $q$, $p'$, $q'$, $s$, $\dz$, $\beta$, $\alpha$, $k_\beta$,
$I_{\beta}$, $\widetilde{k}_{\beta}$,
and $\widetilde{I}_{\beta}$ be as in the present lemma.
We first prove that (i) $\Rightarrow$ (ii).
From Lemma \ref{t3.9}, \eqref{Ia-c2}, and \eqref{Ia-c1},
we deduce that, for any
$f\in JN_{(p',q',s)_\alpha}^{\mathrm{con}}(\rn)$,
\begin{align*}
&\lf\|\widetilde{I}_{\beta}(f)
\r\|_{JN_{(p',q',s)_{\alpha+\beta/n}}^{\mathrm{con}}(\rn)}\\
&\quad\sim\lf\|\mathcal{L}_{\widetilde{I}_{\beta}(f)}
\r\|_{(HK_{(p,q,s)_{\alpha+\beta/n}}^{\mathrm{con}}(\rn))^*}
\sim\sup_{\|g\|_{HK_{(p,q,s)_{\alpha+\beta/n}}^{\mathrm{con}}(\rn)}=1}
\lf|\lf\langle \mathcal{L}_{\widetilde{I}_{\beta}(f)},g\r\rangle\r|\\
&\quad\sim\sup_{\|g\|_{HK_{(p,q,s)_{\alpha+\beta/n}}^{\mathrm{con}}(\rn)}=1}
\lf|\lf\langle g, \widetilde{I}_{\beta}(f)\r\rangle\r|
\sim\sup_{\|g\|_{HK_{(p,q,s)_{\alpha+\beta/n}}^{\mathrm{con}}(\rn)}=1}
\lf|\langle I_{\beta}(g), f\rangle\r|\\
&\quad\ls\sup_{\|g\|_{HK_{(p,q,s)_{\alpha+\beta/n}}^{\mathrm{con}}(\rn)}=1}
\|I_{\beta}(g)\|_{HK_{(p,q,s)_\alpha}^{\mathrm{con}}(\rn)}
\|f\|_{JN_{(p',q',s)_\alpha}^{\mathrm{con}}(\rn)}\\
&\quad\ls\|f\|_{JN_{(p',q',s)_\alpha}^{\mathrm{con}}(\rn)},
\end{align*}
where $\mathcal{L}_{\widetilde{I}_{\beta}(f)}$ is as in Lemma
\ref{t3.9} with $f$ replaced by $\widetilde{I}_{\beta}(f)$.
This shows that $\widetilde{I}_{\beta}$ is bounded from
$JN_{(p',q',s)_\alpha}^{\mathrm{con}}(\rn)$ to
$JN_{(p',q',s)_{\alpha+\beta/n}}^{\mathrm{con}}(\rn)$,
which, combined with Corollary \ref{Iw-JN-bound''},
further implies that $I_{\beta}$ has the vanishing moments up to order $s$.
This finishes the proof that (i) $\Rightarrow$ (ii).

Next, we show that (ii) $\Rightarrow$ (i).	
Since $\frac{1}{q}-\frac{1}{p}<\alpha<\frac{s+\dz-\beta}{n}$,
it follows that there exists an $\epsilon\in(0,1)$ such that
$$\frac{1}{\epsilon}
\left(\frac{1}{q}-\frac{1}{p}-\alpha\r)+\frac{1}{q'}+\frac{s}{n}<0\quad\mathrm{and}\quad
-\frac{1}{q'}-\frac{s+\dz-\beta}{n}\leq \frac{1}{\epsilon}\lf(\frac{1}{q}
-\frac{1}{p}-\alpha\r).$$
By this and Lemmas \ref{M-JN1} and
\ref{Lem-I-HK}, we conclude that there exists a positive constant
$C$ such that, for any
$(p,q,s)_{\alpha+\beta/n}$-atom $a$,
$I_\beta (a)/C$ is a $(p,q,s,\alpha,\epsilon)$-molecule and
$$\lf\|I_\beta (a)\r\|_{HK_{(p,q,s)_\alpha}^{\mathrm{con}}(\rn)}\leq C.$$
This implies that \eqref{Ia-C} holds true.
Moreover, from (ii) of the present theorem, and Corollary \ref{Iw-JN-bound'},
we deduce that $\widetilde{I}_{\beta}$ is bounded from
$JN_{(p',q',s)_\alpha}^{\mathrm{con}}(\rn)$ to
$JN_{(p',q',s)_{\alpha+\beta/n}}^{\mathrm{con}}(\rn)$
and hence, for any $f\in JN_{(p',q',s)_\alpha}^{\mathrm{con}}(\rn)$,
$$\widetilde{I}_{\beta}(f)\in
JN_{(p',q',s)_{\alpha+\beta/n}}^{\mathrm{con}}(\rn).$$
Furthermore, by the boundedness of $\widetilde{I}_{\beta}$
from $JN_{(p',q',s)_\alpha}^{\mathrm{con}}(\rn)$ to
$JN_{(p',q',s)_{\alpha+\beta/n}}^{\mathrm{con}}(\rn)$
and Lemmas \ref{Lem-I-HK} and \ref{Ia-m},
we conclude that (i) and (ii) of Lemma \ref{Bounded-HK-A} hold true
with $A:=I_{\beta}$, $\widetilde{A}:=\widetilde{I}_{\beta}$,
$p_1:=p=:p_2$, $q_1:=q=:q_2$, $s_1:=s=:s_2$,
$\alpha_1:=\alpha+\frac{\beta}{n}$, and
$\alpha_2:=\alpha$, which further implies that
\eqref{Ia-c1} and \eqref{Ia-c2} hold true.
This finishes the proof of that (ii) $\Rightarrow$ (i)
and hence of Theorem \ref{I-Bounded-HK}.
\end{proof}

The following conclusion is a counterpart of Corollary \ref{Iw-JN-bound''}.

\begin{thm}\label{I-Bounded-HK'}
Let $\dz\in(0,1]$, $\beta\in(0,\dz)$, $p\in (1,\frac{n}{\beta})$, $q\in(1,\fz)$,
$\frac{1}{p}+\frac{1}{p'}=1=\frac{1}{q}+\frac{1}{q'}$,
$\frac{1}{\widetilde{p}}:=\frac{1}{p}-\frac{\beta}{n}$,
$s\in\zz_+$, and $\alpha\in\rr$
satisfy $\frac{1}{q}-\frac{1}{\widetilde{p}}<\alpha<\frac{s+\dz-\beta}{n}$,
$k_{\beta}$ be an $s$-order fractional kernel with regularity $\dz$,
and $I_\beta$ the fractional integral
having the vanishing moments
up to order $s$ with kernel $k_{\beta}$.
Then the following two statements are equivalent:
\begin{enumerate}
\item[\rm (i)]
there exists a positive constant $C$ such that,
for any $(p,q,s)_{\alpha}$-atom $a$,
\begin{align}\label{Ia-C'}
\lf\|I_\beta (a)\r\|_{HK_{(\widetilde{p},q,s)_\alpha}^{\mathrm{con}}(\rn)}\leq C.
\end{align}
Moreover, $I_\beta$ can be extended to a unique continuous
linear operator, still denoted by $I_\beta$,
from $HK_{(p,q,s)_{\alpha}}^{\mathrm{con}}(\rn)$
to $HK_{(\widetilde{p},q,s)_\alpha}^{\mathrm{con}}(\rn)$,
namely, there exists a positive constant
$C$ such that, for any
$g\in HK_{(p,q,s)_{\alpha}}^{\mathrm{con}}(\rn)$,
\begin{align}\label{Ia-c1'}
\lf\|I_\beta (g)\r\|_{HK_{(\widetilde{p},q,s)_{\alpha}}^{\mathrm{con}}(\rn)}
\leq C\|g\|_{HK_{(p,q,s)_{\alpha}}^{\mathrm{con}}(\rn)}
\end{align}
and, for any $f\in JN_{(\widetilde{p}',q',s)_\alpha}^{\mathrm{con}}(\rn)$,
$\widetilde{I}_{\beta}(f)
\in JN_{(p',q',s)_{\alpha}}^{\mathrm{con}}(\rn)$ and
\begin{align}\label{Ia-c2'}
\lf\langle I_\beta (g),f\r\rangle
=\lf\langle g,\widetilde{I}_\beta (f)\r\rangle,
\end{align}
where $\frac{1}{\widetilde{p}}+\frac{1}{\widetilde{p}'}=1$ and
$\widetilde{I}_{\beta}$ is as in Definition \ref{I-w} and Remark \ref{Rem-Iw}
with kernel $\widetilde{k}_{\beta}$ being the adjoint kernel of $k_{\beta}$ as in \eqref{k-w};

\item[\rm (ii)]
$I_{\beta}$ has the vanishing moments up to order $s$
as in Definition \ref{Def-I-s}.
\end{enumerate}
\end{thm}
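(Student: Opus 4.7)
The plan is to mirror the proof of Theorem \ref{I-Bounded-HK}, but with the parameter setup adjusted so that the source and target Hardy-type spaces differ in their $p$ index rather than their $\alpha$ index. The technical inputs will be the boundedness criterion in Lemma \ref{Bounded-HK-A}, the molecule/atom size estimates behind Lemma \ref{Lem-I-HK}, the adjoint identity from Lemma \ref{Ia-m}, the $JN$-to-$JN$ boundedness in Corollary \ref{Iw-JN-bound''}, and the $HK$–$JN$ duality from Lemma \ref{t3.9}.

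For (i) $\Rightarrow$ (ii), I would first transport the assumed boundedness of $I_\beta$ from $HK_{(p,q,s)_{\alpha}}^{\mathrm{con}}(\rn)$ to $HK_{(\widetilde{p},q,s)_{\alpha}}^{\mathrm{con}}(\rn)$ to the predual side. Using $(HK_{(r,q,s)_\alpha}^{\mathrm{con}}(\rn))^*=JN_{(r',q',s)_\alpha}^{\mathrm{con}}(\rn)$ from Lemma \ref{t3.9}, together with the pairing \eqref{Ia-c2'}, one reads off, exactly as in the proof of Theorem \ref{I-Bounded-HK}, that $\widetilde{I}_\beta$ is bounded from $JN_{(\widetilde{p}',q',s)_{\alpha}}^{\mathrm{con}}(\rn)$ to $JN_{(p',q',s)_{\alpha}}^{\mathrm{con}}(\rn)$. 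Since $\frac{1}{p'}=\frac{1}{\widetilde p'}-\frac{\beta}{n}$, this is exactly the type of boundedness governed by Corollary \ref{Iw-JN-bound''}, now applied with its $p$, $\widetilde p$, $q$ being $\widetilde p'$, $p'$, $q'$; the necessity half of that corollary then forces $I_\beta$ to have the vanishing moments up to order $s$.

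For (ii) $\Rightarrow$ (i), I would invoke Lemma \ref{Bounded-HK-A} with $p_1:=p$, $p_2:=\widetilde p$, $q_1:=q_2:=q$, $s_1:=s_2:=s$, $\alpha_1:=\alpha_2:=\alpha$, $A:=I_\beta$, and $\widetilde A:=\widetilde I_\beta$. To verify hypothesis (i) there, I would repeat the argument of Lemma \ref{Lem-I-HK}: for a $(p,q,s)_\alpha$-atom $a$ supported in $Q_z(r)$, the pointwise bound $|I_\beta(a)|\ls r^\beta\mathcal M(a)$ on $Q_z(2\sqrt n r)$ combined with the $L^q$-boundedness of $\mathcal M$ and the atomic size condition $\|a\|_{L^q}\le|Q_z(r)|^{1/q-1/p-\alpha}$ produces exactly the bound $|Q_z(R_0)|^{1/q-1/\widetilde p-\alpha}$ demanded by Definition \ref{Def-mole}(i), because $\beta/n=1/p-1/\widetilde p$; the far-field moment-cancellation estimate is unchanged, so the shift in $1/p$ is absorbed entirely by the order $\beta$ of the fractional integral. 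Hypothesis (ii) of Lemma \ref{Bounded-HK-A} is the identity $\lf\langle I_\beta(a),f\r\rangle=\lf\langle a,\widetilde I_\beta(f)\r\rangle$, whose proof is essentially that of Lemma \ref{Ia-m} after checking that the standing restriction $\alpha<\frac{s+\dz-\beta}{n}$ supplies the same integrability ingredients as $\alpha+\frac{\beta}{n}<1-\frac{1}{p}+\frac{s+\dz-\beta}{n}$ did there. The required boundedness of $\widetilde I_\beta$ between the $JN$-spaces follows from the sufficiency direction of Corollary \ref{Iw-JN-bound''}.

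The main obstacle is the simultaneous choice of the molecular decay exponent $\epsilon\in(0,1)$. I need both $\frac{1}{\epsilon}(\frac{1}{q}-\frac{1}{\widetilde p}-\alpha)+\frac{1}{q'}+\frac{s}{n}<0$ (from Lemma \ref{Bounded-HK-A}(i)) and $-\frac{1}{q'}-\frac{s+\dz-\beta}{n}\le\frac{1}{\epsilon}(\frac{1}{q}-\frac{1}{\widetilde p}-\alpha)$ (from the molecular decay computation) to hold. The range $\frac{1}{q}-\frac{1}{\widetilde p}<\alpha<\frac{s+\dz-\beta}{n}$ makes the quantity $\frac{1}{q}-\frac{1}{\widetilde p}-\alpha$ strictly negative, and the hypothesis $\beta<\dz$ separates $-\frac{s+\dz-\beta}{n}$ from $-\frac{s}{n}$, so that these two inequalities cut out a nonempty window of admissible $\epsilon$. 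This is the one place in the argument where every parameter constraint of the theorem is actually used.
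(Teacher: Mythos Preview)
Your proposal is correct and follows essentially the same route as the paper. The only streamlining you miss is that the paper observes directly that any $(p,q,s)_\alpha$-atom is automatically a $(\widetilde p,q,s)_{\alpha+\beta/n}$-atom, since $\frac{1}{q}-\frac{1}{p}-\alpha=\frac{1}{q}-\frac{1}{\widetilde p}-\bigl(\alpha+\frac{\beta}{n}\bigr)$; this lets the paper invoke Lemmas \ref{Lem-I-HK} and \ref{Ia-m} verbatim (with $p$ replaced by $\widetilde p$) rather than re-running their proofs, but the underlying computations are exactly the ones you outline.
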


\begin{proof}
Let $\dz$, $\beta$, $p$, $q$, $p'$, $q'$, $s$,
$\widetilde{p}$, $\widetilde{p}'$, $\alpha$, $k_\beta$,
$I_{\beta}$, $\widetilde{k}_{\beta}$,
and $\widetilde{I}_{\beta}$ be as in the present lemma.
We first prove that (i) $\Rightarrow$ (ii).
For any $f\in JN_{(\widetilde{p}',q',s)_\alpha}^{\mathrm{con}}(\rn)$,
let $\mathcal{L}_{\widetilde{I}_{\beta}(f)}$ be as in Lemma
\ref{t3.9} with $f$ replaced by $\widetilde{I}_{\beta}(f)$.
It is easy to show that $\widetilde{p}'\in(1,\frac{n}{\beta})$
and $\frac{1}{p'}=\frac{1}{\widetilde{p}'}-\frac{\beta}{n}$.
Using this, Lemma \ref{t3.9}, \eqref{Ia-c2'}, and \eqref{Ia-c1'},
we conclude that, for any
$f\in JN_{(\widetilde{p}',q',s)_\alpha}^{\mathrm{con}}(\rn)$,
\begin{align*}
\lf\|\widetilde{I}_{\beta}(f)
\r\|_{JN_{(p',q',s)_{\alpha}}^{\mathrm{con}}(\rn)}
&\sim\lf\|\mathcal{L}_{\widetilde{I}_{\beta}(f)}
\r\|_{(HK_{(p,q,s)_{\alpha}}^{\mathrm{con}}(\rn))^*}
\sim\sup_{\|g\|_{HK_{(p,q,s)_{\alpha}}^{\mathrm{con}}(\rn)}=1}
\lf|\lf\langle \mathcal{L}_{\widetilde{I}_{\beta}(f)},g\r\rangle\r|\\
&\sim\sup_{\|g\|_{HK_{(p,q,s)_{\alpha}}^{\mathrm{con}}(\rn)}=1}
\lf|\lf\langle g, \widetilde{I}_{\beta}(f)\r\rangle\r|
\sim\sup_{\|g\|_{HK_{(p,q,s)_{\alpha}}^{\mathrm{con}}(\rn)}=1}
\lf|\langle I_{\beta}(g), f\rangle\r|\\
&\ls\sup_{\|g\|_{HK_{(p,q,s)_{\alpha}}^{\mathrm{con}}(\rn)}=1}
\|I_{\beta}(g)\|_{HK_{(\widetilde{p},q,s)_\alpha}^{\mathrm{con}}(\rn)}
\|f\|_{JN_{(\widetilde{p}',q',s)_\alpha}^{\mathrm{con}}(\rn)}\\
&\ls\|f\|_{JN_{(\widetilde{p}',q',s)_\alpha}^{\mathrm{con}}(\rn)}.
\end{align*}
This shows that $\widetilde{I}_{\beta}$ is bounded from
$JN_{(\widetilde{p}',q',s)_\alpha}^{\mathrm{con}}(\rn)$ to
$JN_{(p',q',s)_{\alpha}}^{\mathrm{con}}(\rn)$,
which, together with Corollary \ref{Iw-JN-bound''},
further implies that $I_{\beta}$ has the vanishing moments up to order $s$.
This finishes the proof of that (i) $\Rightarrow$ (ii).

Now, we show that (ii) $\Rightarrow$ (i).
Let $a$ be a $(p,q,s)_{\alpha}$-atom.
Since $\frac{1}{q}-\frac{1}{\widetilde{p}}<\alpha<\frac{s+\dz-\beta}{n}$,
it follows that there exists an $\epsilon\in(0,1)$ such that
$$\frac{1}{\epsilon}
\lf(\frac{1}{q}-\frac{1}{\widetilde{p}}-\alpha\r)+\frac{1}{q'}+\frac{s}{n}<0
\quad\text{and}\quad-\frac{1}{q'}-\frac{s+\dz-\beta}{n}
\leq \frac{1}{\epsilon}\lf(\frac{1}{q}-\frac{1}{\widetilde{p}}-\alpha\r).$$
By this, the fact that $a$ is also a $(\widetilde{p},q,s)_{\alpha+\beta/n}$-atom
and Lemmas \ref{M-JN1} and \ref{Lem-I-HK},
we conclude that there exists a positive constant
$C$ such that $I_\beta (a)/C$ is a $(\widetilde{p},q,s,\alpha,\epsilon)$-molecule and
$$\lf\|I_\beta (a)\r\|_{HK_{(\widetilde{p},q,s)_\alpha}^{\mathrm{con}}(\rn)}\leq C.$$
This implies that \eqref{Ia-C'} holds true.
Moreover, using the facts $\widetilde{p}'\in(1,\frac{n}{\beta})$
and $\frac{1}{p'}=\frac{1}{\widetilde{p}'}-\frac{\beta}{n}$,
(ii) of the present theorem, and Corollary \ref{Iw-JN-bound''},
we conclude that $\widetilde{I}_{\beta}$ is bounded from
$JN_{(\widetilde{p}',q',s)_\alpha}^{\mathrm{con}}(\rn)$ to
$JN_{(p',q',s)_{\alpha}}^{\mathrm{con}}(\rn)$
and hence, for any $f\in JN_{(\widetilde{p}',q',s)_\alpha}^{\mathrm{con}}(\rn)$,
$$\widetilde{I}_{\beta}(f)\in
JN_{(p',q',s)_{\alpha}}^{\mathrm{con}}(\rn).$$
Furthermore, by the boundedness of $\widetilde{I}_{\beta}$
from $JN_{(\widetilde{p}',q',s)_\alpha}^{\mathrm{con}}(\rn)$ to
$JN_{(p',q',s)_{\alpha}}^{\mathrm{con}}(\rn)$
and Lemmas \ref{Lem-I-HK} and \ref{Ia-m},
we find that (i) and (ii) of Lemma \ref{Bounded-HK-A} hold true
with $A:=I_{\beta}$, $\widetilde{A}:=\widetilde{I}_{\beta}$,
$p_1:=p$, $p_2:=\widetilde{p}$, $q_1:=q=:q_2$, $s_1:=s=:s_2$,
$\alpha_1:=\alpha=:\alpha_2$, which further implies that
\eqref{Ia-c1'} and \eqref{Ia-c2'} hold true.
This finishes the proof of that (ii) $\Rightarrow$ (i)
and hence of Theorem \ref{I-Bounded-HK'}.
\end{proof}

\begin{rem}\label{4.25}
\begin{enumerate}
\item[\rm (i)] 	
Theorem \ref{I-Bounded-HK} implies that
the adjoint operator of the
fractional integral $I_\beta$ on
$HK_{(p,q,s)_{\alpha+\beta/n}}^{\mathrm{con}}(\rn)$
is just $\widetilde{I}_\beta$.

\item[\rm (ii)]
In Theorem \ref{I-Bounded-HK},
$\frac{1}{q}-\frac{1}{p}<\alpha<\frac{s+\delta-\beta}{n}$
is a suitable assumption
because we need $\az\in(\frac{1}{q}-\frac{1}{p},\fz)$
in Lemma \ref{M-JN1} which shows that
the molecule in Definition \ref{Def-mole} belongs to $HK_{(p,q,s)_\alpha}^{\rm{con}}(\rn)$,
and also need $\alpha\in(-\fz,\frac{s+\delta-\beta}{n})$
in Corollary \ref{Iw-JN-bound'}
which proves that $\widetilde{I}_{\beta}$ is bounded on
the congruent $\mathrm{JNC}$ space.
It is still unknown whether or not
Theorem \ref{I-Bounded-HK} holds true
with $\alpha\in (-\fz,\frac{1}{q}-\frac{1}{p}]$.

\item[\rm (iii)]
Let $p\in[1,\fz)$, $q_1=1$ and $q_2\in[1,\frac{n}{n-\beta})$,
or $q_1\in(1,\frac{n}{\beta})$ and $q_2\in[1,\frac{nq_1}{n-\beta q_1}]$,
or $q_1\in[\frac{n}{\beta},\fz)$ and $q_2\in [1,\fz)$,
$s\in\zz_+$, and $\alpha\in (-\fz,\frac{s+\dz-\beta}{n})$.
As a counterpart of Theorem \ref{Iw-JN-bound},
it is interesting to ask whether or not $I_{\beta}$
is bounded from $HK_{(p',q_2',s)_{\alpha+\beta/n}}^{\mathrm{con}}(\rn)$
to $HK_{(p',q_1',s)_{\alpha}}^{\mathrm{con}}(\rn)$,
which is a more general result than Theorem 3.31.
However, the method of this article can not give an affirmative answer
to this question because it strongly depends
on the technique of molecules and
it is still \textit{unclear} whether or not,
for any $(p',q_2',s)_\alpha$-atom $a$,
$I_{\beta}(a)$ is a $(p',q_1',s,\alpha,\epsilon)$-molecule
for some $\ez\in(0,1)$.
\end{enumerate}	
\end{rem}

\noindent \\[4mm]

\noindent\bf{\footnotesize Acknowledgements}\quad\rm
{\footnotesize
This project is partially supported by the National Natural Science Foundation of China
(Grant Nos.\
11971058, 12071197, 12122102 and 11871100) and the National
Key Research and Development Program of China
(Grant No.\ 2020YFA0712900).}\\[4mm]

\noindent{\bbb{References}}
\begin{enumerate}
{\footnotesize
	
\bibitem{ABKY}
Aalto D, Berkovits L, Kansanen O E, Yue H.
John--Nirenberg lemmas for a doubling measure.
Studia Math, 2011, 204: 21--37\\[-6.5mm]

\bibitem{an2019}
Arai R, Nakai E.
Compact commutators of Calder\'on--Zygmund
and generalized fractional integral operators
with a function in generalized Campanato
spaces on generalized Morrey spaces.
Tokyo J Math, 2019, 42: 471--496\\[-6.5mm]

\bibitem{ans2021}
Arai R, Nakai E, Sawano Y.
Generalized fractional integral operators on Orlicz--Hardy spaces.
Math Nachr, 2021, 294: 224--235\\[-6.5mm]

\bibitem{bkm2016}
Berkovits L, Kinnunen J, Martell J M.
Oscillation estimates, self-improving results and good-$\lambda$ inequalities.
J Funct Anal, 2016, 270: 3559--3590\\[-6.5mm]

\bibitem{Bo2003}
Bownik M.
Anisotropic Hardy spaces and wavelets.
Mem Amer Math Soc, 2003, 164(781): 122 pp\\[-6.5mm]

\bibitem{C}
Campanato S.
Propriet\`{a} di una famiglia di spazi funzionali.
Ann Scuola Norm Sup Pisa Cl Sci (3), 1964, 18: 137--160\\[-6.5mm]

\bibitem{cs2021}
Chen T, Sun W.
Extension of multilinear fractional integral operators
to linear operators on mixed-norm Lebesgue spaces.
Math Ann, 2021, 379: 1089--1172\\[-6.5mm]

\bibitem{DHKY}
Dafni G, Hyt\"onen T, Korte R, Yue H.
The space $JN_p$: nontriviality and duality.
J Funct Anal, 2018, 275: 577--603\\[-6.5mm]

\bibitem{dll2003}
Ding Y, Lee M Y, Lin C C.
Fractional integrals on weighted Hardy spaces.
J Math Anal Appl, 2003, 282: 356--368\\[-6.5mm]

\bibitem{DM}
Dom\'inguez \'O, Milman M. Sparse Brudnyi and John--Nirenberg spaces.
arXiv: 2107.05117\\[-6.5mm]

\bibitem{Duo01}
Duoandikoetxea J. Fourier Analysis. Graduate Studies
in Mathematics, Vol 29. Providence: American
Mathematical Society, 2001\\[-6.5mm]

\bibitem{JA2005}
Garc\'ia-Cuerva J, Gatto A E.
Boundedness properties of fractional integral
operators associated to non-doubling measures.
Studia Math, 2004, 162: 245--261\\[-6.5mm]

\bibitem{hj2015}
Hao Z, Jiao Y.
Fractional integral on martingale Hardy
spaces with variable exponents.
Fract Calc Appl Anal, 2015, 18: 1128--1145\\[-6.5mm]

\bibitem{hl1928}
Hardy G H, Littlewood J E.
Some properties of fractional integrals. I.
Math Z, 1928, 27: 565--606\\[-6.5mm]

\bibitem{Ho2020}
Ho K P.
Erd\'elyi--Kober fractional integrals on Hardy space and BMO.
Proyecciones, 2020, 39: 663--677\\[-6.5mm]

\bibitem{Ho2021}
Ho K P.
Erd\'elyi--Kober fractional integral operators
on ball Banach function spaces.
Rend Semin Mat Univ Padova, 2021, 145: 93--106\\[-6.5mm]

\bibitem{jtyyz1}
Jia H, Tao J, Yang D, Yuan W, Zhang Y.
Special John--Nirenberg--Campanato spaces via congruent cubes.
Sci. China Math, 2021, https://doi.org/10.1007/s11425-021-1866-4\\[-6.5mm]

\bibitem{jtyyz2}
Jia H, Tao J, Yang D, Yuan W, Zhang Y.
Boundedness of Calder\'on--Zygmund operators
on special John--Nirenberg--Campanato and
Hardy-type spaces via congruent cubes.
Anal Math Phys, 2022, 12: Paper No. 15, 56 pp\\[-6.5mm]

\bibitem{jyyz2}
Jia H, Yang D, Yuan W, Zhang Y.
Estimates for Littlewood--Paley operators on special
John--Nirenberg--Campanato spaces via congruent cubes.
Submitted.\\[-6.5mm]

\bibitem{jzwh2017}
Jiao Y, Zhou D, Weisz F, Hao Z.
Corrigendum: Fractional integral on martingale
Hardy spaces with variable exponents.
Fract Calc Appl Anal, 2017, 20: 1051--1052\\[-6.5mm]

\bibitem{JN}
John F, Nirenberg L.
On functions of bounded mean oscillation.
Comm Pure Appl Math, 1961, 14: 415--426\\[-6.5mm]

\bibitem{lx2020}
Liu L, Xiao J.
Morrey's fractional integrals in Campanato--Sobolev's
space and $\mathrm{div} F=f$.
J Math Pures Appl (9), 2020, 142: 23--57\\[-6.5mm]

\bibitem{L}
Lu S.
Four Lectures on Real $H^p$ Spaces.
River Edge: World Scientific Publishing Co, 1995\\[-6.5mm]

\bibitem{mv1995}
Maz'ya V G, Verbitsky I E.
Capacitary inequalities for fractional integrals,
with applications to partial differential
equations and Sobolev multipliers.
Ark Mat, 1995, 33: 81--115\\[-6.5mm]

\bibitem{M}
Milman M. Marcinkiewicz spaces,
Garsia--Rodemich spaces and the scale of
John--Nirenberg self improving inequalities.
Ann Acad Sci Fenn Math, 2016, 41: 491--501\\[-6.5mm]

\bibitem{MM}
Milman M. Garsia--Rodemich spaces: Bourgain--Brezis--Mironescu space,
embeddings and rearrangement invariant spaces.
J Anal Math, 2019, 139: 121--141\\[-6.5mm]

\bibitem{mx2017}
Mo H, Xue H.
Boundedness of commutators generated by fractional
integral operators with variable kernel and
local Campanato functions on generalized local Morrey spaces.
Adv Math (China), 2017, 46: 755--764\\[-6.5mm]

\bibitem{N2001}
Nakai E.
On generalized fractional integrals.
Taiwanese J Math, 2001, 5: 587--602\\[-6.5mm]

\bibitem{N10}
Nakai E.
Singular and fractional integral operators on Campanato
spaces with variable growth conditions.
Rev Mat Complut, 2010, 23: 355--381\\[-6.5mm]

\bibitem{N17}
Nakai E.
Singular and fractional integral operators on preduals of
Campanato spaces with variable growth condition.
Sci China Math, 2017, 60: 2219--2240\\[-6.5mm]

\bibitem{NS2012}
Nakai E, Sadasue G.
Martingale Morrey--Campanato spaces and fractional integrals.
J Funct Spaces Appl, 2012, Art. ID 673929, 29 pp\\[-6.5mm]

\bibitem{Po1997}
Podlubny I.
Riesz potential and Riemann--Liouville
fractional integrals and derivatives of Jacobi polynomials.
Appl Math Lett, 1997, 10: 103--108\\[-6.5mm]

\bibitem{R1996}
Rubin B. Fractional Integrals and Potentials.
Pitman Monographs and Surveys in Pure and Applied
Mathematics, Vol 82. Harlow: Longman, 1996\\[-6.5mm]

\bibitem{ss2017}
Sawano Y, Shimomura T.
Boundedness of the generalized fractional
integral operators on generalized Morrey spaces
over metric measure spaces.
Z Anal Anwend, 2017, 36: 159--190\\[-6.5mm]

\bibitem{sw1992}
Sawyer E, Wheeden R L.
Weighted inequalities for fractional integrals
on Euclidean and homogeneous spaces.
Amer J Math, 1992, 114: 813--874\\[-6.5mm]

\bibitem{sl2014}
Shi S, Lu S.
A characterization of Campanato space via
commutator of fractional integral.
J Math Anal Appl, 2014, 419: 123--137\\[-6.5mm]

\bibitem{s1938}
Sobolev S. L. On a theorem in functional analysis,
Mat Sb, 1938, 46: 471--497\\[-6.5mm]

\bibitem{EMS1970}
Stein E M. Singular Integrals and Differentiability Properties of Functions.
Princeton Mathematical Series, Vol 30.
Princeton: Princeton University Press, 1970\\[-6.5mm]

\bibitem{stein2011}
Stein E M, Shakarchi R. Functional Analysis.
Introduction to Further Topics in Analysis, Princeton Lectures in
Analysis, Vol 4. Princeton: Princeton University Press, 2011\\[-6.5mm]

\bibitem{SXY}
Sun J, Xie G, Yang D.
Localized John--Nirenberg--Campanato spaces.
Anal Math Phys, 2021, 11: Paper No. 29, 47 pp\\[-6.5mm]

\bibitem{TW1980}
Taibleson M H, Weiss G. The molecular characterization of certain
Hardy spaces. Ast\'erisque, 1980, 77: 67--149\\[-6.5mm]

\bibitem{TYY19}
Tao J, Yang D, Yuan W.
John--Nirenberg--Campanato spaces.
Nonlinear Anal, 2019, 189: 111584, 1--36\\[-6.5mm]

\bibitem{TYY2}
Tao J, Yang D, Yuan W.
Vanishing John--Nirenberg spaces.
Adv Calc Var, 2021, https://doi.org/
10.1515/acv-2020-0061\\[-6.5mm]

\bibitem{tyy3}
Tao J, Yang D, Yuan W.
A survey on function spaces of John--Nirenberg type.
Mathematics, 2021, 9: 2264, https://doi.org/10.3390/math9182264\\[-6.5mm]

}
\end{enumerate}
\end{document}